\newcommand{\B}{B_n}
\newcommand{\vB}{v \negthinspace B_n}
\newcommand{\fB}{f \negthinspace B_n}
\newcommand{\wB}{w \negthinspace B_n}
\newcommand{\PB}{P \negthinspace B_n}
\newcommand{\pb}{\mathfrak{pb}_n}
\newcommand{\pbq}{\mathfrak{pb}_n^!}
\newcommand{\PwB}{P \negthinspace w \negthinspace B_n}
\newcommand{\PvB}{P \negthinspace v \negthinspace B_n}
\newcommand{\PvBii}{P \negthinspace v \negthinspace B_2}
\newcommand{\PvBiii}{P \negthinspace v \negthinspace B_3}
\newcommand{\pv}{\mathfrak{pvb}_n}
\newcommand{\pvsz}{\mathfrak{pvb}_{n,\sigma}(z)}
\newcommand{\pvs}{\mathfrak{pvb}_{n,\sigma}}
\newcommand{\pvq}{\mathfrak{pvb}_n^!}
\newcommand{\pvqk}{\mathfrak{pvb}_n^{!k}}
\newcommand{\pvqni}{\mathfrak{pvb}_n^{!n-1}}
\newcommand{\pvqsz}{\mathfrak{pvb}_{n,\sigma}^!(z)}
\newcommand{\pvqsk}{\mathfrak{pvb}_{n,\sigma}^{!k}}
\newcommand{\pvqz}{\mathfrak{pvb}_n^!(z)}
\newcommand{\PfB}{P \negthinspace f \negthinspace B_n}
\newcommand{\PfBii}{P \negthinspace f \negthinspace B_2}
\newcommand{\pf}{\mathfrak{pfb}_n}
\newcommand{\pfq}{\mathfrak{pfb}_n^!}
\newcommand{\pfqk}{\mathfrak{pfb}_n^{!k}}
\newcommand{\pfqi}{\mathfrak{pfb}_n^{!i}}
\newcommand{\pfqsz}{\mathfrak{pfb}_{n,\sigma}^!(z)}
\newcommand{\pfsz}{\mathfrak{pfb}_{n,\sigma}(z)}
\newcommand{\paq}{\mathfrak{pab}_n^{!}}
\newcommand{\Hspv}{H^*(\PvB,\Q)}
\newcommand{\Hipv}{H^i(\PvB,\Q)}
\newcommand{\Hkpv}{H^k(\PvB,\Q)}
\newcommand{\Hspf}{H^*(\PfB,\Q)}
\newcommand{\Hipf}{H^i(\PfB,\Q)}
\newcommand{\Hkpf}{H^k(\PfB,\Q)}
\newcommand{\Hipb}{H^i(\PB,\Q)}
\newcommand{\Hkpb}{H^k(\PB,\Q)}
\newcommand{\Hsv}{H^*(\vB,\Q)}
\newcommand{\Hiv}{H^i(\vB,\Q)}
\newcommand{\Hkv}{H^k(\vB,\Q)}
\newcommand{\Hif}{H^i(\fB,\Q)}
\newcommand{\Hkf}{H^k(\fB,\Q)}
\newcommand{\Hkpw}{H^k(\PwB,\Q)}
\newcommand{\Hspw}{H^*(\PwB,\Q)}
\newcommand{\ua}{\underline{a}}
\newcommand{\Ka}{K_{\ua}}
\newcommand{\ch}{\text{ch}}
\newcommand{\ois}{\omega_i^*}
\newcommand{\oi}{\omega_i}
\newcommand{\Rq}{R^\perp}
\newcommand{\Sn}{S_n}
\newcommand{\Q}{\mathbb{Q}}
\newcommand{\Z}{\mathbb{Z}}
\newcommand{\R}{\mathbb{R}}
\newcommand{\N}{\mathbb{N}}
\newcommand{\QG}{\Q G}
\newcommand{\grQG}{gr \Q G}
\newcommand{\lra}{\longrightarrow}
\newcommand{\ra}{\rightarrow}
\newcommand{\hra}{\hookrightarrow}
\newcommand{\thra}{\twoheadrightarrow}
\newcommand{\vij}{v_{ij}}
\newcommand{\vik}{v_{ik}}
\newcommand{\vjk}{v_{jk}}
\newcommand{\uij}{u_{ij}}
\newcommand{\uik}{u_{ik}}
\newcommand{\ujk}{u_{jk}}
\newcommand{\ai}{\alpha_i}
\newcommand{\rij}{r_{ij}}
\newcommand{\rik}{r_{ik}}
\newcommand{\rjk}{r_{jk}}
\newcommand{\rji}{r_{ji}}
\newcommand{\rkj}{r_{kj}}
\newcommand{\rkl}{r_{kl}}
\newcommand{\w}{\wedge}
\newcommand{\dt}{d_{\tau}}
\newcommand{\ttau}{t_{\tau}}
\newcommand{\alt}{\alpha_{\tau}}
\newcommand{\chis}{\chi_{\sigma}}
\newcommand{\chism}{\chi_{\sigma}(m)}
\newcommand{\Gi}{\Gamma_1}
\newcommand{\Gii}{\Gamma_i}
\newcommand{\Gr}{\Gamma_r}
\newcommand{\gi}{\gamma_1}
\newcommand{\gj}{\gamma_j}
\newcommand{\gki}{\gamma_{k_i}}
\newcommand{\mB}{\mathcal{B}}
\newcommand{\As}{A_{\sigma}}
\newcommand{\Aqs}{A_{\sigma}^!}
\newcommand{\Aq}{A^!}
\newcommand{\Xni}{X^n_i}
\newcommand{\Xnii}{X^n_{i+1}}
\newcommand{\Xnio}{X^n_1}
\newcommand{\Xnit}{X^n_2}
\newcommand{\Xnin}{X^n_{n-1}}
\newcommand{\si}{\sigma_i}
\newcommand{\sii}{\sigma_{i+1}}
\newcommand{\sj}{\sigma_j}
\newcommand{\Aij}{A_{ij}}
\newcommand{\aij}{a_{ij}}
\newcommand{\aji}{a_{ji}}
\newcommand{\aik}{a_{ik}}
\newcommand{\ajk}{a_{jk}}
\newcommand{\akl}{a_{kl}}
\newcommand{\ind}{\mathrm{Ind}}
\newcommand{\mS}{\mathcal{S}}
\newcommand{\bmS}{\overline{\mathcal{S}}}
\newcommand{\mSa}{\mathcal{S}_{\alpha}}
\newcommand{\pvqs}{\mathfrak{pvb}_n^{!\mS}}
\newcommand{\pvqsp}{\mathfrak{pvb}_n^{!\mS'}}
\newcommand{\pvqsa}{\mathfrak{pvb}_n^{!\mS_{\alpha}}}
\newcommand{\pvqsat}{\mathfrak{pvb}_n^{!\mS_{\alpha}\otimes}}
\newcommand{\pfqsa}{\mathfrak{pfb}_n^{!\mS_{\alpha}}}
\newcommand{\pfqsat}{\mathfrak{pfb}_n^{!\mS_{\alpha}\otimes}}
\newcommand{\pfqs}{\mathfrak{pfb}_n^{!\mS}}
\newcommand{\stab}{\mathrm{Stab}}
\newcommand{\Sai}{S_{\ai}}
\newcommand{\cln}{c_{\lambda,n}}
\newtheorem{example}{Example}
\newtheorem{proposition}{Proposition}
\newtheorem{theorem}{Theorem}
\newtheorem{lemma}{Lemma}
\newtheorem{corollary}{Corollary}
\newtheorem{definition}{Definition}
\begin{document}

\title{On the Action of the Symmetric Group on the Cohomology of Groups Related to (Virtual) Braids}
\author{Peter Lee}
\maketitle

\begin{abstract}
In this paper we consider the cohomology of four groups related to the virtual braids of \cite{Kau} and \cite{GPV}, namely the pure and non-pure virtual braid groups ($\PvB$ and $\vB$, respectively), and the pure and non-pure flat braid groups ($\PfB$ and $\fB$, respectively).  The cohomologies of $\PvB$ and $\PfB$ admit an action of the symmetric group $S_n$.  We give a description of the cohomology modules $\Hipv$ and $\Hipf$ as sums of $S_n$-modules induced from certain one-dimensional representations of specific subgroups of $S_n$.  This in particular allows us to conclude that $\Hipv$ and $\Hipf$ are uniformly representation stable, in the sense of \cite{C-F}.  We also give plethystic formulas for the Frobenius characteristics of these $S_n$-modules.  We then derive a number of constraints on which $S_n$ irreducibles may appear in $\Hipv$ and $\Hipf$.  In particular, we show that the multiplicity of the alternating representation in $\Hipv$ and $\Hipf$ is identical, and moreover is nil for sufficiently large $n$.  We use this to recover the (previously known) fact that the multiplicity of the alternating representation in $\Hipb$ is nil (here $\PB$ is the ordinary pure braid group).  We also give an explicit formula for $\Hiv$ and show that $\Hif=0$.  Finally, we give Hilbert series for the character of the action of $S_n$ on $\Hipv$ and $\Hipf$.  An extension of the standard `Koszul formula' for the graded dimension of Koszul algebras to graded characters of Koszul algebras then gives Hilbert series for the graded characters of the respective quadratic dual algebras.
\end{abstract}

\tableofcontents

\section{Introduction}
In this paper we will be concerned with the action of the symmetric groups $S_n$ on the cohomology of the groups $\PvB$ and $\vB$ (pure and non-pure virtual braid groups) and $\PfB$ and $\fB$ (pure and non-pure flat braid groups), and on certain related algebras.  The context and nature of our investigation is the following.

Let $\{G_n\}_{n\in \N}$ be a family of groups equipped with maps $\phi_n: G_n \ra G_{n+1}$, and let $\{P_n\}_{n\in\N}$ be a family of normal subgroups, $P_n \trianglelefteq G_n$, such that the quotient groups $Q_n:=G_n/P_n$ are finite and there are diagrams

\[
\xy
(0,0)*+{0} = "1";
(15,0)*+{P_n} = "2";
(30,0)*+{G_n} = "3";
(45,0)*+{Q_n} = "4";
(60,0)*+{0} = "5";
{\ar@{->} "1"; "2"};
{\ar@{->} "2"; "3"};
{\ar@{->} "3"; "4"};
{\ar@{->} "4"; "5"};
(0,-15)*+{0} = "11";
(15,-15)*+{P_{n+1}} = "12";
(30,-15)*+{G_{n+1}} = "13";
(45,-15)*+{Q_{n+1}} = "14";
(60,-15)*+{0} = "15";
{\ar@{->} "11"; "12"};
{\ar@{->} "12"; "13"};
{\ar@{->} "13"; "14"};
{\ar@{->} "14"; "15"};
{\ar@{->}^{\phi_n^{res}} "2"; "12"};
{\ar@{->}^{\phi_n} "3"; "13"};
{\ar@{->}^{\phi_n^S} "4"; "14"};
\endxy
\]
which commute and have exact rows.  In this paper, the $Q_n$ will always be the symmetric groups $S_n$.

There is a circle of questions naturally associated to such a family.  Without attempting to be exhaustive, these include (fixing some coefficient ring $R$, and taking $k\geq 1$):

\begin{enumerate}

\item \emph{(Co-)homological Stability and Identification:}  Can the $H^k(G_n,R)$ be identified exactly? Do these modules stabilize for sufficiently large $n$?  (there are analogous questions pertaining to homology.)

\item \emph{Decomposition:}  There is a natural action of $Q_n$ on $P_n$, and an induced action on the cohomology rings $H^k(P_n,R)$.  What is the decomposition of the $H^k(P_n,R)$ as irreducible $Q_n$-modules?  In the alternative, or in addition, is there a succinct expression for the $H^k(P_n,R)$ as (possibly, sums of) induced modules?

\item \emph{Representation Stability:}  It is common for the groups $P_n$ not to be cohomologically stable.  However, following \cite{C-F}, we can ask whether the decomposition of the $H^k(P_n,R)$ as irreducible $Q_n$-modules stabilizes for sufficiently large $n$ (the exact definition of representation stability is given in Subsection \ref{RepStabSubsection}).

\item \emph{Graded Characters:}  Give Hilbert series describing the graded characters for the action of $S_n$ on $H^*(P_n,R)$.

\item \emph{Associated Graded and Malcev Lie Algebras:}  If the group algebras $RP_n$ are filtered by powers of their augmentation ideals, denote by $grRP_n$ the associated graded algebras.  By \cite{Quillen}, if (for instance) we take $R=\Q$, then $grRP_n$ is in fact the universal enveloping algebra of the Malcev Lie algebra of $P_n$.  Attempt to answer questions 1 - 4 for $grRP_n$ (whenever these questions make sense).

\end{enumerate}

There are several families of groups for which homological stability has been established, including Artin's braid groups $\{\B\}$, certain mapping class groups, and outer automorphism groups of free groups;  many of the resulting stable homology groups have actually been determined (see for instance \cite{C}, \cite{V}, \cite{C-F} and the references therein).\footnote{Much of the historical background in this introduction was drawn from \cite{C-F}, where a much more extensive discussion can be found.}  The rational cohomology of the string motion groups were shown to be trivial in \cite{W}, in degrees $k\geq 1$.

Specializing to the case when $Q_n$ is a finite Coxeter group and $G_n$ is the corresponding generalized braid group, it appears that little is known in general about the decomposition of $H^k(P_n,R)$ as irreducible $Q_n$-representations (although partial results relating to ordinary pure braid groups appear in \cite{L-S}).  However, an expression for $H^k(P_n,R)$ as a direct sum of representations induced from one-dimensional representations of certain centralizers in $S_n$ was given in the case $P_n=\PB$ in \cite{L-S}.

It was already clear from the computation of $\Hkpb$ in \cite{Arnold} that the dimension of these algebras grows without bound as $n$ increases, so that the $\PB$ are not cohomologically stable.  However, in \cite{C-F} T. Church and B. Farb introduced their notion of representation stability and showed that the cohomology of the groups $\PB$ is representation stable.  Representation stability for the cohomology of the pure string motion groups was established in \cite{W}.

Lehrer \cite{L}, and Blair and Lehrer \cite{B-L}, gave Hilbert series describing the graded character of the action of Coxeter groups on the cohomology of the complements of the related hyperplanes;  when the Coxeter group is $S_n$, this is equivalent to the graded character of $S_n$ on the cohomology of $\PB$.

There seems to have been little attention paid to the action of $Q_n$ on the associated graded algebras $grRP_n$, although as we will see their graded characters can be obtained from those of $H^*(P_n,\Q)$ when these algebras are Koszul.

Having outlined the types of question in which we shall be interested, we now turn to the groups $\PvB$, $\vB$, $\PfB$ and $\fB$, for which we will seek to answer those questions.  $\PvB$ is defined as the group generated by symbols $R_{ij}$, $1 \leq i \neq j \leq n$, subject to the Yang-Baxter (or Reidemeister III) relations and certain commutativities:

\begin{align}
R_{ij} R_{ik} R_{jk} & = R_{jk} R_{ik} R_{ij} \label{relations1} \\
R_{ij} R_{kl} & = R_{kl} R_{ij}, \quad \quad
\label{relations2}
\end{align}
with $i,j,k,l$ distinct.  The groups $\PfB$ are given by the same presentation but subject to the additional relations $R_{ij}R_{ji}=1$.\footnote{The definition of $\vB$ and $\fB$ is given in Subsection \ref{GroupDefsSubs}.}

The interest in the $S_n$-action on the cohomology of $\PvB$ and $\PfB$ derives partly from these groups' close relation to $\PB$.  In particular, as pointed out by Bartholdi, Enriques, Etingof and Rains in \cite{BEER}\footnote{In \cite{BEER}, $\PvB$ is referred to as the quasi-triangular group $QTr_n$, while $\PfB$ is referred to as the triangular group $Tr_n$.} there is an `almost' exact sequence of groups:

\begin{equation*}
1 \ra \PB \overset{\Psi_n}{\lra} \PvB \overset{\Pi_n}{\lra} \PfB \lra 1
\end{equation*}

`Almost' exact means that this sequence is exact on the left and right, and moreover the kernel of $\Pi_n$ is the normal closure of the image of $\Psi_n$.

In a similar vein, there is a complex at the level of cohomology:

\begin{equation}
\label{CohomExactSequence}
0 \ra \Hkpf \ra \Hkpv \ra \Hkpb \ra 0
\end{equation}
which is also exact on the left and right (though usually not in the middle).  Thus in suitable circumstances one can use information about the $S_n$ actions on two of these cohomologies to deduce information about the $S_n$ action on the third (see Theorem \ref{thm:PBAltMultThm} below).

An additional reason for the interest in $\PvB$, particularly, is its relation with the pure (untwisted) string motion group $\PwB$ (also known as the pure welded braid group, or the group of pure symmetric automorphisms of a free group of rank n). $\PwB$ is the quotient of $\PvB$ by the further relations $R_{ij}R_{ik}=R_{ik}R_{ij}$, when $i,j,k$ are distinct.

Much as for $\Hkpb$ and $\Hkpw$, there is an action of the symmetric group $\Sn$ on the cohomology modules $\Hkpv$ and $\Hkpf$.  A key result of this paper is the following theorem (reminders concerning wreath products, their representations, and related notation appears in Subsection \ref{WreathProdSubsection}):

\newtheorem*{thm:pvqkDecompositionThm}{Theorem \ref{thm:pvqkDecompositionThm}}
\begin{thm:pvqkDecompositionThm}
As an $S_n$-module, $\Hkpv$ has the following decomposition:

\begin{equation*}
\Hkpv = \bigoplus_{\substack{\alpha \vdash n \\ l(\alpha)=n-k}} \ind_{\prod_i 1\wr \Sai}^{S_n} \bigotimes_i \ \ois (-1)^{i-1}
\end{equation*}
where the direct sum is over partitions $\alpha= \sum_{i=1}^{n} i \alpha_i$ of $n$, with $\alpha_i$ non-negative, with $l(\alpha)=n-k$ parts.  Also:

\begin{enumerate}
\item In the product $\prod_i 1\wr \Sai$, $1\wr \Sai$ means $S_1\wr \Sai$, understood as the subgroup $1 \wr \Sai \leq S_i \wr S_{\alpha_i} \leq S_{i\ai}$, and $S_{i\ai}$ is viewed as the subgroup of $S_n$ which permutes the indices $\{1+\sum_{j=1}^{i-1} j \alpha_j, \cdots, i\ai +\sum_{j=1}^{i-1} j \alpha_j\}$;
\item $(-1)^{i-1}$ denotes the 1-dimensional representation of $\Sai$ which is either trivial or alternating, according to the parity of $i-1$;
\item For each $i=1,\dots,n$, $\ois (-1)^{i-1}$ is the pullback of $(-1)^{i-1}$ along the projection $\oi: 1\wr \Sai \thra \Sai$; and
\item $\bigotimes_i$ denotes the outer product of representations.
\end{enumerate}
\end{thm:pvqkDecompositionThm}

In a similar vein we have the following decomposition of $\Hkpf$.  We retain the notation introduced in the previous theorem, except that $\oi$ is now the projection $\oi: S_i\wr \Sai \thra \Sai$, and $\text{Alt}_i$ is the alternating representation of $S_i$.  We then write $\text{Alt}_i\wr \ois (-1)^{i-1}$ for the wreath product of representations (see Subsection \ref{WreathProdSubsection} for the definition). Then:

\newtheorem*{thm:pfqkDecompositionThm}{Theorem \ref{thm:pfqkDecompositionThm}}
\begin{thm:pfqkDecompositionThm}
As an $S_n$-module, $\Hkpf$ has the following decomposition:
\begin{equation*}
\Hkpf = \bigoplus_{\substack{\alpha \vdash n \\ l(\alpha)=n-k}} \ind_{\prod_i S_i\wr S_{\alpha_i}}^{S_n} \bigotimes_i \ \mathrm{Alt}_i\wr \ois (-1)^{i-1}
\end{equation*}
where the direct sum is over partitions $\alpha= \sum_{i=1}^{n} i \alpha_i$ of $n$, with $\alpha_i$ non-negative, and $l(\alpha)=n-k$ parts.
\end{thm:pfqkDecompositionThm}

We give an alternative formulation of these decomposition theorems in terms of plethysm of Frobenius characteristics in Theorems \ref{thm:pvkPlethysm} and \ref{thm:Plethysm} below.

As corollaries, we derive the fact that both $\Hkpv$ and $\Hkpf$ are uniformly representation stable, with stability range $n\geq 4k$, for all $k\geq 1$ (see Corollary \ref{pvqRepStabCor} and Corollary \ref{pfqRepStabCor}).

\newpage

Following the `transfer' argument applied by \cite{C-F} in the context of the (ordinary) braid group, we compute the cohomology groups $\Hkv$ and $\Hkf$, for sufficiently large $n$.  More precisely, we obtain:\footnote{For a module $V$ over a group $G$, we write $V^G$ for the $G$-invariant submodule of $V$.}

\newtheorem*{thm:vBCohomolThm}{Theorem \ref{thm:vBCohomolThm}}
\begin{thm:vBCohomolThm}
For $k,n \in \N$ and $n>k\geq 1$,
\begin{equation*}
\Hkpv^{S_n} \cong H^k(\vB,\Q) \cong \Q^{P(k)}
\end{equation*}
where $P(k)$ is the number of partitions of $k$ into at most $n-k$ parts, in which each odd part (if any) appears just once.

If in fact $n\geq 2k$, then $P(k)$ is the total number of partitions of $k$, in which each odd part (if any) appears just once.
\end{thm:vBCohomolThm}

We recall\footnote{See A006950, \emph{The On-Line Encyclopedia of Integer Sequences}, http://oeis.org/?language=english.} that a generating function for the number of partitions of $i$ with no repeated odd parts is:

\begin{equation*}
P(z)=\prod_{k>0} \frac{(1+z^{2k-1})}{(1-z^{2k})}
\end{equation*}

For $\Hkf$ we obtain:

\newtheorem*{thm:fBCohomThm}{Theorem \ref{thm:fBCohomThm}}
\begin{thm:fBCohomThm}
For $k,n \in \N$ and $n>k\geq 1$,
\begin{equation*}
\Hkpf^{S_n} \cong \Hkf \cong 0
\end{equation*}
\end{thm:fBCohomThm}

We can also consider the multiplicity of the alternating representation in $\Hipv$ and $\Hipf$.  We obtain the following theorem:\footnote{For an $S_n$-module $V$, we denote by $V^{Alt}$ the projection of $V$ onto the (sum of the) alternating submodules in $V$.}

\newtheorem*{thm:AltMultThm}{Theorem \ref{thm:AltMultThm}}
\begin{thm:AltMultThm}
For all $n>k\geq 1$,
\begin{equation*}
\Hkpv^{Alt} \cong \Hkpf^{Alt}
\end{equation*}
and both vanish for sufficiently large $n$ ($n\geq 2(k+1)$ will do.)
\end{thm:AltMultThm}

Then, using the complex (\ref{CohomExactSequence}) (and the fact that all the maps in the complex commute with the $S_n$ action), we recover the following theorem (originally proved in \cite{H} by different means) as an immediate corollary:

\newtheorem*{thm:PBAltMultThm}{Theorem \ref{thm:PBAltMultThm}}
\begin{thm:PBAltMultThm}
For all $n>k\geq 1$,
\begin{equation*}
\Hkpb^{Alt} =0
\end{equation*}
\end{thm:PBAltMultThm}

We can reformulate the decomposition for $\Hkpv$ and $\Hkpf$ in Theorems \ref{thm:pvqkDecompositionThm} and \ref{thm:pfqkDecompositionThm} in terms of plethysm, as follows (we denote ch$V$ the Frobenius characteristic of an $S_n$-module $V$):

\newtheorem*{thm:pvkPlethysm}{Theorem \ref{thm:pvkPlethysm}}
\begin{thm:pvkPlethysm}
\begin{equation*}
\mathrm{ch} \Hkpv = \bigoplus_{\substack{\ua \vdash k \\ l(\ua) \leq n-k}} \prod_t v_{t,a_t} [\mathrm{ch Reg}_{t+1}] \cdot h_{n-k-l(\ua)}
\end{equation*}
where the sum is over partitions $\ua: k=\sum_{t=1}^k ta_t,\ a_t \in \N$, of $k$, with length $l(\ua)$ no greater than $n-k$; and:

\begin{itemize}
\item $h_p=s_{(p)}$ is the Schur function for the partition $p=p$ of $p\in \N$;
\item $v_{t,a_t}:=h_{a_t}$ if $t$ is even, and $v_{t,a_t}:=e_{a_t}$ if $t$ is odd, where $e_p=s_{(1,\dots, 1)}$ is the Schur function for the partition of $p\in \N$ consisting of $p$ ones;
\item $v_{t,a_t}[\mathrm{ch Reg}_{t+1}]$ denotes plethysm of Schur functions.
\end{itemize}
\end{thm:pvkPlethysm}

\newtheorem*{thm:Plethysm}{Theorem \ref{thm:Plethysm}}
\begin{thm:Plethysm}
\begin{equation*}
\mathrm{ch} \Hkpf = \bigoplus_{\substack{\ua \vdash k \\ l(\ua) \leq n-k}} \prod_t v_{t,a_t} [e_{t+1}] \cdot h_{n-k-l(\ua)}
\end{equation*}
where the sum is over partitions $\ua: k=\sum_{t=1}^k ta_t,\ a_t \in \N$, of $k$, with length $l(\ua)$ no greater than $n-k$ (and we otherwise use the notation given in the previous theorem)
\end{thm:Plethysm}

Using this reformulation of Theorem \ref{thm:pfqkDecompositionThm}, we readily derive the following restrictions on which irreducibles may appear in $\Hkpf$:

\newtheorem*{thm:LamExceedsK}{Theorem \ref{thm:LamExceedsK}}
\begin{thm:LamExceedsK}
Let $\lambda: (\lambda_0 \geq \lambda_1 \geq \dots \geq \lambda_r >0)$ be a partition of $n$, that is:  $\sum_{i=0}^r \lambda_i= n$.  Denote by $V(\lambda)$ the $S_n$-irreducible corresponding to this partition.  Then $V(\lambda)$ occurs in $\Hkpf$ only if $|\lambda| := n-\lambda_0 \geq k$.\footnote{The notation $V(\lambda)$ for an irreducible representation of $S_n$ is further explained in subsection \ref{RepStabSubsection}.}
\end{thm:LamExceedsK}

\newtheorem*{thm:NoVp}{Theorem \ref{thm:NoVp}}
\begin{thm:NoVp}
Let $(p_0\geq p >0)$ be a partition of $n$ with two non-zero parts (that is, $n=p_0+p$).  Denote by $V(p)$ the $S_n$-irreducible corresponding to this partition.  As a representation of $S_n$, $\Hkpf$ does not include the irreducible $V(p)$, for any $p\geq 1$, except for $1$ copy of $V(1)$ when $k=1$.
\end{thm:NoVp}

Finally, we give Hilbert series encoding the characters of the $S_n$-actions on $\Hspv$, $\Hspf$, and their quadratic dual algebras.  For reasons to be explained below, we will denote by $\pvqsk$ the character of the $S_n$-action on the graded components $\Hkpv$, evaluated at any $\sigma \in S_n$.  We define:

\begin{equation*}
\pvqsz := \sum_{k\geq 0} \pvqsk z^k
\end{equation*}
We take $\mathfrak{pvb}_1^{!}=\Q$ to be the trivial representation, so that $\mathfrak{pvb}_1^{!}(z)=1$.

\newtheorem*{thm:PvBTheorem}{Theorem \ref{thm:PvBTheorem}}
\begin{thm:PvBTheorem}

1). Let $\sigma\in S_n$ have cycle type corresponding to a `homogeneous' partition of $n$, that is $n=k+ \dots + k$ (with $\alpha_k$ summands), for some $k, \alpha_k \geq 1$.  Then:

\begin{equation*}
\pvqsz =  \sum_{0\leq \beta \leq \alpha_k} L(\alpha_k,\beta) (-1)^{(\alpha_k-\beta)(k-1)} k^{(\alpha_k-\beta)} z^{(\alpha_k-\beta)k}
\end{equation*}
where the $L(p,q)$ stand for Lah numbers, which count the number of unordered partitions of $[p]:=\{1, \dots, p\}$ into $q$ ordered subsets.  This extends the case $\sigma = 1 \in S_n$, which gives the Hilbert series for the graded dimensions of $\pvq$, which were derived in \cite{BEER}.

2).  Now let $\sigma\in S_n$ have cycle type corresponding to a non-homogeneous partition $n=\sum_{i=1}^r i \ai$, with $i,\ai, r \in \N$.  Define $n_i = i \ai$, for $i = 1, \dots, r$, and denote $\mathfrak{pvb}_{n_i,i\ai}^!$ the character (given in 1)) corresponding to the partition $n_i=i + \dots +i$ ($\ai$ summands).  Then:

\begin{equation*}
\pvqz = \prod_{i= 1}^r \mathfrak{pvb}_{n_i,i\ai}^!
\end{equation*}
\end{thm:PvBTheorem}

The algebras $\Hspv$ and $\Hspf$ are quadratic algebras, in that they are graded algebras generated in degree 1 with (homogeneous) relations generated in degree 2.    Their `quadratic dual' algebras $\pv$ and $\pf$ also carry an action of the symmetric group $\Sn$.  The characters of these actions on $\pv$ and $\pf$ are given in terms of the graded characters for $\pvq$ and $\pfq$, respectively, by means of a `Koszul formula' for graded characters, which extends the standard Koszul formula which relates the Hilbert series encoding the graded dimensions of a Koszul algebra with the Hilbert series of the algebra's quadratic dual:

\newtheorem*{thm:ExtendedKoszulFormulaThm}{Theorem \ref{thm:ExtendedKoszulFormulaThm}}
\begin{thm:ExtendedKoszulFormulaThm}
Let $G$ be a finite group, let $V$ be a finite-dimensional representation of $G$, and let $G$ act diagonally on the (rational) tensor algebra $TV$.  Let $R\subseteq V \otimes V$ be a submodule and suppose $A:=TV / \langle R\rangle$ is a Koszul algebra.

Then $A$ is a graded representation of $G$ whose character satisfies the `Koszul' formula

\begin{equation}
\As(z) \Aqs(-z) = 1
\end{equation}
where $\As(z)$ is the (graded) character of the representation $A$ evaluated at the element $\sigma\in G$ (and similarly for $\Aqs(z)$).
\end{thm:ExtendedKoszulFormulaThm}

In Corollary \ref{pvTheorem} the Koszul formula for graded characters applies to give:

\begin{equation*}
\pvsz = \frac{1}{\mathfrak{pvb}_{n,\sigma}^{!}(-z)}
\end{equation*}

The graded characters for $\pfq$ and $\pf$ are given in Theorem \ref{PfBTheorem} and Corollary \ref{pfTheorem}.

The paper is organized as follows.  In Part 2 we review some background concerning the groups $\PvB$, $\vB$, $\PfB$ and $\fB$, including their relations to the pure and non-pure braid groups $\PB$ and $\B$.  We further review the definition of the associated graded algebras $\pv$ and $\pf$, and their quadratic duals (which correspond to the cohomology algebras of the respective groups), and explain the action of $S_n$ on these algebras.   We also give a very short primer on the notion of representation stability introduced in \cite{C-F}.  Finally we review some basic notions about wreath products of groups and their representations and introduce related notation which will be needed later.

In Part 3 we state and prove our results concerning the $S_n$-action on $\Hipv$ and $\pv$, as well as our results concerning the cohomology group $\Hsv$.  In Part 4 we cover the corresponding ground for $\PfB$ and $\fB$.  In Part 5 we state and prove the extended Koszul formula for characters of a finite group acting on a Koszul algebra.

\section{Background}

\subsection{The Groups \pmb{$\PvB$} and \pmb{$\PfB$}}
\label{GroupDefsSubs}

As the names suggest, the pure virtual braid groups and the pure flat braid groups are closely related to the pure braid groups.  We explain these relationships here, following \cite{BND}, \cite{Bard}, \cite{BEER} and \cite{Lee} (Section 2.4).

Recall that the braid group $\B$ is generated by the symbols $\{\si: i=1, \dots, (n-1)\}$, subject to the Reidemeister III relation and obvious commutativities:

\begin{align*}
\si\sii\si & = \sii\si\sii \\
\si\sj & = \sj\si \quad \quad \text{ for } |i-j|>1
\end{align*}

The generator $\si$ may be interpreted as corresponding to a braid with $n$ strands with the strand in position $i$ crossing over the adjacent strand to the `right' (i.e. the strand in position $(i+1)$), in a `positive' fashion:

\[
\xy
\vtwist~{(-3,3)}{(3,3)}{(-3,-3)}{(3,-3)};
(-12,-3)*{}; (-12,3)*{} **\dir{-};
(12,-3)*{}; (12,3)*{} **\dir{-};
(-7,0)*{\dots}; (7,0)*{\dots}; (-3,-5)*{i}; (3,-5)*{i \negthinspace + \negthinspace 1}
\endxy
\]
(we draw all strands with upwards orientation).

We obtain the (non-pure) \emph{virtual} braid group $\vB$ by adding to $\B$ the generators $\{s_i: i=1, \dots, (n-1)\}$, referred to as virtual crossings.  The $\{s_i\}$ are subject to the symmetric group relations:

\begin{align*}
s_i s_{i+1} s_i & = s_{i+1} s_i s_{i+1} \\
s_i s_j & = s_j s_i \quad \quad \text{ for } |i-j|>1 \\
s_i^2 &= 1
\end{align*}

The $\si$ and the $s_i$ are subject to certain `mixed' relations:

\begin{align*}
s_i\sii^{\pm 1} s_i & = s_{i+1} \si^{\pm 1} s_{i+1} \\
s_i\sj & = \sj s_i \quad \quad \text{ for } |i-j|>1
\end{align*}

In pictures, virtual crossings $s_i$ are drawn as

\[
\xy
(0,0)*{} = "1";
(10,0)*{} = "2";
(0,-10)*{} = "3";
(10,-10)*{} = "4";
(0,-12)*{i};
(10,-12)*{i+1};
{\ar@{->} "3"; "2"};
{\ar@{->} "4"; "1"};
\endxy
\]

Hence the pictures corresponding to the mixed relations are as follows (for the case where all ordinary crossings are positive -- one can draw similar pictures when the ordinary crossings are negative):

\[
\xy
0;/r.12pc/:
(-45,0)*{\xy
(-20,0)*{\xy
(-5,10)*{\xy (-5,-5)*{}; (5,5)*{} **\crv{(-5,1)&(5,-1)}
\POS?(.5)*{}="x"; (-5,5)*{}; "x" **\crv{(-5,1)}; "x"; (5,-5)*{}
**\crv{(5,-1)} \endxy};
(5,0)*{\xy (-5,-5)*{}; (5,5)*{} **\crv{(-5,1)&(5,-1)}
\POS?(.5)*{\hole}="x"; (-5,5)*{}; "x" **\crv{(-5,1)}; "x"; (5,-5)*{}
**\crv{(5,-1)} \endxy};
(-5,-10)*{\xy (-5,-5)*{}; (5,5)*{} **\crv{(-5,1)&(5,-1)}
\POS?(.5)*{}="x"; (-5,5)*{}; "x" **\crv{(-5,1)}; "x"; (5,-5)*{}
**\crv{(5,-1)} \endxy};
(10,-15)*{}; (10,-5)*{} **\dir{-}; (-10,-5)*{}; (-10,5)*{}
**\dir{-}; (10,5)*{}; (10,15)*{} **\dir{-}
\endxy};
(0,0)*{=};
(20,0)*{\xy
(5,10)*{\xy (-5,-5)*{}; (5,5)*{} **\crv{(-5,1)&(5,-1)}
\POS?(.5)*{}="x"; (-5,5)*{}; "x" **\crv{(-5,1)}; "x"; (5,-5)*{}
**\crv{(5,-1)} \endxy};
(-5,0)*{\xy (-5,-5)*{}; (5,5)*{} **\crv{(-5,1)&(5,-1)}
\POS?(.5)*{\hole}="x"; (-5,5)*{}; "x" **\crv{(-5,1)}; "x"; (5,-5)*{}
**\crv{(5,-1)} \endxy};
(5,-10)*{\xy (-5,-5)*{}; (5,5)*{} **\crv{(-5,1)&(5,-1)}
\POS?(.5)*{}="x"; (-5,5)*{}; "x" **\crv{(-5,1)}; "x"; (5,-5)*{}
**\crv{(5,-1)} \endxy};
(-10,-15)*{}; (-10,-5)*{} **\dir{-}; (10,-5)*{}; (10,5)*{}
**\dir{-}; (-10,5)*{}; (-10,15)*{} **\dir{-};
\endxy}
\endxy};
(-5,0)*{,};
(-75,-20)*{\scriptstyle{i}};(-65,-20)*{\scriptstyle{i+1}};(-55,-20)*{\scriptstyle{i+2}};
(-35,-20)*{\scriptstyle{i}};(-25,-20)*{\scriptstyle{i+1}};(-15,-20)*{\scriptstyle{i+2}};
(5,-20)*{\scriptstyle{i}};(15,-20)*{\scriptstyle{i+1}};(25,-20)*{\scriptstyle{j}};(35,-20)*{\scriptstyle{j+1}};
(55,-20)*{\scriptstyle{i}};(65,-20)*{\scriptstyle{i+1}};(75,-20)*{\scriptstyle{j}};(85,-20)*{\scriptstyle{j+1}};
(45,0)*{\xy
(-25,0)*{\xy
(15,0)*{\xy (-5,-5)*{}; (5,5)*{} **\crv{(-5,1)&(5,-1)}
\POS?(.5)*{}="x"; (-5,5)*{}; "x" **\crv{(-5,1)}; "x"; (5,-5)*{}
**\crv{(5,-1)} \endxy};
(-5,-10)*{\xy (-5,-5)*{}; (5,5)*{} **\crv{(-5,1)&(5,-1)}
\POS?(.5)*{\hole}="x"; (-5,5)*{}; "x" **\crv{(-5,1)}; "x"; (5,-5)*{}
**\crv{(5,-1)} \endxy};
(10,-15)*{}; (10,-5)*{} **\dir{-}; (-10,-5)*{}; (-10,10)*{}
**\dir{-}; (0,-5)*{}; (0,10)*{}
**\dir{-}; (10,5)*{}; (10,10)*{} **\dir{-}; (20,5)*{}; (20,10)*{}
**\dir{-}; (20,-5)*{}; (20,-15)*{}
**\dir{-}
\endxy};
(0,0)*{=};
(25,0)*{\xy
(-5,0)*{\xy (-5,-5)*{}; (5,5)*{} **\crv{(-5,1)&(5,-1)}
\POS?(.5)*{\hole}="x"; (-5,5)*{}; "x" **\crv{(-5,1)}; "x"; (5,-5)*{}
**\crv{(5,-1)} \endxy};
(15,-10)*{\xy (-5,-5)*{}; (5,5)*{} **\crv{(-5,1)&(5,-1)}
\POS?(.5)*{}="x"; (-5,5)*{}; "x" **\crv{(-5,1)}; "x"; (5,-5)*{}
**\crv{(5,-1)} \endxy};
(10,-15)*{}; (10,-15)*{} **\dir{-}; (-10,-15)*{}; (-10,-5)*{}
**\dir{-}; (-10,5)*{}; (-10,10)*{}
**\dir{-};(0,5)*{}; (0,10)*{}
**\dir{-}; (0,-15)*{}; (0,-5)*{}
**\dir{-};(10,-5)*{}; (10,10)*{} **\dir{-}; (20,-5)*{}; (20,10)*{}
**\dir{-};
\endxy}
\endxy}
\endxy
\]

The (non-pure) flat braid group is the quotient of $\vB$ by the relations $\si^2=1$, for all $i=1,\dots,n-1$.

One can show that the $\{\si\}$ generate a copy of the braid group $\B$, while the $\{s_i\}$ generate a copy of the symmetric group $S_n$, within $\vB$.  The map which sends each $\si$ and $s_i$ to $s_i$ gives a surjection $\vB \thra S_n$,whose kernel is by definition the pure virtual braid group, $\PvB$.  A presentation for $\PvB$ may be obtained using the Reidemeister-Schreier method, and the reader is referred to \cite{Bard} for a through explanation, or to \cite{Lee} (Subsection 2.4) for a quick overview.

One finds that $\PvB$ is generated by the set $\{R_{ij}\}_{1 \leq i \ne j \leq n}$, subject to the relations:

\begin{align*}
R_{ij} R_{ik} R_{jk} & = R_{jk} R_{ik} R_{ij} \label{relations1} \\
R_{ij} R_{kl} & = R_{kl} R_{ij}, \quad \quad
\end{align*}
with $i,j,k,l$ distinct.  A typical element $R_{ij}$ may be depicted:

\[
\xy
(0,0)*{}; (-10,0)*{} **\dir{-};
(0,0)*{}; (2,2)*{} **\crv{(2,0)};
(-10,-5)*{}; (-10,0)*{} **\crv{(-14,-2.5)};
(-9,-8)*{}; (-9,2) **\dir{-}  \POS?(.3)*{\hole} = "x";
(-10,-5)*{}; "x" **\dir{-};
"x"; (0,-5)*{} **\dir{-};
(0,-5)*{}; (2,-7)*{} **\crv{(2,-5)};
(2,-7)*{}; (2,-8)*{} **\dir{-};
(-1,-8)*{}; (-1,2)*{} **\dir{-};
(-3,-8)*{}; (-3,2)*{} **\dir{-};
(-6,-2)*{\dots};
(-9,-11)*{i}; (2,-11)*{j}
\endxy
\]
(with all strands oriented upwards).  One can show that the relations satisfied by the $\{s_i\}$ (mixed and unmixed) ensure that one can think of each generator $R_{ij}$ as an ordinary (positive) crossing of strand $i$ over strand $j$, with an arbitrary choice of virtual moves before and after the ordinary crossings to get the strands `into position' (see \cite{BND}).  Hence in pictures one frequently omits the virtual crossings.

The relations in the pure virtual braid group may be illustrated as follows:

\[
\xy
0;/r.12pc/:
(-45,0)*{\xy
(-20,0)*{\xy
(-5,10)*{\xy (-5,-5)*{}; (5,5)*{} **\crv{(-5,1)&(5,-1)}
\POS?(.5)*{\hole}="x"; (-5,5)*{}; "x" **\crv{(-5,1)}; "x"; (5,-5)*{}
**\crv{(5,-1)} \endxy};
(5,0)*{\xy (-5,-5)*{}; (5,5)*{} **\crv{(-5,1)&(5,-1)}
\POS?(.5)*{\hole}="x"; (-5,5)*{}; "x" **\crv{(-5,1)}; "x"; (5,-5)*{}
**\crv{(5,-1)} \endxy};
(-5,-10)*{\xy (-5,-5)*{}; (5,5)*{} **\crv{(-5,1)&(5,-1)}
\POS?(.5)*{\hole}="x"; (-5,5)*{}; "x" **\crv{(-5,1)}; "x"; (5,-5)*{}
**\crv{(5,-1)} \endxy};
(10,-15)*{}; (10,-5)*{} **\dir{-}; (-10,-5)*{}; (-10,5)*{}
**\dir{-}; (10,5)*{}; (10,15)*{} **\dir{-}
\endxy};
(0,0)*{=};
(20,0)*{\xy
(5,10)*{\xy (-5,-5)*{}; (5,5)*{} **\crv{(-5,1)&(5,-1)}
\POS?(.5)*{\hole}="x"; (-5,5)*{}; "x" **\crv{(-5,1)}; "x"; (5,-5)*{}
**\crv{(5,-1)} \endxy};
(-5,0)*{\xy (-5,-5)*{}; (5,5)*{} **\crv{(-5,1)&(5,-1)}
\POS?(.5)*{\hole}="x"; (-5,5)*{}; "x" **\crv{(-5,1)}; "x"; (5,-5)*{}
**\crv{(5,-1)} \endxy};
(5,-10)*{\xy (-5,-5)*{}; (5,5)*{} **\crv{(-5,1)&(5,-1)}
\POS?(.5)*{\hole}="x"; (-5,5)*{}; "x" **\crv{(-5,1)}; "x"; (5,-5)*{}
**\crv{(5,-1)} \endxy};
(-10,-15)*{}; (-10,-5)*{} **\dir{-}; (10,-5)*{}; (10,5)*{}
**\dir{-}; (-10,5)*{}; (-10,15)*{} **\dir{-};
\endxy}
\endxy};
(-5,0)*{,};
(-75,-20)*{i};(-65,-20)*{j};(-55,-20)*{k};
(-35,-20)*{i};(-25,-20)*{j};(-15,-20)*{k};
(5,-20)*{i};(15,-20)*{j};(25,-20)*{k};(35,-20)*{l};
(55,-20)*{i};(65,-20)*{j};(75,-20)*{k};(85,-20)*{l};
(45,0)*{\xy
(-25,0)*{\xy
(15,0)*{\xy (-5,-5)*{}; (5,5)*{} **\crv{(-5,1)&(5,-1)}
\POS?(.5)*{\hole}="x"; (-5,5)*{}; "x" **\crv{(-5,1)}; "x"; (5,-5)*{}
**\crv{(5,-1)} \endxy};
(-5,-10)*{\xy (-5,-5)*{}; (5,5)*{} **\crv{(-5,1)&(5,-1)}
\POS?(.5)*{\hole}="x"; (-5,5)*{}; "x" **\crv{(-5,1)}; "x"; (5,-5)*{}
**\crv{(5,-1)} \endxy};
(10,-15)*{}; (10,-5)*{} **\dir{-}; (-10,-5)*{}; (-10,10)*{}
**\dir{-}; (0,-5)*{}; (0,10)*{}
**\dir{-}; (10,5)*{}; (10,10)*{} **\dir{-}; (20,5)*{}; (20,10)*{}
**\dir{-}; (20,-5)*{}; (20,-15)*{}
**\dir{-}
\endxy};
(0,0)*{=};
(25,0)*{\xy
(-5,0)*{\xy (-5,-5)*{}; (5,5)*{} **\crv{(-5,1)&(5,-1)}
\POS?(.5)*{\hole}="x"; (-5,5)*{}; "x" **\crv{(-5,1)}; "x"; (5,-5)*{}
**\crv{(5,-1)} \endxy};
(15,-10)*{\xy (-5,-5)*{}; (5,5)*{} **\crv{(-5,1)&(5,-1)}
\POS?(.5)*{\hole}="x"; (-5,5)*{}; "x" **\crv{(-5,1)}; "x"; (5,-5)*{}
**\crv{(5,-1)} \endxy};
(10,-15)*{}; (10,-15)*{} **\dir{-}; (-10,-15)*{}; (-10,-5)*{}
**\dir{-}; (-10,5)*{}; (-10,10)*{}
**\dir{-};(0,5)*{}; (0,10)*{}
**\dir{-}; (0,-15)*{}; (0,-5)*{}
**\dir{-};(10,-5)*{}; (10,10)*{} **\dir{-}; (20,-5)*{}; (20,10)*{}
**\dir{-};
\endxy}
\endxy}
\endxy
\]

$\PfB$ has the same presentation as $\PvB$, but subject to the additional relations $R_{ij}R_{ji}=1$.

Recall that, in a similar way the pure braid group $\PB$ is, by definition, the kernel of the group homomorphism $\B \thra S_n$ which sends each $\si$ to $s_i$.  $\PB$ is generated by symbols $\{\Aij: 1\leq i<j \leq n\}$ (for the relations, see e.g. \cite{MarMc}).  As pointed out in \cite{BEER} (Section 4.3), there is a homomorphism $\Psi_n: \PB \ra \PvB$ defined by:

\begin{equation*}
\Aij \mapsto R_{j-1,j} \dots R_{i+1,j} R_{ij} R_{ji} (R_{j-1,j} \dots R_{i+1,j})^{-1}
\end{equation*}

There are also natural homomorphisms $\PB \rightarrow \PvB \rightarrow \PfB$, with the composition being the identity (this observation is due to \cite{BEER}, Subsection 2.3).  The second map, $\Pi_n$, sends all generators $R_{ij}$ to themselves, and the first sends $R_{ij}$ to itself whenever $i<j$.  We conclude that that $\PfB$ is a split quotient of $\PvB$.

One obtains a complex:

\begin{equation*}
1 \ra \PB \overset{\Psi_n}{\lra} \PvB \overset{\Pi_n}{\lra} \PfB \lra 1
\end{equation*}
which is clearly exact on the right, and is shown to be exact on the left in \cite{BEER};  moreover, the kernel of $\Pi_n$ is readily seen to be the normal closure of the image of $\Psi_n$.

\subsection{Associated Graded Algebras of the Group Algebras}

For any finitely presented group $G$, the group ring $\QG$ has a natural augmentation $\QG \thra \Q$ defined by sending each generator to $1\in \Q$.  If one filters the group ring $\QG$ by powers of the augmentation ideal (that is, the kernel of the above augmentation map), the associated graded ring $\grQG$ becomes a natural object of study.  We recall that in \cite{Quillen} it is shown that $\grQG$ is (isomorphic to) the universal enveloping algebra of the Lie algebra $grG \otimes \Q$.

In the case of $\PB$, the associated graded $\pb:=gr\Q\PB$ (often referred to as the chord diagram algebra) is known\footnote{See for instance \cite{Hutchings} and \cite{Kohno}.} to be generated by symbols $\{\aij: 1 \leq i \ne j \leq n; \aij=\aji\}$, subject to the relations:

\begin{align*}
[\aij,\aik+\ajk] &= 0 \\
[\aij,\akl]=0
\end{align*}
for distinct $i,j,k,l$.

In the case of $\PvB$, the associated graded $\pv:=gr\Q\PvB$ is known\footnote{See \cite{BEER} and \cite{Lee}.  In the terminology of \cite{BEER}, $\pv$ is $U(\mathfrak{qtr_n})$, the universal enveloping algebra of the `quasi-triangular' Lie algebra $\mathfrak{qtr_n}$.  The latter is the Lie algebra with the same generators and defining relations as $\pv$. } to be generated by symbols $\{\rij: 1 \leq i \ne j \leq n\}$, subject to the relations:

\begin{align*}
[\rij,\rik+\rjk] + [\rik,\rjk] &= 0 \\
[\rij,\rkl]=0
\end{align*}
for distinct $i,j,k,l$.

Finally, the associated graded $\pf:=gr\Q\PfB$ is known\footnote{See \cite{BEER} and \cite{Lee}.  In the terminology of \cite{BEER}, $\pf$ is $U(\mathfrak{tr_n})$, the universal enveloping algebra of the `triangular' Lie algebra $\mathfrak{tr_n}$, which in turn is the Lie algebra with the same generators and defining relations as $\pf$.} to have the same presentation as $\pv$, but with the extra relations $\rij + \rji =0$.

As in the case of the corresponding groups, one has homomorphisms $\psi_n: \pb \hra \pv$ defined by $\aij \mapsto \rij + \rji$, and $\pi_n: \pv \thra \pf$ which is just the quotient map.  Then again one has a complex

\begin{equation*}
0 \ra \pb \overset{\psi_n}{\lra} \pv \overset{\pi_n}{\lra} \pf \lra 0
\end{equation*}
which is exact on the right and left;  moreover, the kernel of $\pi_n$ is the normal closure of the image of $\psi_n$.\footnote{These observations are due to \cite{BEER}.}

\subsection{The Cohomology Algebras of $\pmb{\PB}$, $\pmb{\PvB}$, $\pmb{\PfB}$ and $\pmb{\PwB}$}
\label{CohomAlgSubsection}

We first recall the definition of quadratic algebras and their duals.

Let $V$ be a finite-dimensional vector space, and denote $TV$ the tensor algebra over $V$ (we assume rational coefficients).  Let $R\subseteq V\otimes V$ be a subspace and denote $\langle R\rangle$ the ideal in $TV$ generated by $R$.  These data permit one to define an algebra $A$ as:

\begin{equation*}
A:= \frac{TV}{\langle R\rangle}
\end{equation*}

\newpage

Such an algebra $A$ is known as a quadratic algebra.\footnote{For more on quadratic algebras, see \cite{PP}.}  The algebra $A$ has a `quadratic dual' $\Aq$ algebra defined as follows.  Let $\Rq\subseteq V^*\otimes V^*$ be the annihilator of $R$.  Then define:

\begin{equation*}
\Aq:= \frac{TV^*}{\langle \Rq\rangle}
\end{equation*}

The algebras $\pb$, $\pv$ and $\pf$ are obviously quadratic.  Their quadratic dual algebras $\pbq$, $\pvq$ and $\pfq$ are known to be the cohomology algebras of the corresponding groups:\footnote{In the case of $\PB$, see \cite{Arnold} and \cite{Kohno}, and in the case of $\PvB$ and $\PfB$ see \cite{BEER} and \cite{Lee}.}

\begin{align*}
\pbq & \cong H^*(\PB,\Q) \\
\pvq & \cong H^*(\PvB,\Q) \\
\pfq & \cong H^*(\PfB,\Q)
\end{align*}

One can readily confirm that $\pbq$ is the exterior algebra generated by $\{\aij: 1 \leq i \ne j \leq n;\ \aij=\aji\}$\footnote{Strictly speaking, the quadratic dual $\pbq$ is generated by the dual generators $\{\aij^*\}$.  However to simplify the notation we will drop the stars.  A similar convention will be adopted for $\pvq$ and $\pfq$.}, subject to the relations:

\begin{equation*}
\aij\ajk + \ajk\aik + \aik\aij= 0
\end{equation*}

Similarly, one finds that the algebra $\pvq$ is the exterior algebra generated by the set $\{\rij: 1 \leq i \ne j \leq n\}$, subject to the relations:

\begin{align}
\rij \wedge \rik & = \rij \wedge \rjk - \rik \wedge \rkj  \notag \\
\rik \wedge \rjk & = \rij \wedge \rjk - \rji \wedge \rik \label{pvbRelations} \\
\rij \wedge \rji & = 0 \notag
\end{align}
where the indices $i,j,k$ are all distinct.

A presentation for the cohomology algebra $\Hspw$ of the pure string motion group was obtained in \cite{J-Mc-M}.  It coincides with that of $\pvq$ given above except without the second line of relations.  Thus $\pvq$ is the quotient of $\Hspw$ by the ideal generated by the second line of relations above.

It is again straightforward to confirm that the algebra $\pfq$ is the exterior algebra generated by the set $\{ \rij : 1\leq i \ne j \leq n,\ \rij = - \rji\}$ subject to the relations:
\begin{align}
\label{pfbRelations}
\rij & \w \rik = \rij\w\rjk \\
\rik & \w \rjk = \rij\w\rjk \notag
\end{align}
for all $i,j,k$ such that $i<j<k$.

Finally, there are homomorphisms $\pi_n^!: \pfq \ra \pvq$ defined by $\rij \mapsto \rij - \rji$, and $\psi_n^!: \pvq \ra \pbq$ defined by $\rij \mapsto \aij$.  It is clear that these maps commute with the action of $S_n$.  One gets a complex:

\begin{equation}
\label{CohomExactSequenceBis}
0 \ra \Hipf \ra \Hipv \ra \Hipb \ra 0
\end{equation}
which is clearly exact on the right.  The fact that the complex is exact on the left follows from Lemma 4.4 of \cite{BEER}, which exhibits an alternative presentation of $\pvq$.  Specifically, one defines symmetric and anti-symmetric parts, $\uij$ and $\vij$, respectively, of the $\rij\in \pvq$; namely $\uij:=\rij+\rji$ and $\vij:=\rij-\rji$.  Then $\pvq$ is the exterior algebra generated by the $\{\vij, \uij\}$ subject to equations like\ (\ref{pfbRelations}) for the $\{\vij\}$, plus:

\begin{align}
\vjk\vij &= \uij\ujk+\ujk\uik+\uik\uij \label{MixedUandVRels} \\
\vij\ujk &= \vik\ujk \notag \\
\vij\uij &= 0 \notag
\end{align}
for distinct $i,j,k$.

The algebra $\pvq$ has a filtration given by deg$(\vij) = 0$ and deg$(\uij)=1$.  Passing to the associated graded $gr(\pvq)$, we find the same relations except that (\ref{MixedUandVRels}) is replaced by

\begin{equation*}
\uij\ujk+\ujk\uik+\uik\uij=0
\end{equation*}

It follows that the map $\phi_n^!: gr(\pvq) \ra \pfq$ given by $\vij \mapsto \rij$ and $\uij \mapsto 0$ is a homomorphic section of $gr(\pi_n^!)$, so that $gr(\pi_n^!)$ (and hence also $\pi_n^!$) is injective.

\subsection{The Action of $\pmb{S_n}$}

The symmetric group $S_n$ acts on the generators $\{ \rij\}$ of $\pv$, $\pvq$, $\pf$ and $\pfq$ via the map $\rij \mapsto r_{\sigma i,\sigma j}$ for $\sigma \in S_n$.  It is easy to check that the relations in the respective algebras are respected by this action, so the action descends to the algebras themselves.

The following definition will be central in our computation of the characters of $\pvq$:

\begin{definition}
\label{CharMonomDef}
Let $m$ be any element in a basis $\mB$ for $\pvq$ (the basis is assumed homogeneous with respect to the grading of $\pvq$).  For any element $\sigma \in S_n$, let $\chism$ be the coefficient of $m$ itself in the expansion of $\sigma(m)$ in terms of the basis $\mB$.  We say that $m$ is characteristic for $\sigma$ if the coefficient $\chism$ is non-zero.
\end{definition}

\newpage

It is clear that:

\begin{equation*}
\pvqsz=\sum_{\chism\ne 0} \chism z^{deg(m)}
\end{equation*}
where the sum is over $m\in \mB$ such that $\chism\ne 0$, and $deg(m)$ is the degree of $m$.  The determination of the character of $\pvq$ will be a matter of determining what are the characteristic monomials and counting their numbers and signs.\footnote{This overall manner of approach is similar to that pursued in \cite{L}, and is very effective in any case where a combinatorially workable basis for the representation is at hand.}

We will adopt a similar definition and notation for the character of $\pfq$.

\subsection{Representation Stability}
\label{RepStabSubsection}

We recall that a family $\{V_n\}$ of groups, or topological spaces, with maps $\phi_n: V_n \ra V_{n+1}$, is said to be cohomologically stable over a ring $R$ if for each $i\geq 1$ the induced map $(\phi_n)^i: H^i(V_{n+1},R) \ra H^i(V_n,R)$ is an isomorphism for $n>> i$.

As mentioned in the Introduction, it was clear already from \cite{Arnold} that for each $i\geq 1$, the dimensions of the algebras $\Hipb$ grow without bound as $n$ increases.  The same is true for the algebras $\Hipv$ and $\Hipf$, as follows from the Hilbert series for the dimensions of these algebras found in \cite{BEER}, reproduced in Equations (\ref{pvqHilbertSeries}) and (\ref{pfqHilbertSeriesEqn}) below.  Hence these algebras are not cohomologically stable.

In \cite{C-F} an alternative to cohomological stability was proposed, referred to as representation stability.  A key feature of their approach is a new notation for the irreducible representations of $S_n$, relative to which the decomposition of representation stable families of $S_n$-representations $\{V_n\}$ stabilizes for sufficiently large $n$.  Recall that for each integer $n\geq 2$, the irreducible representations of $S_n$ are indexed by partitions of $n$.  Given a sequence of integers $n_1\geq \dots \geq n_r \geq 0$ and an integer $n\geq 2n_1+ n_2+ \dots + n_r$, \cite{C-F} denote by $V(n_1, \dots, n_r)_n$ (or just $V(n_1, \dots, n_r)$ if the $n$ is understood) the irreducible $S_n$-representation corresponding to the partition given by the sequence $n-\sum_i n_i \geq n_1\geq \dots \geq n_r$.

As applied to the context of $\PvB$ and $\PfB$, the value of this notation is illustrated by the following computation:

\begin{align}
H^1(\PvBii,\Q) &= V(0) \oplus V(1) \notag \\
H^1(\PvBiii,\Q) &= V(0) \oplus V(1)^2 \oplus V(1,1) \notag \\
H^1(\PvB,\Q) &= V(0) \oplus V(1)^2 \oplus V(1,1) \oplus V(2) \quad \text{for } n\geq 4 \label{pvq1DecompEqn}
\end{align}

In a similar vein, one can show that:

\begin{align}
H^1(\PfBii,\Q) &= V(1) \label{pfqiDecompA} \\
H^1(\PfB,\Q) &= V(1) \oplus V(1,1) \quad \text{for } n\geq 3 \label{pfqiDecompB}
\end{align}

It turns out that similar stability patterns hold for all $i\geq 1$, for both $\PvB$ and $\PfB$, as will be seen in Corollary \ref{pvqRepStabCor} and Corollary \ref{pfqRepStabCor}.

The following definition of uniform representation stability (for the case of $S_n$-representations) is from \cite{C-F}.

\begin{definition}
Let $\{V_n\}$ be a sequence of $S_n$-representations, equipped with linear maps $\phi_n: V_n \ra V_{n+1}$, making the following diagram commutative for each $g\in S_n$:

\[
\xy
(0,0)*+{V_n} = "1";
(15,0)*+{V_{n+1}} = "2";
(0,-15)*+{V_n} = "3";
(15,-15)*+{V_{n+1}} = "4";
{\ar@{->}^{\phi_n\ } "1"; "2"};
{\ar@{->}^{g} "2"; "4"};
{\ar@{->}_g "1"; "3"};
{\ar@{->}_{\phi_n} "3"; "4"};
\endxy
\]
where $g$ acts on $V_{n+1}$ by its image under the standard inclusion $S_n \hra S_{n+1}$.  Such a sequence is called consistent.

We say that the sequence $\{V_n\}$ is uniformly representation stable if:

\begin{enumerate}

\item Injectivity:  The maps $\phi_n: V_n \ra V_{n+1}$ are injective, for sufficiently large $n$.

\item Surjectivity:  The span of the $S_{n+1}$-orbit of $\phi_n(V_n)$ is all of $V_{n+1}$, for sufficiently large $n$.

\item Multiplicities:  Decompose $V_n$ into irreducible $S_n$-representations as

\begin{equation*}
V_n= \bigoplus_{\lambda} \cln V(\lambda)
\end{equation*}
with multiplicities $0\leq \cln \leq \infty$.  Then there is an $N$, not depending on $\lambda$, so that for $n\geq N$ the multiplicities $\cln$ are independent of $n$ for all $\lambda$.  In particular, for any partition $\lambda$ corresponding to the sequence $\lambda_1 \geq  \dots \geq \lambda_r \geq 0$ for which $V(\lambda)_N$ is not defined (that is, $N < \lambda_1 + \sum_i \lambda_i$), $\cln=0$ for all $n\geq N$.
\end{enumerate}
\end{definition}

A key tool in proving uniform representation stability is a result of Hemmer \cite{H} (special cases of which had previously been conjectured by T. Church and B. Farb).  We give the presentation of this result which appears in \cite{C-F}.  Let $H$ be a subgroup of the symmetric group $S_k$, and let $V$ be any representation of $H$.  For $n\geq k$ we extend the action of $H$ on $V$ to an action of $H \times S_{n-k} < S_n$ by letting $S_{n-k}$ act trivially on $V$; this representation is denoted $V \otimes \Q$.

\begin{theorem} (Hemmer, \cite{H}).  Fix $r\geq 1$, a subgroup $H< S_r$, and a representation $V$ of $H$.  Then the sequence of $S_n$-representations $\{\ind_{H\times S_{n-r}}^{S_n} (V \otimes \Q)\}$ is uniformly representation stable.  The decomposition of this sequence stabilizes once $n\geq 2r$.
\label{HemmerThm}
\end{theorem}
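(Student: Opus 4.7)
The plan is to peel off two standard reductions and reduce everything to a single application of Pieri's rule. First, since induction is additive in the bottom representation and a finite direct sum of uniformly representation stable sequences (sharing a common threshold) is again uniformly representation stable, I may assume $V$ is irreducible. Second, by transitivity of induction,
\begin{equation*}
\ind_{H\times S_{n-r}}^{S_n}(V\otimes \Q) = \ind_{S_r\times S_{n-r}}^{S_n}\bigl((\ind_H^{S_r} V)\otimes \Q\bigr).
\end{equation*}
Since $\ind_H^{S_r} V$ is a fixed $S_r$-representation independent of $n$, decomposing it into $S_r$-irreducibles $V_\mu$ with $\mu \vdash r$ and invoking additivity once more reduces the problem to the case $H = S_r$, $V = V_\mu$.

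In this case the Frobenius characteristic of $\ind_{S_r\times S_{n-r}}^{S_n}(V_\mu\otimes \Q)$ is $s_\mu \cdot h_{n-r}$, and Pieri's rule gives
\begin{equation*}
s_\mu\cdot h_{n-r} = \sum_{\lambda} s_\lambda,
\end{equation*}
summed over partitions $\lambda \vdash n$ with $\lambda/\mu$ a horizontal strip, i.e.\ $\mu_i \leq \lambda_i$ and $\lambda_{i+1}\leq \mu_i$ for all $i\geq 1$ (with $\mu_0=\infty$). The essential observation is that the constraints on the ``tail'' $(\lambda_2,\lambda_3,\ldots)$ depend only on $\mu$ and not on $n$: from $\lambda_{i+1}\leq \mu_i$ we get $\sum_{i\geq 2}\lambda_i \leq r$, hence $\lambda_1 \geq n - r$. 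Once $n\geq 2r$, this yields $\lambda_1 \geq r \geq \mu_1 \geq \lambda_2$, so $\lambda_1$ is the longest part, and in the Church--Farb indexing $V_\lambda = V(\lambda_2,\lambda_3,\ldots)$. The admissible tails and their multiplicities then stabilize identically from $n=2r$ onward, which is exactly the required stability of multiplicities.

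It remains to handle the maps $\phi_n\colon V_n \to V_{n+1}$ induced by the inclusion $H\times S_{n-r} \hra H\times S_{n+1-r}$ (extending $S_{n-r}$ trivially so as to fix the new index). Injectivity follows from Mackey's formula applied to $V_{n+1}|_{S_n}$: the double coset of permutations fixing the new point contributes precisely a copy of $V_n$, giving a canonical splitting realised by $\phi_n$. The spanning condition is immediate, since every coset of $H\times S_{n+1-r}$ in $S_{n+1}$ has a representative that places the $r$ special indices inside $\{1,\dots,n\}$ and hence lies in the $S_{n+1}$-orbit of $\phi_n(V_n)$. The proof is then essentially routine; the main delicacy, and the only step worth spelling out carefully, is matching the Pieri-level decomposition with the Church--Farb indexing so that the threshold $n\geq 2r$ really guarantees $\lambda_1$ is the largest part in every summand. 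This boils down to the arithmetic $n-r\geq r\geq \mu_1$, which is where I expect the main bookkeeping effort (though not a deep obstacle) to lie.
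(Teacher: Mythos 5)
The paper does not actually prove this statement: it is quoted (in the Church--Farb formulation) as Hemmer's theorem, with the paper only adding the remark that injectivity and surjectivity follow because $\ind_{H\times S_{n-r}}^{S_n} (V\otimes\Q)$ is the $S_n$-span of the canonical copy of $V\otimes\Q$ inside $\ind_{H\times S_{n+1-r}}^{S_{n+1}}(V\otimes\Q)$. So there is no internal proof to compare against; what can be said is that your argument is correct and is essentially the standard one behind the citation. The reduction by transitivity of induction to $\ind_{S_r\times S_{n-r}}^{S_n}(V_\mu\otimes\Q)$, the Pieri expansion $s_\mu\cdot h_{n-r}=\sum_\lambda s_\lambda$ over horizontal strips, and the key inequality $\sum_{i\geq 2}\lambda_i\leq\sum_i\mu_i=r$ (so that for $n\geq 2r$ one has $\lambda_1\geq n-r\geq r\geq\mu_1\geq\lambda_2$, the admissible tails $(\lambda_2,\lambda_3,\dots)$ are an $n$-independent set each occurring with multiplicity one, and all the corresponding $V(\lambda_2,\lambda_3,\dots)_n$ are defined) give exactly the uniform stabilization of multiplicities at threshold $2r$. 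Your Mackey argument for injectivity and the coset-representative argument for spanning are correct and agree with the Church--Farb remark the paper reproduces; one could state the injectivity even more simply by noting that coset representatives of $S_n/(H\times S_{n-r})$ remain in distinct cosets of $S_{n+1}/(H\times S_{n+1-r})$ since $S_n\cap(H\times S_{n+1-r})=H\times S_{n-r}$.
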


In fact Hemmer only explicitly addressed the stabilization of the multiplicities, but as pointed out in \cite{C-F}, the injectivity and surjectivity requirements for representation stability follow from the fact that $\ind_{H\times S_{n-k-j}}^{S_n} V \otimes \Q$ is the $S_n$-span of $V\otimes \Q$ within $\ind_{H\times S_{n-k-j+1}}^{S_{n+1}} V \otimes \Q$.

\subsection{Reminders Concerning \pmb{$S_p \wr S_q$} and Plethysm}
\label{WreathProdSubsection}

We give some reminders concerning the wreath products $S_p\wr S_q$ and their representations, which we will need later.  References for this material include \cite{Sag}, \cite{M} and \cite{J-K}.

The action of $S_q$ on $[q]:=\{1, \dots, q\}$ naturally induces an action of $S_q$ on the product

\begin{equation}
\label{SpqNotationEqn}
S_{p,1} \times \dots \times S_{p,q}
\end{equation}
(where the $S_{p,i}$ are copies of $S_p$ labeled by $i\in [q]$).  Specifically, an element $\tau\in S_q$ acts on the above product by

\begin{equation*}
\tau \cdot (g_1, \dots, g_q) := (g_{\tau^{-1}(1)},\dots, g_{\tau^{-1}(q)})
\end{equation*}
for $g_i\in S_{p,i}$.  The wreath product $S_p\wr S_q$ is then defined as:

\begin{equation*}
S_p\wr S_q := \big( \prod_i S_{p,i} \big) \rtimes S_q
\end{equation*}

We write a general element of $S_p\wr S_q$ as:

\begin{equation}
\label{GeneralEltWreathEqn}
(g_1, \dots, g_q,\tau)
\end{equation}
with $g_i \in S_p$ and $\tau\in S_q$, and we denote by $1\wr S_q$ the subgroup of $S_p\wr S_q$ of elements with $g_i=1$ for all $i=1,\dots, q$.  The projection $\tilde{\omega}: S_p\wr S_q \thra 1\wr S_q$, defined by:

\begin{equation*}
(g_1,\dots,g_q,\tau) \mapsto (1,\dots,1,\tau)
\end{equation*}
is readily seen to be a homomorphism.  Its kernel is the product $\big(\prod_i S_{p,i} \big) \times \{1\}$; in particular this latter subgroup is normal.

We can naturally view $S_p \wr S_q$ as a subgroup of $S_{pq}$.  Indeed, for $i\in [q]$, define $[p]_i:=[ip]\setminus [(i-1)p]$. Then for $l,j\in\Z$ such that $l,l+j\in [q]$, there are bijections $\mu_{l,l+j}: [p]_l \ra [p]_{l+j}$ given by $x \mapsto x+jp$.  By identifying $[p]=[p]_1$ with $[p]_i$ via $\mu_{1,i}$, we may view $S_{p,i}$ (by definition, the group of permutations of $[p]$) as the group of permutations $S_{[p]_i}$ of $[p]_i$.  This gives us an embedding of the subgroup $\prod_i S_{p,i} \times \{1\}$ of $S_p\wr S_q$ into $S_{pq}$.  This embedding is compatible with the action of $S_q$, so we get an embedding of $S_p\wr S_q$ itself into $S_{pq}$.  One can show that $S_p \wr S_q$ is the normalizer of $\prod_i S_{p,i}$ in $S_{pq}$.

\newpage

We now introduce the notion of the wreath product of two modules, $W$ and $V$, over $S_p$ and $S_q$ respectively:

\begin{equation*}
W\wr V := \widetilde{W^{\otimes q}} \otimes \hat{V}
\end{equation*}
where the right-most tensor product is an inner product of modules over $S_p\wr S_q$, and:

\begin{itemize}

\item $\hat{V}$ is the $S_p\wr S_q$-module structure induced on $V$ by the composite $\omega: S_p\wr S_q \overset{\tilde{\omega}}{\ra} 1\wr S_q \overset{\sim}{\ra} S_q$ (where the last map is given by $(1,\dots,\tau) \mapsto \tau$, for $\tau \in S_q$), and

\item $\widetilde{W^{\otimes q}}$ is the vector space tensor product $W^{\otimes q}$ with $S_p\wr S_q$ action given by:

\begin{equation*}
(g_1,\dots, g_q,\tau)\cdot (w_1 \otimes \dots \otimes w_q) = g_1 w_{\tau^{-1}(1)} \otimes \dots \otimes g_q w_{\tau^{-1}(q)}
\end{equation*}

\end{itemize}

Note that we have used the notation $\otimes$ for both inner and outer tensor products.

For the remaining material in this subsection, we assume familiarity with the Frobenius characteristic of a module, and plethysm (see \cite{M}, section I.8).  We now turn to a key relation between the characteristic of a module induced from a wreath product of modules, and the plethystic composition of their characteristics.  Recall that if $g=g(x_1,\dots,x_t)$ is a polynomial which can be expressed as $g(x_1,\dots,x_t)=\sum_{i=1,\dots,k} m_i$ where the $m_i$ are monic monomials in the $\{x_j\}$ (with repeats allowed), and $f=f(y_1,\dots,y_k)$ is a symmetric polynomial in $k$ ($=$ the number of monic monomials in $g$) variables, then the plethysm $f[g]$ is defined as:

\begin{equation*}
f[g]= f(m_1,\dots,m_k)
\end{equation*}

Since $f$ is symmetric, the order in which we numbered the monomials in $g$ does not matter.  Then we have the following relations (ch(-) will denote the characteristic of a module)\footnote{For both relations, see \cite{M}.}:

\begin{equation}
\label{plethysm}
\ch \big( \ind_{S_p\wr S_q}^{S_{pq}} W\wr V \big) = \ch V [\ch W]
\end{equation}

Similarly

\begin{equation}
\label{plethysmB}
\ch \big( \ind_{S_p\times S_q}^{S_{p+q}} W \otimes V \big) = \ch W \cdot \ch V
\end{equation}

\section{Algebras Related to the Pure Virtual Braid Groups}

\subsection{Basis}

Monomials in $\pvq$ may conveniently be represented by graphs on the vertex set $[n]:=\{1, \dots , n\}$, with generators $\rij$ being represented by a directed edge or arrow from $i$ to $j$.  A given graph specifies a unique monomial up to sign.

In \cite{BEER} it was shown that $\pvq$ is a Koszul algebra\footnote{See the last section of this paper for a brief reminder on Koszul algebras.} and has the Hilbert series:

\begin{equation}
\label{pvqHilbertSeries}
\pvqz = \sum_{i=0}^n L(n,i) z^{(n-i)}
\end{equation}
where the $L(n,i)$ stand for Lah numbers (counting the number of unordered partitions of $[n]$ into $i$ ordered subsets).  In particular, $dim \ \mathfrak{pvb}_n^{!n-1} = n!$.

In \cite{Lee} a basis is given for $\pvq$ which makes clear the dependance on Lah numbers, and we recall this now.  A monomial in $\pvq$ (and its corresponding graph) is called admissible if the monomial is of the form $r_{i_1i_2}\w r_{i_2i_3}\w \dots \w r_{i_{m-1}i_m}$ with distinct $i_l$, $1\leq i_l \leq n$.  The set $\{i_1,i_2, \dots ,i_m\}$ is called the support of the monomial, and $i_1$ the root\footnote{This terminology is borrowed from \cite{BEER}, where it was applied to a basis for $\pfq$.}.  Thus admissible graphs are oriented chains of the form:

\[
\xy
(-5,0)*{} = "1";
(0,0)*{} = "2";
(15,0)*{} = "3";
(20,0)*{} = "4";
(-5,2)*{\scriptstyle{i_1}};
(0,2)*{\scriptstyle{i_2}};
(20,2)*{\scriptstyle{i_m}};
{\ar@{->} "1"; "2"};
{\ar@{.} "2"; "3"};
{\ar@{->} "3"; "4"};
\endxy
\]

The following is Theorem 7 from \cite{Lee} (in slightly different language):

\begin{proposition}
\label{BasisPropPvB}
Products of admissible monomials with disjoint supports (in the order of increasing roots) form a basis $\mB$ for $\pvq$.
\end{proposition}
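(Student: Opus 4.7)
The plan combines a straightening procedure with a dimension match against the known Hilbert series (\ref{pvqHilbertSeries}).

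First, I would establish that the proposed products span $\pvq$. Each monomial in $\pvq$ corresponds (up to sign) to a directed graph on $[n]$. The three relations (\ref{pvbRelations}), together with the exterior identities $r_{ij}\wedge r_{ij}=0$ and anticommutativity, form a rewrite system: the third relation kills 2-cycles; the first rewrites a ``vee'' $r_{ij}\wedge r_{ik}$ at a common source as the signed sum of the chains $r_{ij}\wedge r_{jk}$ and $r_{ik}\wedge r_{kj}$, strictly reducing the maximum out-degree; and the second does the symmetric job for ``lambdas'' at a common target. I would equip monomials with a suitable order (say, lex-ordering first by the total excess $\sum_v \max(0,\deg_{\mathrm{out}}(v)-1)+\max(0,\deg_{\mathrm{in}}(v)-1)$, with tie-breaks on the sorted degree sequence) so each rewrite strictly decreases the order. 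Iterating exhausts any graph to a linear combination of disjoint unions of oriented chains and longer oriented cycles. A short ancillary computation -- pivot on one edge of a cycle using the first relation -- expresses any oriented cycle of length $\geq 3$ as a combination of disjoint-chain products, completing the spanning step.

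Second, I would match counts. A product of admissible monomials with pairwise disjoint supports $S_1,\dots,S_r\subseteq [n]$ has degree $\big(\sum_j |S_j|\big) - r$. Adjoining each vertex of $[n]\setminus \bigcup_j S_j$ as its own singleton part gives an unordered partition of $[n]$ into $i=r+n-\sum_j |S_j|$ ordered subsets, and the degree in $\pvq$ is $n-i$. This correspondence is clearly bijective onto unordered partitions of $[n]$ into $i$ ordered subsets, which are counted by the Lah number $L(n,i)$; this matches the Hilbert series (\ref{pvqHilbertSeries}) in every degree. Since the spanning set has exactly the correct dimension in each degree, it must also be linearly independent, hence a basis.

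The main obstacle is the termination of the straightening and, in particular, the treatment of oriented cycles of length $\geq 3$: at every vertex of such a cycle the in- and out-degree are each exactly $1$, so none of the three relations applies directly to any edge-pair at a single vertex. Either a careful ad hoc expansion is needed to break such cycles by first applying the first or second relation to an edge-pair spanning two vertices of the cycle, or -- less elegantly -- one may defer this step and infer the vanishing of the cycle contribution post hoc from the final dimension comparison, invoking Koszulity of $\pvq$ from \cite{BEER} to ensure no hidden degree-0 overcount.
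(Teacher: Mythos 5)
The paper itself does not prove this proposition: it is quoted verbatim as Theorem 7 of \cite{Lee}, so there is no in-paper argument to measure you against. Your architecture --- show that the disjoint-chain products span, then match their count in each degree against the Hilbert series (\ref{pvqHilbertSeries}) --- is a legitimate route, and the counting half is correct and cleanly stated: padding the supports with singletons identifies the degree-$k$ chain products with unordered partitions of $[n]$ into $n-k$ ordered subsets, of which there are exactly $L(n,n-k)$, matching (\ref{pvqHilbertSeries}).

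The gap is in the spanning step. First, the termination measure you propose does not strictly decrease under the rewrites. Take $\Gamma=\rij\w\rik\w \rjl$ with $i,j,k,l$ distinct: the excess is $1$ (only the vee at $i$ contributes), and the first term of the rewrite, $\rij\w\rjk\w \rjl$, again has excess $1$ (the vee has merely migrated to $j$), with identical sorted in- and out-degree sequences, so neither your primary order nor your tie-break moves. (The process happens to terminate one step later in this example, but your argument does not prove termination in general; what is needed is a label-sensitive, Gr\"obner-style orientation of the relations, which is how \cite{BEER} treat $\pfq$ and how \cite{Lee} treats $\pvq$.) Second, your fallback of deferring the cycle reduction to the dimension count is logically insufficient: if the rewriting only reaches ``chains plus cycles,'' you have a spanning set strictly larger than the candidate basis, and knowing that the dimension equals the number of chain products does not let you discard the cycle monomials --- a spanning set containing a subset of the right cardinality need not have that subset as a basis, and Koszulity does not repair this. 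Your first fallback does work: for instance, rewriting $r_{12}\w r_{23}$ inside the $3$-cycle via the second relation gives $r_{12}\w r_{23}\w r_{31}=r_{13}\w r_{23}\w r_{31}+r_{21}\w r_{13}\w r_{31}$, and both terms die on $r_{13}\w r_{31}=0$; but this must be carried out for cycles of all lengths and woven into a genuinely terminating procedure before the dimension comparison can close the argument.
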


The following lemma is straightforward:

\begin{lemma}
\label{CharMonLemma}
Any collection $\Gamma$ of admissible graphs with disjoint supports determines a unique basis element in $\mB$, namely the product of the corresponding monomials, ordered by increasing roots.  If the union of the supports of $\Gamma$ has cardinality $\alpha$ and $\Gamma$ has $\beta$ components, the degree of the basis element of $\PvB$ determined by $\Gamma$ is $(\alpha - \beta)$.
\end{lemma}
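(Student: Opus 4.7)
The plan is to reduce the lemma almost directly to Proposition \ref{BasisPropPvB} together with a counting of edges in a disjoint union of chains. The first assertion is essentially a packaging statement: Proposition \ref{BasisPropPvB} says that $\mB$ consists of all products of admissible monomials on disjoint supports, multiplied in order of increasing root. Given a collection $\Gamma$ of admissible graphs whose supports are disjoint, each component graph is a directed chain $i_1 \to i_2 \to \cdots \to i_m$, and by the definition of ``admissible monomial'' this graph corresponds to the exterior product $r_{i_1 i_2} \w r_{i_2 i_3} \w \cdots \w r_{i_{m-1} i_m}$ (uniquely up to sign, since the sequence $i_1,\dots,i_m$ is determined by the oriented chain). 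Ordering the components of $\Gamma$ by increasing root then produces a specific element of $\mB$, which is what I would call the basis element determined by $\Gamma$.

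For the degree computation, I would simply count edges. Each component of $\Gamma$ that covers $m$ vertices is a chain on $m$ vertices and therefore contributes exactly $m-1$ factors of the form $r_{ij}$ to the product, so its degree in $\pvq$ is $m-1$. If the $\beta$ components of $\Gamma$ cover $m_1, m_2, \dots, m_\beta$ vertices respectively, then because the supports are disjoint we have $\sum_{j=1}^{\beta} m_j = \alpha$, and the total degree of the associated basis element is
\begin{equation*}
\sum_{j=1}^{\beta}(m_j - 1) \;=\; \Bigl(\sum_{j=1}^{\beta} m_j\Bigr) - \beta \;=\; \alpha - \beta.
\end{equation*}

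The only point that needs any care is the sign ambiguity in ``graph determines monomial'': as oriented chains, the components of $\Gamma$ are unordered, whereas the exterior product of their generators is sensitive to ordering. However, once we have fixed the convention of multiplying components in order of increasing root (as in Proposition \ref{BasisPropPvB}), and once we view graphs as determining monomials only up to sign, the resulting basis element of $\mB$ is unambiguous. I do not expect a genuine obstacle in this lemma; it is a bookkeeping statement whose main purpose is to introduce notation and terminology that will be used in the subsequent character computations.
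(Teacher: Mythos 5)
Your argument is correct and is exactly the bookkeeping the paper has in mind: the paper states this lemma without proof, calling it straightforward, and your reduction to Proposition \ref{BasisPropPvB} together with the edge count $\sum_j (m_j-1) = \alpha - \beta$ supplies precisely the intended justification. The remark about the sign ambiguity being resolved by the increasing-roots ordering is the right point of care and is consistent with the paper's conventions.
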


In light of the lemma, for notational simplicity we often conflate such a $\Gamma$ and the basis element it determines.

\newpage

\subsection{$\pmb{S_n}$ Representation on Top Degree Component of $\pvq$}

We can easily determine the nature of the top-degree representation $\pv^{!n-1}$:

\begin{theorem}
The top-degree representation $\pv^{!n-1}$ is (isomorphic to) the regular representation of $S_n$, for all $n$.
\end{theorem}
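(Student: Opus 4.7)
The plan is to read off the action of $S_n$ directly from the explicit basis of $\pv^{!n-1}$ provided by Proposition \ref{BasisPropPvB}, and then observe that this gives a bijective identification with $S_n$ itself on which $S_n$ acts by left multiplication.

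First I would pin down the basis. By Lemma \ref{CharMonLemma}, a basis element corresponding to a collection $\Gamma$ of admissible monomials with disjoint supports lives in degree $\alpha - \beta$, where $\alpha$ is the total number of vertices used and $\beta$ is the number of components. Since $\alpha \leq n$ and $\beta \geq 1$, degree $n-1$ forces $\alpha = n$ and $\beta = 1$. Hence every basis element of $\pv^{!n-1}$ is a \emph{single} admissible chain
\begin{equation*}
\Gamma_\pi := r_{i_1 i_2} \w r_{i_2 i_3} \w \cdots \w r_{i_{n-1} i_n}
\end{equation*}
where $(i_1, i_2, \ldots, i_n)$ is a permutation of $[n]$. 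This gives exactly $n!$ basis elements, matching the known dimension from (\ref{pvqHilbertSeries}).

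Next I would exploit the obvious bijection between this basis and $S_n$: identify $\Gamma_\pi$ with the permutation $\pi \in S_n$ defined by $\pi(k) := i_k$. Under the action $r_{ij} \mapsto r_{\sigma(i)\sigma(j)}$, the image of $\Gamma_\pi$ is
\begin{equation*}
\sigma \cdot \Gamma_\pi = r_{\sigma(i_1)\sigma(i_2)} \w r_{\sigma(i_2)\sigma(i_3)} \w \cdots \w r_{\sigma(i_{n-1})\sigma(i_n)} = \Gamma_{\sigma \pi},
\end{equation*}
with no sign corrections, since the wedge order is preserved and no relation of $\pvq$ needs to be applied (the result is again an admissible chain on all of $[n]$, hence already a basis element). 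Thus under the identification $\Gamma_\pi \leftrightarrow \pi$, the $S_n$-action on $\pv^{!n-1}$ is exactly left multiplication on $S_n$, which is the left regular representation.

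There is no serious obstacle: the only point requiring a moment of care is that $\sigma \cdot \Gamma_\pi$ produces a basis element \emph{on the nose}, with coefficient $+1$. This is immediate from the fact that admissible chains of length $n-1$ are already elements of the basis $\mB$, so no straightening via the relations (\ref{pvbRelations}) is needed and no signs are introduced.
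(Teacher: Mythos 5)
Your proof is correct and rests on exactly the same key observation as the paper's: in top degree every basis element is a single admissible chain on all of $[n]$, and $\sigma$ carries such a chain to another admissible chain with coefficient $+1$, no straightening required. The only difference is packaging — the paper turns this into a character computation ($\chi_\sigma = n!$ at $\sigma=1$ and $0$ otherwise, using $\dim \mathfrak{pvb}_n^{!n-1}=n!$ from the Hilbert series), whereas you exhibit the isomorphism with $\Q S_n$ under left multiplication directly, which is marginally more self-contained.
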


The proof will be an immediate consequence of the following lemma:

\begin{lemma}
The top-degree representation $\pv^{!n-1}$ has character $\pvs^{!n-1}=n!$, for $\sigma = 1$, and $\pvs^{!n-1}=0$, for $\sigma \ne 1$.
\end{lemma}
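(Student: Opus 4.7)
The plan is to combine the basis description in Proposition \ref{BasisPropPvB} with a direct inspection of the $S_n$-action on basis elements.

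First, I would identify the basis elements in top degree. By Lemma \ref{CharMonLemma}, an element of $\mB$ determined by a collection $\Gamma$ of admissible graphs with disjoint supports has degree $\alpha-\beta$, where $\alpha$ is the size of the union of the supports and $\beta$ is the number of connected components. Since $\alpha\leq n$ and $\beta\geq 1$, having degree $n-1$ forces $\alpha=n$ and $\beta=1$. Hence each top-degree basis element is a single admissible chain $r_{i_1 i_2}\w r_{i_2 i_3}\w\cdots\w r_{i_{n-1}i_n}$ whose vertex sequence $(i_1,\dots,i_n)$ is an ordering of $[n]$, and conversely each such ordering gives a basis element. Because $\rij$ and $\rji$ are distinct generators, distinct orderings give distinct basis elements, so $|\mB^{n-1}|=n!$, consistent with $\dim \mathfrak{pvb}_n^{!n-1}=n!$ from (\ref{pvqHilbertSeries}).

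Next, I would compute the $S_n$-action on such a basis element $m=r_{i_1 i_2}\w\cdots\w r_{i_{n-1}i_n}$. By the definition of the action, $\sigma(m)=r_{\sigma i_1,\sigma i_2}\w\cdots\w r_{\sigma i_{n-1},\sigma i_n}$, which is again an admissible chain on the vertex sequence $(\sigma i_1,\dots,\sigma i_n)\in S_n$, and therefore is itself (with coefficient $+1$) another element of $\mB$. In other words, $S_n$ permutes the top-degree basis $\mB^{n-1}$ by the obvious action on orderings of $[n]$, with no sign ambiguity.

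Now I read off the character from Definition \ref{CharMonomDef}. We have $\chism\ne 0$ iff $\sigma(m)=m$, iff $(\sigma i_1,\dots,\sigma i_n)=(i_1,\dots,i_n)$, iff $\sigma$ fixes every element of $[n]$, iff $\sigma=1$. Consequently, for $\sigma\ne 1$ every $\chism$ vanishes and $\pvs^{!n-1}=0$, while for $\sigma=1$ every $\chism=1$ and summing over $\mB^{n-1}$ yields $\pvs^{!n-1}=n!$.

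There is really no substantive obstacle here; the only point that needs mild care is that admissible chains are oriented, so a chain and its reversal give different basis elements, ensuring that $S_n$ acts freely (and hence regularly) on $\mB^{n-1}$. The theorem is then immediate: a representation of $S_n$ of dimension $n!$ whose character vanishes off the identity must be the regular representation.
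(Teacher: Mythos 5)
Your proof is correct and follows essentially the same route as the paper: identify the top-degree basis elements as full-length admissible chains (orderings of $[n]$), observe that $S_n$ permutes them without signs, and note that a chain on all $n$ vertices is fixed only by $\sigma=1$. The paper's version is terser but uses exactly the same ingredients, including the dimension count $n!$ from (\ref{pvqHilbertSeries}).
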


\begin{proof}
If $\Gamma$ is a basis element and $\Gamma \in \pvqni$, then $\Gamma$ has just 1 connected component (see Lemma \ref{CharMonLemma}), which must be an admissible graph (that is, an oriented chain).  Every permutation $\sigma$ sends an oriented chain to another oriented chain $\sigma\Gamma$, which is therefore also an admissible graph.  But then\footnote{See the definition of $\chism$ in Definition \ref{CharMonomDef}.} we can only have $\chis(\Gamma)=1$ or $\chis(\Gamma)=0$, and the former holds if and only if $\sigma=1$.  Moreover, we know\footnote{See Equation (\ref{pvqHilbertSeries}) and the subsequent comments.} that $dim\ \pvqni=n!$, and the lemma follows.
\end{proof}

The Theorem is then follows immediately from the basic representation theory of the symmetric group.

\subsection{The Action of $S_n$ on $\pvq$}

We will write $(-1)^i$ for the 1-dimensional representation (of any symmetric group $S_p$) which is either trivial or alternating, according to the parity of $i$.

\begin{theorem}
\label{thm:pvqkDecompositionThm}
As an $S_n$-module, $\pvqk$ has the following decomposition:\footnote{See Subsection \ref{WreathProdSubsection} for notation related to wreath products.}

\begin{equation}
\label{thm:pvqkDecompositionEqn}
\pvqk \cong \bigoplus_{\substack{\alpha \vdash n \\ l(\alpha)=n-k}} \ind_{\prod_i 1\wr \Sai}^{S_n} \bigotimes_i \ \ois (-1)^{i-1}
\end{equation}
where the direct sum is over partitions $\alpha= \sum_{i=1}^{n} i \alpha_i$ of $n$, with $\alpha_i$ non-negative, and with `length' $l(\alpha) = n-k$ parts.  Also:

\begin{enumerate}
\item In the product $\prod_i 1\wr \Sai$, the $1\wr \Sai$ are to be understood as the subgroups $1 \wr \Sai \leq S_i \wr S_{\alpha_i} \leq S_{i\ai}$ and $S_{i\ai}$ is viewed as the subgroup of $S_n$ which permutes the indices $\{1+\sum_{j=1}^{i-1} j \alpha_j, \cdots, i\ai +\sum_{j=1}^{i-1} j \alpha_j\}$;
\item For each $i=1,\dots,n$, $\ois (-1)^{i-1}$ is the pullback of the representation $(-1)^{i-1}$ of $\Sai$ along the projection $\oi: 1\wr \Sai \thra \Sai$, given by $(1,\dots,1,\tau) \mapsto \tau$, for $\tau\in \Sai$; and
\item $\bigotimes_i \ \ois (-1)^{i-1}$ is the outer product of its components.
\end{enumerate}
\end{theorem}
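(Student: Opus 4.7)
The plan is to argue directly on the admissible basis $\mathcal{B}$ of Proposition \ref{BasisPropPvB}. Each basis element $\Gamma \in \mathcal{B}$ decomposes $[n]$ into a disjoint union of oriented chains (including possibly chains of length one, i.e., isolated vertices). I record the combinatorial type of $\Gamma$ by the partition $\alpha = \sum_i i \alpha_i \vdash n$ where $\alpha_i$ is the number of chains on $i$ vertices; Lemma \ref{CharMonLemma} then gives $\deg \Gamma = n - l(\alpha)$. Hence the subspace $V_\alpha \subseteq \pvq$ spanned by basis elements of shape $\alpha$ lies in $\pvqk$ exactly when $l(\alpha) = n - k$, and $\pvqk = \bigoplus_{l(\alpha) = n-k} V_\alpha$ as a vector space.

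Next I verify that each $V_\alpha$ is $S_n$-stable. Given $\sigma \in S_n$ and $\Gamma \in V_\alpha$, the graph $\sigma \cdot \Gamma$ is again a disjoint union of oriented chains of exactly the same shape $\alpha$; writing it as the unique basis element obtained by reordering the chain-factors in order of increasing roots introduces a sign coming from permuting homogeneous elements in the exterior algebra. So $V_\alpha$ is a signed permutation representation and it suffices to identify its isomorphism type.

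Fix $\alpha$ and pick the standard basis element $\Gamma_0^\alpha \in V_\alpha$ obtained by placing the $\alpha_i$ chains of length $i$ on the consecutive blocks of indices $\{1 + \sum_{j<i} j\alpha_j, \dots, i\alpha_i + \sum_{j<i} j\alpha_j\}$, each oriented in increasing order, as specified in part (1) of the theorem. The central computation is to determine the up-to-sign stabilizer
\[
\stab^{\pm}(\Gamma_0^\alpha) := \{\sigma \in S_n : \sigma \cdot \Gamma_0^\alpha = \pm \Gamma_0^\alpha\}
\]
and the associated sign character. I expect to show $\stab^{\pm}(\Gamma_0^\alpha) = \prod_i 1 \wr S_{\alpha_i}$ by two observations: (a) since an oriented chain is determined by the ordered tuple of its vertices, and since $r_{ij}$ and $r_{ji}$ are \emph{independent} generators of $\pvq$, no nontrivial permutation of the vertices within a single chain fixes the chain even up to sign, which eliminates all of the $S_i$-factors and leaves only $1 \wr S_{\alpha_i}$; (b) cross-block permutations cannot preserve the shape. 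For the sign character: a transposition of two chains of size $i$ in the wedge product contributes $(-1)^{(i-1)(i-1)} = (-1)^{i-1}$, so the $S_{\alpha_i}$-factor acts with character $\mathrm{sgn}^{\,i-1}$, which under the identification $1 \wr S_{\alpha_i} \xrightarrow{\omega_i} S_{\alpha_i}$ is exactly $\omega_i^*(-1)^{i-1}$.

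Once the stabilizer and its character are pinned down, I invoke the standard fact that a signed permutation representation on a single $G$-orbit through $s_0$ with sign-character $\chi$ on $\stab^\pm(s_0)$ is isomorphic to $\ind_{\stab^\pm(s_0)}^G \chi$; this yields $V_\alpha \cong \ind_{\prod_i 1\wr S_{\alpha_i}}^{S_n} \bigotimes_i \omega_i^*(-1)^{i-1}$, and summing over $\alpha$ with $l(\alpha) = n-k$ gives (\ref{thm:pvqkDecompositionEqn}). As a consistency check, the dimension of the right-hand side is $\sum_\alpha n!/\prod_i \alpha_i!$, which agrees with the Lah number $L(n, n-k)$ from (\ref{pvqHilbertSeries}). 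The main obstacle is the sign bookkeeping in step (a): one must carefully exploit the asymmetry between $r_{ij}$ and $r_{ji}$ to rule out within-chain reflections or cyclic shifts contributing to $\stab^\pm$, and then track the reordering signs when a generic $\sigma$ permutes the chains of $\Gamma_0^\alpha$ so that the sign character reduces precisely to the advertised tensor product.
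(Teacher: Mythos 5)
Your proposal is correct and is essentially the paper's argument: the paper likewise decomposes $\pvqk$ by the shape $\alpha$ of the chain decomposition, identifies each shape-isotypic piece as a single induced module, and obtains the character of $\prod_i 1\wr \Sai$ from exactly your sign computation $(-1)^{(i-1)(i-1)}=(-1)^{i-1}$ for transposing two length-$i$ chains (with within-chain permutations excluded because they produce a \emph{different} admissible basis monomial). The only difference is organizational: the paper inducts in two stages --- first from $\stab(\mSa)=\prod_i S_i\wr \Sai$ (the stabilizer of the underlying set partition), then showing that restriction is itself induced from $\prod_i 1\wr\Sai$ via the subspace of ascending chains --- whereas you collapse this into one step by computing the up-to-sign stabilizer of a single basis element directly.
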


The form of this theorem (decomposition of representations into a sum, over partitions, of certain $1$-dimensional induced representations) is similar to decompositions of the cohomology of the pure braid groups and of the pure welded braid groups, appearing for instance in \cite{O-S}, \cite{L-S}, \cite{C-F} and \cite{W},  which is perhaps not surprising considering how closely related these groups are to $\PvB$.  The overall strategy of the proof is also similar, and in particular owes much to the above papers.

\begin{proof}[Proof of the Theorem]

Note first that a basis element in $\mB$ induces a partition of $[n]$, specifically the partition given by the vertex sets of the basis element's connected components.  Let $\mS$ be a partition of $[n]$ into $(n-k)$ subsets,  and let $\pvqs$ be the vector space generated by those basis graphs whose induced partition is $\mS$.  We let $\bmS$ be the partition of $n$ whose parts are the sizes (that is, the number of vertices) of the subsets in $\mS$.

\begin{example}
\label{PartitionOf4Exa}
Let $n=4$, and consider the partition $\mS=\{1,2\} \sqcup \{3,4\}$.  Then $\pvqs$ is generated by:

\begin{equation*}
r_{12}\w r_{34}, \quad r_{12}\w r_{43}, \quad r_{21}\w r_{34}, \quad r_{21}\w r_{43}
\end{equation*}
and $\bmS$ is the partition $4=2+2$.
\end{example}

Since $S_n$ relabels the vertices in a graph without changing the sizes of the connected components, the $S_n$-orbit of $\pvqs$ consists of the spaces $\pvqsp$ where $\mS'$ is a partition of $[n]$ with $\overline{\mS'}=\bmS$.  So $\pvqk$ decomposes as:

\begin{align}
\pvqk &= \bigoplus_{\substack{\alpha \vdash n \\ l(\alpha)=n-k}} \bigoplus_{\mS: \bmS = \alpha} \pvqs \notag \\
&=  \bigoplus_{\substack{\alpha \vdash n \\ l(\alpha)=n-k}} \ind_{\stab(\mSa)}^{S_n} \pvqsa \label{SumOfInducedRepsEqn}
\end{align}
where $\alpha \vdash n$ means $\alpha$ is a partition of $n$, $l(\alpha)=n-k$ means $\alpha$ has $n-k$ parts, and if $\alpha$ is the partition $n=\sum_{i=1}^n i\ai$ then $\mSa$ is the partition of $[n]$ given by $S_1 \sqcup \dots \sqcup \mS_n$, where

\begin{equation}
\label{ShiftedSets}
\mS_i := \mathrm{Shift}\big( \{1, \dots, i\} \sqcup \{i +1, \dots, 2i \} \sqcup \dots \sqcup  \{(\ai-1)i+1, \dots, i\ai\} \big)
\end{equation}
and Shift$( - )$ means we shift all the indices up by $\sum_{l=1}^{i-1} l\alpha_l$.

The stabilizer $\stab(\mSa)$ can permute the $\alpha_i$ subsets of size $i$, or permute the elements of each subset.  Hence $\stab(\mSa)$ is a direct product of wreath products:

\begin{equation*}
\stab(\mSa)=\prod_{i=1}^n S_i \wr \Sai
\end{equation*}

As a representation of $\stab(\mSa)$, the space $\pvqsa$ is clearly isomorphic to an outer tensor product

\begin{equation}
\label{StabProdEqn}
\pvqsa \cong \pvqsat := \mathfrak{pvb}_{1\alpha_1}^{!\mS_1} \otimes \mathfrak{pvb}_{2\alpha_2}^{!\mS_2} \otimes \dots \otimes \mathfrak{pvb}_{n\alpha_n}^{!\mS_n}
\end{equation}
where the $\mS_i$ were defined in (\ref{ShiftedSets}), and we take the spaces  $\mathfrak{pvb}_{i\alpha_i}^{!}$ to be supported on Shift$(\{ 1, \dots, i\ai \})$.

Finally

\begin{equation}
\ind_{\stab(\mSa)}^{S_n} \pvqsa \cong \ind_{\prod_{i=1}^n S_i \wr \Sai}^{S_n} \otimes_i \mathfrak{pvb}_{i\alpha_i}^{!\mS_i} \label{OrbitDecompEqn}
\end{equation}

We now need the following:

\begin{lemma}
$\mathfrak{pvb}_{i\alpha_i}^{!\mS_i} \cong \ind_{1 \wr \Sai}^{S_i\wr \Sai} \ois (-1)^{i-1}$
\end{lemma}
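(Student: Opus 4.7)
The plan is to exhibit a distinguished line in $\mathfrak{pvb}_{i\ai}^{!\mS_i}$ whose stabilizer in $S_i\wr\Sai$ is exactly $1\wr\Sai$, on which $1\wr\Sai$ acts through the character $\ois(-1)^{i-1}$, and whose $(S_i\wr\Sai)$-orbit meets every basis element of Proposition \ref{BasisPropPvB}. Since the dimensions will match, this will yield the induced-representation isomorphism.

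Concretely, let $\phi_j:[i]\to[i]_j$ denote the standard shift identifying $[i]$ with the $j$-th block of $\mS_i$, fix the reference admissible chain $\Gamma_0:=r_{12}\w r_{23}\w\cdots\w r_{(i-1)i}$ on $[i]$, and set
\[
w_0:=\phi_1(\Gamma_0)\w\phi_2(\Gamma_0)\w\cdots\w\phi_{\ai}(\Gamma_0)\in \mathfrak{pvb}_{i\ai}^{!\mS_i}.
\]
First I would analyze how a general element $(g_1,\ldots,g_{\ai},\tau)\in S_i\wr\Sai$ acts on $w_0$. The factor $(g_1,\ldots,g_{\ai},1)$ sends $\phi_j(\Gamma_0)$ to the admissible chain $\phi_j(g_j\Gamma_0)$ on block $j$, with no sign since each block is transformed independently and each $\phi_j(g_j\Gamma_0)$ is itself a basis chain on its block. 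The factor $(1,\ldots,1,\tau)$ carries $\phi_j(\Gamma_0)$ to $\phi_{\tau(j)}(\Gamma_0)$, after which restoring the canonical block order amounts to permuting $\ai$ wedge factors, each of exterior degree $i-1$, by $\tau$; this produces the scalar $\mathrm{sgn}(\tau)^{i-1}$.

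Two consequences then fall out. First, the stabilizer of the line $\Q w_0$ is exactly $1\wr\Sai$: any $g_j\ne 1$ changes the admissible chain on block $j$ (since admissible chains on $[i]$ biject with $S_i$), producing a genuinely different basis element, while each $(1,\ldots,1,\tau)$ merely rescales $w_0$ by $\mathrm{sgn}(\tau)^{i-1}$. Second, this scalar is precisely the character $\ois(-1)^{i-1}$ of $1\wr\Sai$: trivial when $i$ is odd, the sign character when $i$ is even. Varying $(g_1,\ldots,g_{\ai})\in S_i^{\ai}$, one reaches (up to sign) every admissible-chain product $\phi_1(\Gamma^{(1)})\w\cdots\w\phi_{\ai}(\Gamma^{(\ai)})$ of Proposition \ref{BasisPropPvB}, so the $(S_i\wr\Sai)$-orbit of $\Q w_0$ spans $\mathfrak{pvb}_{i\ai}^{!\mS_i}$. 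Since $[S_i\wr\Sai:1\wr\Sai]=(i!)^{\ai}=\dim\mathfrak{pvb}_{i\ai}^{!\mS_i}$, the natural $(S_i\wr\Sai)$-equivariant map $\ind_{1\wr\Sai}^{S_i\wr\Sai}\ois(-1)^{i-1}\to\mathfrak{pvb}_{i\ai}^{!\mS_i}$ sending the generating vector to $w_0$ is surjective and, by dimension, an isomorphism.

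The only point demanding care is the sign $\mathrm{sgn}(\tau)^{i-1}$ arising when the $\ai$ wedge factors of degree $i-1$ are reordered: this is exactly what produces the twist $(-1)^{i-1}$ in the induced character, and its dependence on the parity of $i-1$ is essential to the final formula. Everything else reduces to orbit--stabilizer bookkeeping against the explicit basis of Proposition \ref{BasisPropPvB}.
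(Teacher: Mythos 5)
Your proof is correct and follows essentially the same route as the paper: the paper likewise singles out the one-dimensional "ascending" subspace (your $\Q w_0$), shows $1\wr\Sai$ acts on it by $\ois(-1)^{i-1}$ via the sign $(-1)^{(i-1)|\oi\sigma|}$ incurred in restoring the increasing-roots order, and uses the coset representatives $\prod_j S_{i,j}\times\{1\}$ to recognize $\mathfrak{pvb}_{i\ai}^{!\mS_i}$ as the induced module. Your surjectivity-plus-dimension-count ending is an equivalent repackaging of the paper's direct-sum decomposition over those coset representatives.
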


\begin{proof}[Proof of Lemma]

For any $n\geq 2$, we define $\paq$ as the vector space generated by the basis elements in $\pvq$ whose graph is a forest of trees in which the vertices appear in increasing order (the `a' in $\paq$ refers to this `ascending' order, and we refer to graphs corresponding to basis elements in $\paq$ as ascending graphs).  In terms of the representation of admissible monomials as $r_{i_1i_2}\w r_{i_2i_3}\w \dots \w r_{i_{m-1}i_m}$, this means $i_1 < \dots < i_m$.

Given a (homogeneous) partition $\alpha:  n= i+\dots +i$ ($\ai$ copies), the subspace $\mathfrak{pab}_n^{!\mSa}$ (generated by the ascending basis graph $m$ whose induced partition is $\mSa$) is a one-dimensional space on which an element $\sigma \in 1\wr S_{\ai}$ acts by relabeling the vertices of the basis element $m$ according to $\sigma$.  As a result, while the same monomials appear in $\sigma m$ as in $m$ (since $\sigma \in 1\wr S_{\ai}$), the order of the components of length $i$ is permuted according to $\oi(\sigma) \in \Sai$.  The cost of bringing the permuted components of the basis element back to position (with increasing roots) is precisely the sign $(-1)^{(i-1)|\oi \sigma|}$ (where $|\oi\sigma|$ is the parity of $\oi\sigma$).  Thus $\mathfrak{pab}_{i\ai}^{!\mS_i} \cong \ois (-1)^{i-1}$ as modules over $1\wr\Sai$.

The product $\prod_{j=1}^{\ai} S_{i,j}\times \{1\}$ gives a complete set of coset representatives for $S_i \wr \Sai / 1 \wr \Sai$.  Moreover it is clear that (taking $\mathfrak{pab}_{i\ai}^{!\mS_i}$ to be supported on Shift$(\{ 1, \dots, i\ai \})$):
\begin{equation*}
\mathfrak{pvb}_{i\ai}^{!\mS_i} = \bigoplus_{\sigma\in \prod_{j=1}^{\ai} S_{i,j} \times \{1\}} \sigma \cdot \mathfrak{pab}_{i\ai}^{!\mS_i}
\end{equation*}
from which it follows that $\mathfrak{pvb}_{i\ai}^{!\mS_i} = \ind_{1 \wr \Sai}^{S_i \wr \Sai} \ois (-1)^{i-1}$, as required.

\end{proof}

From this Lemma and (\ref{OrbitDecompEqn}) it follows immediately that
\begin{align}
\ind_{\stab(\mSa)}^{S_n} \pvqsa &\cong \ind_{\prod_i S_i \wr S_{\ai}}^{S_n} \ind_{\prod_i 1 \wr S_{\ai}}^{\prod_i S_i \wr S_{\ai}}\otimes_i \ois (-1)^{i-1} \label{IteratedInducedRp} \\
&\cong \ind_{\prod_i 1 \wr S_{\ai}}^{S_n} \otimes_i \ois (-1)^{i-1} \label{InducedRep}
\end{align}
as claimed in the theorem.

\end{proof}

We now give an alternative description of the decomposition of $\Hkpv$ into irreducibles, in terms of plethysm of Frobenius characteristics. To begin with, claim that

\begin{equation*}
\ind_{\prod_i 1 \wr S_{\ai}}^{\prod_i S_i\wr\Sai} \bigotimes_i \ois (-1)^{i-1} \cong \bigotimes_i \mathrm{Reg}_i \wr \ois (-1)^{i-1}
\end{equation*}
where $\mathrm{Reg}_i$ is the regular representation of $S_i$ (and see subsection \ref{WreathProdSubsection} for the definition of the wreath product of modules).  Indeed, if we write $G_i:=S_i\wr\Sai$, $K_i:=\prod_{j=1}^{\ai} S_{i,j} \times \{1\}$ and $H_i:=1 \wr S_{\ai}$, then $K_i$ gives a set of coset representatives for $G_i / H_i$ and:

\begin{align*}
\ind_{H_i}^{G_i} \ois (-1)^{i-1} &\cong G_i \otimes_{H_i} \ois (-1)^{i-1} \\
&\cong K_i \otimes_{H_i} \ois (-1)^{i-1} \\
&\cong (\otimes_{j=1}^{\ai}\ \Q S_{i,j}) \otimes_{H_i} \ois (-1)^{i-1}
\end{align*}
which, as a vector space, has the correct form.  If $u_i$ is basis vector for the module $\ois (-1)^{i-1}$, the action of $(g_1,\dots,g_{\ai},\tau) \in G_i$ on $(h_1,\dots,h_{\ai},1) \otimes_{H_i} u_i \in K_i \otimes_{H_i} \ois (-1)^{i-1}$ is as follows:

\begin{align*}
(g_1,\dots,g_{\ai},\tau) \cdot \big( (h_1,\dots,h_{\ai},1) \otimes_{H_i} u_i \big) &= \big( (g_1,\dots,g_{\ai},\tau) \cdot (h_1,\dots,h_{\ai},1) \big) \otimes_{H_i} u_i \\
&= (g_1 h_{\tau^{-1} 1},\dots, g_{\ai} h_{\tau^{-1} \ai},\tau) \otimes_{H_i} u_i \\
&= \big( (g_1 h_{\tau^{-1} 1},\dots, g_{\ai} h_{\tau^{-1} \ai},1)\cdot (1,\dots,1,\tau) \big) \otimes_{H_i} u_i \\
&= (g_1 h_{\tau^{-1} 1},\dots, g_{\ai} h_{\tau^{-1} \ai},1) \otimes_{H_i} \big( (1,\dots,1,\tau) \cdot u_i \big) \\
&= (g_1 h_{\tau^{-1} 1},\dots, g_{\ai} h_{\tau^{-1} \ai},1) \otimes_{H_i} (-1)^{|\tau|(i-1)} u_i
\end{align*}
from which the claim follows immediately.

Using this claim and Equation (\ref{IteratedInducedRp}), one can then establish the following description of the characteristic $\mathrm{ch} \Hkpv$ of $\Hkpv$:

\begin{theorem}
\label{thm:pvkPlethysm}
\begin{equation*}
\mathrm{ch} \Hkpv = \bigoplus_{\substack{\ua \vdash k \\ l(\ua) \leq n-k}} \prod_t v_{t,a_t} [\mathrm{ch Reg}_{t+1}] \cdot h_{n-k-l(\ua)}
\end{equation*}
where the sum is over partitions $\ua: k=\sum_{t=1}^k ta_t,\ a_t \in \N$, of $k$, with length $l(\ua)$ (= number of parts in $\ua$) no more than $n-k$; and:

\begin{itemize}
\item $h_p=s_{(p)}$ is the Schur function for the partition $p=p$ of $p\in \N$;
\item $v_{t,a_t}:=h_{a_t}$ if $t$ is even, and $v_{t,a_t}:=e_{a_t}$ if $t$ is odd, where $e_p=s_{(1,\dots, 1)}$ is the Schur function for the partition of $p\in \N$ consisting of $p$ ones;
\item $v_{t,a_t}[\mathrm{ch Reg}_{t+1}]$ denotes plethysm of characteristics.
\end{itemize}
\end{theorem}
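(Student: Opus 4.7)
The plan is to apply the Frobenius characteristic to both sides of the decomposition from Theorem \ref{thm:pvqkDecompositionThm}, in the reformulated version
$$\Hkpv \cong \bigoplus_{\substack{\alpha \vdash n \\ l(\alpha)=n-k}} \ind_{\prod_i S_i\wr\Sai}^{S_n} \bigotimes_i \mathrm{Reg}_i \wr \ois (-1)^{i-1}$$
derived in the paragraphs immediately preceding the statement (combining the wreath-product identity $\ind_{\prod_i 1 \wr \Sai}^{\prod_i S_i \wr \Sai} \bigotimes_i \ois(-1)^{i-1} \cong \bigotimes_i \mathrm{Reg}_i \wr \ois(-1)^{i-1}$ with induction in stages).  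The two ingredients I will feed into this are the product formula (\ref{plethysmB}) for the characteristic of an outer tensor product induced from $\prod_i S_{p_i}$ to $S_{\sum_i p_i}$, and the plethysm formula (\ref{plethysm}) for a wreath-induced representation.

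First, I would iterate (\ref{plethysmB}) across the factors indexed by $i$, obtaining
$$\ch \Hkpv = \sum_{\substack{\alpha \vdash n \\ l(\alpha) = n-k}} \prod_{i \geq 1} \ch\left(\ind_{S_i \wr \Sai}^{S_{i\alpha_i}} \mathrm{Reg}_i \wr \ois (-1)^{i-1}\right).$$
I would then invoke (\ref{plethysm}) on each factor, treating $(-1)^{i-1}$ as the $\Sai$-module that is trivial when $i$ is odd and sign when $i$ is even.  This yields $\ch(-1)^{i-1}[\ch \mathrm{Reg}_i]$, where the outer characteristic is $h_{\alpha_i}$ when $i$ is odd and $e_{\alpha_i}$ when $i$ is even.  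The $i=1$ factor collapses to $h_{\alpha_1}$, since $\mathrm{Reg}_1$ is the trivial representation and plethysm with $h_1$ acts as the identity.

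Finally, I would reindex.  Partitions $\alpha \vdash n$ with $l(\alpha)=n-k$ biject with partitions $\ua \vdash k$ having $l(\ua) \leq n-k$ by setting $a_t := \alpha_{t+1}$ for $t \geq 1$: indeed $\sum_t t a_t = \sum_{i\geq 2}(i-1)\alpha_i = n - l(\alpha) = k$, while $\alpha_1 = (n-k) - l(\ua)$.  Writing $t=i-1$ for $i \geq 2$, the parity bookkeeping matches exactly: $\ch(-1)^t$ is $h_{a_t}$ when $t$ is even and $e_{a_t}$ when $t$ is odd, which is precisely the definition of $v_{t,a_t}$.  Collecting everything yields
$$\ch \Hkpv = \sum_{\substack{\ua \vdash k \\ l(\ua) \leq n-k}} h_{n-k-l(\ua)} \cdot \prod_t v_{t,a_t}[\ch \mathrm{Reg}_{t+1}],$$
which is the claimed identity.

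The argument is essentially mechanical once the wreath-product reformulation preceding the theorem is in hand.  The one point that genuinely requires care, and which I expect to be the main source of potential error, is the reindexing $\alpha \leftrightarrow \ua$ together with the parity substitution $t = i-1$ that forces the alternation between $h$ and $e$ in the definition of $v_{t,a_t}$; this has to be tracked precisely in order to match the formula as stated.
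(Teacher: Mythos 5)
Your proposal is correct and follows essentially the same route as the paper: the paper likewise passes to the reformulation $\bigoplus_\alpha \ind_{\prod_i S_i\wr S_{\alpha_i}}^{S_n}\bigotimes_i \mathrm{Reg}_i\wr\omega_i^*(-1)^{i-1}$ and then applies the product formula (\ref{plethysmB}) and the wreath/plethysm formula (\ref{plethysm}), exactly as in its proof of the $\mathfrak{pfb}_n^!$ analogue (Theorem \ref{thm:Plethysm}), to which it defers. The only cosmetic difference is that the paper first splits off the singleton parts of $\alpha$ to produce the factor $h_{n-k-l(\ua)}$ via a separate induction step, whereas you absorb the $i=1$ factor uniformly; your reindexing $t=i-1$ and the parity bookkeeping are handled correctly.
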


The proof is similar to the proof of the corresponding plethystic expression for $\Hkpf$ in Theorem \ref{plethysm}, and so will not be repeated here.  We note that, by using this theorem, it is straightforward to derive the computation of $H^1(\PvB,\Q)$ given in (\ref{pvq1DecompEqn}).

We now establish the representation stability of $\Hkpv$.

\begin{corollary}
\label{pvqRepStabCor}
The cohomology modules $\Hkpv \cong \pvqk$ are uniformly representation stable, for $n\geq 4k$, and $k\geq 1$.
\end{corollary}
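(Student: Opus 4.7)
The plan is to apply Hemmer's theorem (Theorem \ref{HemmerThm}) summand-by-summand to the decomposition of Theorem \ref{thm:pvqkDecompositionThm}, then use the fact that a finite direct sum of uniformly representation stable sequences is itself uniformly representation stable, with stability bound equal to the maximum of the individual bounds.

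First, I would reindex the sum in Theorem \ref{thm:pvqkDecompositionThm}. A partition $\alpha \vdash n$ with $l(\alpha) = n-k$ parts, written in multiplicity notation $\alpha = \sum_i i\alpha_i$, is in bijection with a partition $\ua \vdash k$ having $l(\ua) \leq n-k$ parts, via $a_t := \alpha_{t+1}$ for $t \geq 1$; then $\alpha_1 = (n-k) - l(\ua)$ accounts for the parts of size one. Setting $r := k + l(\ua) = n - \alpha_1$, the $\alpha$-summand rewrites as
\begin{equation*}
\ind_{H_{\ua} \times S_{\alpha_1}}^{S_n} V_{\ua} \otimes \Q,
\end{equation*}
where $H_{\ua} := \prod_{t\geq 1} 1\wr S_{a_t} \leq S_r$ (each $1\wr S_{a_t}$ viewed inside $S_{t+1}\wr S_{a_t}\leq S_{(t+1)a_t}$), the representation $V_{\ua} := \bigotimes_{t\geq 1} \omega_{t+1}^*(-1)^t$ is a representation of $H_{\ua}$, and $V_{\ua}\otimes \Q$ denotes its trivial extension across the $S_{\alpha_1}$ factor. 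Crucially, both $H_{\ua}$ and $V_{\ua}$ depend only on $\ua$, not on $n$; the $n$-dependence lives entirely in the trivial $S_{\alpha_1} = S_{n-r}$ factor.

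Since $r = k + l(\ua) \leq 2k$ (with equality when $\ua = (1^k)$), Hemmer's theorem directly yields that, for each fixed $\ua \vdash k$, the sequence
\begin{equation*}
\bigl\{\ind_{H_{\ua} \times S_{n-r}}^{S_n} V_{\ua} \otimes \Q \bigr\}_{n\geq r}
\end{equation*}
is uniformly representation stable with stability range $n \geq 2r$. Taking the direct sum over the finite set of partitions $\ua\vdash k$, I obtain uniform representation stability for $\pvqk$ with range $n\geq \max_{\ua\vdash k} 2r = 4k$. Note that once $n \geq 2k$, every $\ua\vdash k$ already satisfies $l(\ua) \leq n-k$, so the index set of partitions contributing summands is constant for $n\geq 4k$ and no summands drop out or appear as $n$ grows further.

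The one point requiring care, rather than being handed to Hemmer, is checking that the natural maps $\pvqk \hookrightarrow \mathfrak{pvb}_{n+1}^{!k}$ are compatible with the $\ua$-indexed decomposition, so that the sequence qualifies as consistent in the sense of the definition of uniform representation stability. This is essentially formal: under the standard inclusion $S_n\hookrightarrow S_{n+1}$, a partition $\alpha \vdash n$ with $(n-k)$ parts corresponds to the partition $\alpha' \vdash (n+1)$ with $(n+1-k)$ parts obtained by adjoining one extra part of size 1, which matches $\ua$ to itself while enlarging $S_{\alpha_1}$ to $S_{\alpha_1 + 1}$; this is precisely the map underlying Hemmer's stable sequence. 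I expect this compatibility verification and the identification of each summand with a Hemmer-type induced sequence to be the only substantive content of the proof; the explicit bound $n\geq 4k$ then drops out of the worst case $\ua = (1^k)$.
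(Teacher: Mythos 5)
Your proposal is correct and follows essentially the same route as the paper: both identify each summand of the decomposition in Theorem \ref{thm:pvqkDecompositionThm} as an induced representation of the form $\ind_{H\times S_{n-r}}^{S_n} V\otimes \Q$ with $H$ a subgroup of $S_r$, $r=k+j\leq 2k$ (your $j=l(\ua)$ being the paper's count of parts of $\alpha$ strictly greater than $1$), and then apply Hemmer's theorem to each summand to get the stability range $n\geq 4k$. Your explicit reindexing by partitions $\ua\vdash k$ and the remark on consistency of the maps are slightly more detailed than the paper's sketch, but the substance is the same.
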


\begin{proof}  The proof given in \cite{C-F} with regard to the cohomology of the pure braid groups carries over essentially without change, so we only sketch the proof.

Suppose $\alpha$ is a partition of $n$ into $n-k$ parts, and assume $\alpha$ has $j$ parts (strictly) greater than $1$.  Then we automatically have $j\leq k$, otherwise the parts of $\alpha$ add up to at least $2j+ 1\cdot (n-k-j) = n-k+j >n$, a contradiction.  Hence, in particular, $k+j\leq 2k$.

We have $\prod_i 1\wr\Sai \cong \big( \prod_{2\leq i\leq n} 1\wr\Sai \big) \times S_{n-k-j}$.  The factor $S_{n-k-j}$ just permutes the singleton sets in the partition $\mSa$ of $[n]$, so its action is trivial.  Thus the term corresponding to $\alpha$ in the decomposition (\ref{thm:pvqkDecompositionEqn}) is of the form $\ind_{H\times S_{n-k-j}}^{S_n} V \otimes 1$, where $H=\prod_{2\leq i\leq n} 1\wr\Sai \leq S_n$ and $V$ is a representation of $H$.  Moreover, $k+j\geq 1$.  Therefore we can apply Theorem \ref{HemmerThm}.  We conclude that each summand in (\ref{thm:pvqkDecompositionEqn}) is uniformly representation stable, and stabilizes once $n\geq 2(k+j)$.  Since $k+j\leq 2k$, we conclude that each summand in (\ref{thm:pvqkDecompositionEqn}) stabilizes once $n\geq 4k$.
\end{proof}

\subsection{The Cohomology of $\vB$}
\label{CohomvBSubsec}

In this section we use information about the multiplicity of the trivial representation of $S_n$ in $\pvq$ to compute the rational cohomology of $\vB$.  The general strategy is similar to that used in \cite{C-F} concerning the cohomology of the (ordinary) braid groups, and in \cite{W} concerning the cohomology of the braid-permutation groups $\wB$.

Suppose $G$ and $H$ are groups, with $H$ a normal subgroup of $G$ and $[G:H]<\infty$.  Then the exact sequence

\begin{equation*}
1 \ra H \ra G \ra G/H \ra 1
\end{equation*}
induces a `transfer' homomorphism from which one may deduce an isomorphism (see \cite{B}, Prop. III.10.4):

\begin{equation*}
H^k(G,\Q) \cong H^k(H,\Q)^{G/H}
\end{equation*}
where the RHS is the subspace of $H^k(H,\Q)$ invariant under $G/H$.  One concludes, in particular, that the rank of $H^k(\vB,\Q)$ coincides with the multiplicity of the trivial representation of $S_n$ in $\pvq$.

\newpage

From this we will derive the following theorem:

\begin{theorem}
\label{thm:vBCohomolThm}
For $k,n \in \N$ and $n>k\geq 1$,
\begin{equation*}
H^k(\vB,\Q) \cong \Q^{P(k)}
\end{equation*}
where $P(k)$ is the number of partitions of $k$ into at most $n-k$ parts, in which each odd part (if any) appears just once.

If in fact $n\geq 2k$, then $P(k)$ is the total number of partitions of $k$, in which each odd part (if any) appears just once.
\end{theorem}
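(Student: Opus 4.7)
The plan is to use the transfer isomorphism $H^k(\vB,\Q) \cong H^k(\PvB,\Q)^{S_n} \cong (\pvqk)^{S_n}$ recalled just above the theorem, so the problem reduces to computing the multiplicity of the trivial $S_n$-representation in $\pvqk$. For this I would plug the decomposition of Theorem \ref{thm:pvqkDecompositionThm} into Frobenius reciprocity: the trivial $S_n$-multiplicity in $\ind_{\prod_i 1\wr \Sai}^{S_n} \bigotimes_i \ois (-1)^{i-1}$ equals the dimension of the $\prod_i (1 \wr \Sai)$-invariants in the outer tensor product. Because each tensor factor is one-dimensional and factors through the canonical isomorphism $\oi : 1 \wr \Sai \cong \Sai$, the whole outer tensor product has invariants exactly when every factor $(-1)^{i-1}$ is the trivial representation of $\Sai$. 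For $i$ odd this is automatic, while for $i$ even the alternating representation of $\Sai$ is trivial if and only if $\alpha_i \leq 1$. Hence
\begin{equation*}
\dim (\pvqk)^{S_n} = \#\{\alpha \vdash n : l(\alpha) = n-k,\ \alpha_i \leq 1 \text{ for every even } i\}.
\end{equation*}

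Next I would establish the bijection with the partitions of $k$ named in the statement. Given such an $\alpha$, set $\beta_j := \alpha_{j+1}$ for $j \geq 1$; combinatorially this amounts to subtracting $1$ from every part of $\alpha$ and dropping the parts that become $0$. A direct check yields $\sum_j j \beta_j = \sum_i i \alpha_i - \sum_i \alpha_i = n - (n-k) = k$ and $l(\beta) = (n-k) - \alpha_1 \leq n-k$, while the constraint $\alpha_i \leq 1$ for even $i$ becomes $\beta_j \leq 1$ for odd $j$, i.e.\ each odd part of $\beta$ appears at most once. The inverse sends $\beta$ to $\alpha_i := \beta_{i-1}$ for $i \geq 2$ together with $\alpha_1 := (n-k) - l(\beta)$, which is non-negative precisely thanks to the bound $l(\beta) \leq n-k$. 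This identifies $\dim H^k(\vB,\Q)$ with $P(k)$ as defined in the theorem.

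The final assertion is an immediate consequence: once $n \geq 2k$, any partition of $k$ has at most $k \leq n-k$ parts, so the restriction $l(\beta) \leq n-k$ is vacuous and $P(k)$ collapses to the total number of partitions of $k$ with odd parts appearing at most once. The only non-routine step in this argument is recognizing that the alternating-representation constraint on even-indexed $\alpha_i$ aligns (after the shift $j = i-1$) with restricting the multiplicities of odd parts of $\beta$; once that observation is in hand, the remainder is standard combinatorial bookkeeping, and I do not anticipate any serious obstacle.
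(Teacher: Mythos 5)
Your proposal is correct and follows essentially the same route as the paper: transfer to reduce to the trivial-isotypic multiplicity in $\pvqk$, Frobenius reciprocity applied to the decomposition of Theorem \ref{thm:pvqkDecompositionThm} to isolate the partitions with no repeated even parts, and the subtract-one bijection with partitions of $k$ having no repeated odd parts. The combinatorial bookkeeping (including the $n\geq 2k$ case) matches the paper's argument.
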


We recall\footnote{See A006950, \emph{The On-Line Encyclopedia of Integer Sequences}, http://oeis.org/?language=english.} that a generating function for the number of partitions of $k$ with no repeated odd parts is:

\begin{equation*}
P(z)=\prod_{k>0} \frac{(1+z^{2k-1})}{(1-z^{2k})}
\end{equation*}

\begin{proof}
We determine the multiplicity of the trivial representation in $\pvqk$.

Suppose that $\alpha$ is a partition of $n$ with $n-k$ parts.  By Frobenius reciprocity and equation (\ref{InducedRep}), $\ind_{\stab(\mSa)}^{S_n} \pvqsa$ contains (one copy of) the trivial representation precisely when $\otimes_i \ois (-1)^{i-1}$ is the trivial representation of $\prod_i 1 \wr S_{\ai}$.  In turn it is clear that this holds precisely when $|\Sai|=1$ (that is, when $\ai=1$) whenever $i$ is even.

Then by equation (\ref{SumOfInducedRepsEqn}), the multiplicity of the trivial representation in $\pvqk$ is equal to the number of partitions $\alpha$ of $n$ into $n-k$ parts, with no repeated even parts.

Now suppose $\alpha$ has $j$ parts (strictly) greater than $1$, so that $1\leq j \leq n-k$ (we assume $n>1$); and $\alpha$ has $n-k-j$ parts equal to $1$.

We claim that the partitions of $n$ into $n-k$ parts, with $n-k-j$ parts equal to $1$ and no repeated even parts, are in bijective correspondence with the partitions of $k$ into $j$ parts, with $1\leq j \leq n-k$ and no repeated odd parts.  Indeed, given the former, one obtains the latter by subtracting $1$ from each part.  Conversely, given the latter, one obtains the former by adding $1$ to each part, and then adding a further $n-k-j$ ones.

We can now conclude that, in general, $P(k)$ is the number of partitions of $k$ into at most $n-k$ parts, with no repeated odd parts.  Moreover, if in fact $n\geq 2k$ then the restriction on the number of parts is trivially satisfied, so $P(k)$ is the total number of partitions of $k$ with no repeated odd parts.  This proves the theorem.
\end{proof}

\newpage

The following is half of Theorem \ref{thm:AltMultThm} which was stated in the Introduction (the remainder is deferred to Subsection \ref{pfqActionSubs}):

\begin{theorem}\label{HalfAltMultThm}
For all $n>k\geq 1$,
$\Hkpv^{Alt}$ vanishes for sufficiently large $n$ ($n\geq 2(k+1)$ will do.)
\end{theorem}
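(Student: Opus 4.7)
My plan is to reduce to a combinatorial count via the explicit decomposition of Theorem \ref{thm:pvqkDecompositionThm} and Frobenius reciprocity. The multiplicity of the alternating representation of $S_n$ in the induced module $\ind_{\prod_i 1\wr \Sai}^{S_n} \bigotimes_i \ois(-1)^{i-1}$ equals the multiplicity of $\mathrm{sgn}_{S_n}\big|_H$ in $\bigotimes_i \ois(-1)^{i-1}$ as a representation of $H:=\prod_i 1\wr \Sai$. So the whole question reduces to identifying for which partitions $\alpha$ the tensor product $\bigotimes_i \ois(-1)^{i-1}$ equals (as a one-dimensional representation of $H$) the restricted sign character, and then counting such $\alpha$.

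The first technical step is to compute $\mathrm{sgn}_{S_n}\big|_{1\wr \Sai}$ inside the block $S_{i\ai}\leq S_n$. A cycle of length $\lambda$ in $\tau\in\Sai$ induces $i$ cycles of length $\lambda$ on the $i\ai$ indices permuted by $(1,\dots,1,\tau)$, whence $\mathrm{sgn}_{S_{i\ai}}(1,\dots,1,\tau) = \mathrm{sgn}(\tau)^i$. In the paper's notation this says $\mathrm{sgn}_{S_n}\big|_H = \bigotimes_i \ois(-1)^i$. Now comparing two one-dimensional pullback characters $\ois(-1)^{i-1}$ and $\ois(-1)^i$: they differ by the sign of $\Sai$ pulled back along $\oi$, so they agree if and only if $\Sai$ contains only even permutations, i.e.\ $\ai\leq 1$. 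Hence the multiplicity of the alternating representation in $\Hkpv$ equals the number of partitions $\alpha\vdash n$ with $l(\alpha)=n-k$ and all parts \emph{distinct}.

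The final step is the combinatorial estimate. If $\alpha$ has $n-k$ distinct positive parts, its sum is at least $1+2+\cdots+(n-k)=\frac{(n-k)(n-k+1)}{2}$, so we need
\begin{equation*}
\frac{(n-k)(n-k+1)}{2}\leq n,\qquad\text{equivalently}\qquad (n-k)(n-k-1)\leq 2k.
\end{equation*}
Setting $m:=n-k$, the condition $m(m-1)\leq 2k$ already fails at $m=k+2$, since $(k+2)(k+1)=k^2+3k+2>2k$ for every $k\geq 1$, and the left-hand side is increasing in $m$. Thus as soon as $n-k\geq k+2$, i.e.\ $n\geq 2(k+1)$, no such partition exists, and so every summand in the decomposition of Theorem \ref{thm:pvqkDecompositionThm} has zero alternating multiplicity. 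This gives $\Hkpv^{Alt}=0$ for all $n\geq 2(k+1)$.

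The only step that requires any genuine care is the sign-of-block-permutation computation in the second paragraph; once that is in hand, Frobenius reciprocity and the elementary partition bound do the rest, and the proof is routine.
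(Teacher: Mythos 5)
Your proof is correct. It rests on the same two pillars as the paper's: the decomposition of Theorem \ref{thm:pvqkDecompositionThm} and Frobenius reciprocity, which together reduce the question to deciding, partition by partition, whether $\bigotimes_i \ois(-1)^{i-1}$ coincides with the restriction of $\mathrm{sgn}_{S_n}$ to $\prod_i 1\wr\Sai$. Where you diverge is in the combinatorial core. The paper takes a shortcut: for $n\geq 2(k+1)$ a partition of $n$ into $n-k$ parts must have at least two parts equal to $1$, so $\alpha_1\geq 2$ and the $i=1$ factor $\ois(-1)^{0}$ is trivial rather than the sign of $S_{\alpha_1}$, killing the multiplicity without ever computing $\mathrm{sgn}_{S_n}\big|_{1\wr\Sai}$ for general $i$. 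You instead compute that restriction in full ($\mathrm{sgn}(1,\dots,1,\tau)=\mathrm{sgn}(\tau)^i$, i.e.\ $\ois(-1)^i$), which identifies the contributing summands exactly as the partitions of $n$ into $n-k$ \emph{distinct} parts, and then the bound $\tfrac{(n-k)(n-k+1)}{2}\leq n$ rules these out for $n\geq 2(k+1)$. Your route costs one extra (correct) sign computation but buys a sharper statement: the alternating multiplicity in $\Hkpv$ equals the number of partitions of $n$ into $n-k$ distinct parts for \emph{every} $n>k$, not merely zero in the stated range --- a refinement consistent with, e.g., the single copy of $V(1,1)$ in $H^1(\PvBiii,\Q)$ recorded in (\ref{pvq1DecompEqn}), and one that would also feed directly into Theorem \ref{thm:AltMultThm}.
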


\begin{proof}
Let $\alpha$ be a partition of $n$ with $n-k$ parts.  By Frobenius reciprocity, we find that $\ind_{\stab(\mSa)}^{S_n} \pvqsa$ contains (one copy of) the alternating representation precisely when $\otimes_i \ois (-1)^{i-1}$ is the alternating representation of $\prod_i 1 \wr S_{\ai}$.

However, as soon as $n\geq 2(k+1)$, it is clear that $\alpha$ must have at least two parts equal to $1$, so that $|S_{\alpha_1}|\geq 2$.  But then the representation $\ois (-1)^{i-1}$ for $i=1$ is trivial, not alternating.  Hence by (\ref{thm:pvqkDecompositionEqn}) the multiplicity of the alternating representation in $\pvqk$ is nil.

\end{proof}

\subsection{Graded Characters of $\pvq$ and $\pv$}

\begin{theorem}
\label{thm:PvBTheorem}

1). Let $\sigma\in S_n$ have cycle type corresponding to a `homogeneous' partition of $n$, that is $n=k+ \dots + k$ (with $\alpha_k$ summands), for some $k, \alpha_k \geq 1$.  Then:

\begin{equation*}
\pvqsz =  \mathfrak{pvb}_{\alpha_k}^!((-1)^{k-1}k z^k)
\end{equation*}
where $\mathfrak{pvb}_{\alpha_k}^!(z)$ is the Hilbert series for $\mathfrak{pvb}_{\alpha_k}^!$ given in (\ref{pvqHilbertSeries}).

2).  Now let $\sigma\in S_n$ have cycle type corresponding to a non-homogeneous partition $n=\sum_{i=1}^r i \ai$, with $i,\ai, r \in \N$.  Define $n_i = i \ai$, for $i = 1, \dots, r$, and denote $\mathfrak{pvb}_{n_i,i\ai}^!$ the character (given in 1)) corresponding to the partition $n_i=i + \dots +i$ ($\ai$ summands).  Then:

\begin{equation*}
\pvqz = \prod_{i= 1}^r \mathfrak{pvb}_{n_i,i\ai}^!
\end{equation*}
\end{theorem}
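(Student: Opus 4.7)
The plan is to use the basis $\mB$ of Proposition \ref{BasisPropPvB} together with Definition \ref{CharMonomDef}, and to compute $\pvqsz$ by summing $\chis(\Gamma) z^{\deg \Gamma}$ over the basis elements $\Gamma$ characteristic for $\sigma$. A basis element $\Gamma = \gamma_1 \wedge \cdots \wedge \gamma_N$ is a product of disjoint admissible oriented chains, and $\sigma(\Gamma) = \sigma(\gamma_1) \wedge \cdots \wedge \sigma(\gamma_N)$ is again such a product. Hence $\chis(\Gamma) \neq 0$ iff $\sigma$ permutes the set $\{\gamma_1, \ldots, \gamma_N\}$ of components of $\Gamma$; when this happens $\chis(\Gamma)$ is the Koszul sign of the resulting rearrangement of graded wedge factors.

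The technical heart of the argument is a structural lemma: if $\Gamma$ is characteristic for $\sigma$ and $\gamma$ is a component whose $\sigma$-orbit among components has size $d$, then every vertex in the support $V_\gamma$ has $\sigma$-cycle length equal to $d$. One inclusion: $\sigma^d$ sends the oriented chain $\gamma$ to itself, and since the only automorphism of an oriented chain is the identity, $\sigma^d$ fixes $V_\gamma$ pointwise, so each $\sigma$-cycle length in $V_\gamma$ divides $d$. For the converse, if $v \in V_\gamma$ has cycle length $l$, then $\sigma^l v = v$ lies in both $V_\gamma$ and $V_{\sigma^l \gamma}$; disjointness of the component supports of $\Gamma$ forces $\sigma^l \gamma = \gamma$, and hence $d \mid l$. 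This arithmetic compatibility between component-orbit sizes and $\sigma$-cycle lengths is the main obstacle; everything downstream is routine combinatorial bookkeeping.

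For Part 1, when $\sigma$ has cycle type $k^{\alpha_k}$ the lemma forces every component-orbit to have size exactly $k$ and each component to be a chain whose $m$ vertices come from $m$ distinct $k$-cycles of $\sigma$ (one per cycle). A characteristic $\Gamma$ with $s$ component-orbits thus corresponds to (i) a set partition of the $\alpha_k$ cycles of $\sigma$ into $s$ blocks of sizes $m_1, \ldots, m_s$ (with $\sum_j m_j = \alpha_k$) and (ii) for each block of size $m$, a choice of $\sigma$-orbit of $k$ components; the number of such orbits is $m! \cdot k^{m-1}$ (order the $m$ cycles along the chain, pick one vertex per cycle, quotient by the free $\Z/k$-action of $\sigma$). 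The Koszul sign contributed by one orbit is $(-1)^{(m-1)(k-1)}$, since $\sigma$ cyclically permutes $k$ wedge factors each of degree $m-1$. With $\deg \Gamma = n - sk = k(\alpha_k - s)$, this yields
\[
\pvqsz = \sum_{s=0}^{\alpha_k} (-1)^{(\alpha_k - s)(k-1)}\, k^{\alpha_k - s} \Bigl( \sum_{\Pi:\, s \text{ blocks}} \prod_j m_j! \Bigr) z^{k(\alpha_k - s)}.
\]
The inner sum equals $L(\alpha_k, s)$ by definition of Lah numbers (a set partition $\Pi$ together with an ordering of each block is precisely an unordered partition into ordered subsets), which rewrites $\pvqsz$ as $\mathfrak{pvb}_{\alpha_k}^!((-1)^{k-1} k z^k)$, as claimed.

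For Part 2, the lemma immediately forces every component's support to lie inside a single block $U_i := \{v : v \text{ is in an } i\text{-cycle of } \sigma\}$. Thus every characteristic $\Gamma$ factors canonically as $\Gamma = \Gamma_1 \cdots \Gamma_r$ with $\Gamma_i$ supported on $U_i$, and $\Gamma_i$ is characteristic for the restriction $\sigma|_{U_i}$, which has homogeneous cycle type $i^{\alpha_i}$ on $n_i = i\alpha_i$ letters. Since $\sigma$ respects this block decomposition and never permutes components from different $U_i$, the Koszul sign factorizes: $\chis(\Gamma) = \prod_i \chi_{\sigma|_{U_i}}(\Gamma_i)$. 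Summing over all such factorizations gives $\pvqsz = \prod_{i=1}^r \mathfrak{pvb}_{n_i, i\alpha_i}^!(z)$, as required.
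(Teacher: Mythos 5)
Your proposal is correct and takes essentially the same route as the paper's proof: the same reduction of characteristic basis elements to the homogeneous case via the observation that each component's vertices meet only cycles of a single length, the same count ($k^{\alpha_k-\beta}$ orbits modulo cyclic relabelling, which you organize as set partitions of the cycles with ordered blocks rather than as basis elements of $\mathfrak{pvb}^!_{\alpha_k}$ --- but these are the same objects, whence the Lah numbers), and the same Koszul-sign computation $(-1)^{(\alpha_k-\beta)(k-1)}$. The only cosmetic difference is that the paper reads off $\sum_\beta L(\alpha_k,\beta)z^{\alpha_k-\beta}$ directly from Equation (\ref{pvqHilbertSeries}) while you re-expand the Lah number as $\sum_\Pi\prod_j m_j!$; both arguments are complete.
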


The following is an immediate consequence of the Theorem, using the `Koszul formula' of Theorem \ref{thm:ExtendedKoszulFormulaThm}:

\begin{corollary}
\label{pvTheorem}
The characters $\pvsz$ are given in terms of the $\pvqsz$ by the Koszul formulas:

\begin{equation*}
\pvsz = \frac{1}{\mathfrak{pvb}_{n,\sigma}^{!}(-z)}
\end{equation*}
\end{corollary}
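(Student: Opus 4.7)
The proof plan is essentially a one-line application, since the corollary is explicitly flagged as an immediate consequence of Theorem~\ref{thm:ExtendedKoszulFormulaThm} once we have the character $\pvqsz$ from Theorem~\ref{thm:PvBTheorem}. The only items to verify are that the hypotheses of the extended Koszul formula actually hold in our situation.

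First, I would check the three preconditions of Theorem~\ref{thm:ExtendedKoszulFormulaThm}. Take $G = S_n$, and let $V$ be the $S_n$-representation spanned by the generators $\{r_{ij} : 1 \leq i \neq j \leq n\}$ of $\pv$, with $S_n$ acting by $r_{ij} \mapsto r_{\sigma i, \sigma j}$ as recalled in Subsection on the action of $S_n$. The algebra $\pv$ is quadratic, presented as $TV/\langle R \rangle$ for the submodule $R \subseteq V \otimes V$ given by the commutator relations reviewed earlier, and one checks (as is standard, and implicit in the discussion of the $S_n$-action) that $R$ is an $S_n$-subrepresentation. The key analytic input is that $\pv$ is a Koszul algebra, which is precisely the result of \cite{BEER} recalled at the start of Subsection on the basis, and its quadratic dual is $\pvq$ (cf.\ Subsection~\ref{CohomAlgSubsection}).

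Having verified the hypotheses, Theorem~\ref{thm:ExtendedKoszulFormulaThm} applied to $A = \pv$ and $A^! = \pvq$ gives, for every $\sigma \in S_n$,
\begin{equation*}
\pvsz \cdot \pvqsz\big|_{z \mapsto -z} = 1,
\end{equation*}
that is, $\pvsz \cdot \mathfrak{pvb}_{n,\sigma}^{!}(-z) = 1$. Solving for $\pvsz$ yields the claimed formula
\begin{equation*}
\pvsz = \frac{1}{\mathfrak{pvb}_{n,\sigma}^{!}(-z)}.
\end{equation*}

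There is no real obstacle here: the substantive content lives in Theorem~\ref{thm:ExtendedKoszulFormulaThm} (the graded-character analogue of the standard Koszul duality identity) and in the Koszulity result of \cite{BEER}. Everything else is bookkeeping, namely confirming that the $S_n$-action on $V$ preserves $R$ so that $\pv$ is a genuine graded representation of $S_n$ with an $S_n$-equivariant quadratic dual; both are already noted in the paper. Thus the corollary follows by direct substitution.
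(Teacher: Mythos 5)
Your proposal is correct and matches the paper exactly: the paper gives no separate argument, simply declaring the corollary an immediate consequence of Theorem \ref{thm:ExtendedKoszulFormulaThm} applied to $A=\pv$ with $\Aq=\pvq$, which is precisely what you do (your added verification that $R$ is an $S_n$-submodule and that Koszulity holds, via \cite{BEER} and the standard fact that Koszulity passes between a quadratic algebra and its dual, is harmless extra bookkeeping).
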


Since the character of a representation evaluated at a particular $\sigma\in S_n$ depends only on the conjugacy class to which $\sigma$ belongs, we assume that $\sigma$ may be presented as a product of disjoint cycles $\sigma_1 \sigma_2 \dots \sigma_i \dots$ such that each $\sigma_i$ has length $j_i$ where $j_1\leq j_2\leq \dots$, and may be written $(a_i +1, a_i +2, \dots ,a_i +j_i)$, for some integers $a_l$ such that $a_1<a_2<\dots$.

Recall that in Definition \ref{CharMonomDef}, for any monomial $m$ in the basis $\mB$ for $\pvq$, and for any element $\sigma \in S_n$,  we defined $\chism$ to be the coefficient of $m$ itself in the expansion of $\sigma(m)$ in terms of the basis $\mB$; and we said that $m$ is characteristic for $\sigma$ if $\chism\ne0$.

\begin{proof}[Proof of the Theorem]

Let $m$ be a characteristic monomial for $\sigma$, and in particular a basis element for $\pvq$.  Let $\Gamma$ be the corresponding graph.  We claim that $\sigma \Gamma$ is also a basis element (up to sign).  Indeed, the connected components $\gamma$ of $\Gamma$ must be admissible graphs and have distinct supports, and it is clear (since $\sigma$ is a bijection) that the $\sigma \gamma$ must also be admissible chains of $\sigma \Gamma$ with distinct supports, and the claim follows.

Let $\gamma_1$ be any admissible chain of $\Gamma$.  From the previous paragraph we conclude that $\sigma \Gamma= \pm \Gamma$.  Hence either $\sigma$ acts as the identity on $\gamma_1$ or $\sigma \gamma_1 = \gamma_2$ for some admissible chain $\gamma_2\ne \gamma_1$ of $\Gamma$.  In either case, we see that in fact there must exist some integer $l\geq 1$ and distinct admissible chains $\gamma_1, \dots, \gamma_l$ in $\Gamma$ such that $\sigma \gamma_1 = \gamma_2, \dots, \sigma \gamma_l = \gamma_1$.

Suppose $\gamma_1$ (and hence $\gamma_j,\ j=1, \dots, l$) have length $d$.  It is clear that for each $t=1, \dots, d$, the vertices in position $t$ in $\gamma_1, \dots, \gamma_l$ must comprise a single $l$-cycle in $\sigma$.  (Note in particular that no two vertices from the same admissible chain belong to the same cycle of $\sigma$.)  The picture is as follows:

\[
\xy
(-4,-15)*{} = "2";
(-6,-15)*{\scriptstyle{i_1}};
(-4,-10)*{} = "3";
(-4,-5)*{} = "35";
(-4,0)*{} = "4";
(-6,0)*{\scriptstyle{i_t}};
(-4,5)*{} = "5";
(-4,10)*{} = "6";
(-6,10)*{\scriptstyle{i_d}};
{\ar@{->} "2"; "3"};
{\ar@{.} "3"; "35"};
{\ar@{->} "35"; "4"};
{\ar@{.} "4"; "5"};
{\ar@{->} "5"; "6"};
(8,-15)*{} = "12";
(6,-15)*{\scriptstyle{j_1}};
(8,-10)*{} = "13";
(8,-5)*{} = "135";
(8,0)*{} = "14";
(6,0)*{\scriptstyle{j_t}};
(8,5)*{} = "15";
(8,10)*{} = "16";
(6,10)*{\scriptstyle{j_d}};
{\ar@{->} "12"; "13"};
{\ar@{.} "13"; "135"};
{\ar@{->} "135"; "14"};
{\ar@{.} "14"; "15"};
{\ar@{->} "15"; "16"};
(20,0)*{\dots};
(35,-15)*{} = "22";
(33,-15)*{\scriptstyle{k_1}};
(35,-10)*{} = "23";
(35,-5)*{} = "235";
(35,0)*{} = "24";
(33,0)*{\scriptstyle{k_t}};
(35,5)*{} = "25";
(35,10)*{} = "26";
(33,10)*{\scriptstyle{k_d}};
{\ar@{->} "22"; "23"};
{\ar@{.} "23"; "235"};
{\ar@{->} "235"; "24"};
{\ar@{.} "24"; "25"};
{\ar@{->} "25"; "26"};
{\ar@{~>} (-2,0); (4,0)};
{\ar@{~>} (9,0); (15,0)};
{\ar@{~>} (25,0); (31,0)}
\endxy
\]

So we immediately get:

\begin{proposition}
\label{GammaDecomposition}
Let $\sigma$ have cycle type corresponding to some partition $n=\sum_{1\leq i \leq r} i \alpha_i$.  Then $\Gamma$ decomposes into components (not necessarily connected) $\Gamma_1, \dots, \Gamma_r$ such that the $i$-cycles of $\sigma$ permute the chains in $\Gamma_i$ (not necessarily transitively) but are the identity on $\Gamma_j$ for $j\ne i$.
\end{proposition}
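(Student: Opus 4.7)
The plan is to partition the admissible chains of $\Gamma$ according to the length of their $\sigma$-orbit and then read off the action of the individual cycles of $\sigma$. Concretely, for each $i$ with $1\leq i\leq r$, let $\Gamma_i$ be the subgraph of $\Gamma$ consisting of the union of all $\sigma$-orbits of admissible chains of $\Gamma$ whose orbit has length exactly $i$. Since distinct admissible chains of $\Gamma$ have disjoint supports, the $\Gamma_i$ have disjoint vertex sets and $\Gamma=\Gamma_1\sqcup\cdots\sqcup\Gamma_r$.

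Next I invoke the discussion immediately preceding the statement: if $\gamma_1$ is an admissible chain of length $d$ lying in an orbit $\{\gamma_1,\dots,\gamma_l\}$ of size $l$ under $\sigma$, then for each position $t\in\{1,\dots,d\}$ the $l$ vertices of $\gamma_1,\dots,\gamma_l$ in position $t$ form a single $l$-cycle of $\sigma$ (and no two vertices in the same chain share a cycle). In particular, every vertex occurring in $\Gamma_i$ lies in some $i$-cycle of $\sigma$, and conversely $i$-cycles of $\sigma$ only move vertices that belong to $\Gamma_i$. This is the core observation; the rest is bookkeeping.

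From this observation both halves of the claim follow. First, for $j\neq i$, every $j$-cycle of $\sigma$ moves only vertices lying in $\Gamma_j$, hence fixes every vertex of $\Gamma_i$ pointwise and therefore acts as the identity on $\Gamma_i$. Second, restricted to the vertices of $\Gamma_i$, the permutation $\sigma$ is the product of precisely those $i$-cycles of $\sigma$ whose support lies in $\Gamma_i$; by construction this restriction permutes the admissible chains within each orbit of $\Gamma_i$ cyclically (and leaves distinct orbits untouched, so the action is not in general transitive on $\Gamma_i$).

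I expect no serious obstacle here: the substantive content has already been extracted in the paragraphs preceding the proposition, where it is shown that $\sigma\Gamma=\pm\Gamma$ forces $\sigma$ to permute admissible chains in uniform-length orbits and forces the vertices in each chain-position to form a single cycle of $\sigma$ of matching length. The only thing to verify carefully is that grouping chains by orbit length genuinely partitions both the vertex set and the edge set of $\Gamma$ in a way compatible with the $\sigma$-action, which follows from the disjointness of supports of admissible chains in a basis monomial (Lemma~\ref{CharMonLemma}).
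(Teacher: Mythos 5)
Your proof is correct and follows essentially the same route as the paper, which offers no separate argument for this proposition but derives it directly (``So we immediately get'') from the same preceding discussion you invoke: $\sigma\Gamma=\pm\Gamma$ forces the admissible chains into $\sigma$-orbits, and the position-$t$ vertices across an orbit of length $l$ form a single $l$-cycle of $\sigma$. Your grouping of chains by orbit length and the disjoint-support bookkeeping via Lemma~\ref{CharMonLemma} is exactly the intended (omitted) verification.
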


The following corollary is clear:

\begin{corollary}
\begin{equation*}
\chi_{\sigma}(\Gamma)=\prod_{i=1}^r \chi_{\sigma}(\Gamma_i)
\end{equation*}
\end{corollary}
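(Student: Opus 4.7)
The plan is to use the multiplicative structure of the basis $\mathcal{B}$ together with the decomposition provided by Proposition \ref{GammaDecomposition}. Since each $\Gamma_i$ is a collection of admissible chains from $\Gamma$ with pairwise disjoint supports (and the supports of $\Gamma_i$ and $\Gamma_j$ are disjoint whenever $i\ne j$, because each chain's vertices lie in cycles of a single length), the product $\Gamma_1 \wedge \Gamma_2 \wedge \cdots \wedge \Gamma_r$ is itself a basis element of $\pvq$ by Proposition \ref{BasisPropPvB}. First I would write
\begin{equation*}
\Gamma = \epsilon \cdot \Gamma_1 \wedge \Gamma_2 \wedge \cdots \wedge \Gamma_r,
\end{equation*}
where $\epsilon = \pm 1$ is the sign that rearranges the chains of $\Gamma$ from root-increasing order (which defines $\Gamma$ as an element of $\mathcal{B}$) into the block form grouping together all chains assigned to $\Gamma_i$ for each $i$.

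Next, since by Proposition \ref{GammaDecomposition} the $i$-cycles of $\sigma$ permute the chains of $\Gamma_i$ and act as the identity on each $\Gamma_j$ for $j\ne i$, and since these cycles act on disjoint vertex sets, the action of $\sigma$ respects the block factorization:
\begin{equation*}
\sigma(\Gamma) = \epsilon \cdot \sigma(\Gamma_1) \wedge \sigma(\Gamma_2) \wedge \cdots \wedge \sigma(\Gamma_r).
\end{equation*}
Each $\sigma(\Gamma_i)$ is again a product of admissible chains with the same supports as in $\Gamma_i$ (just reordered and with each chain possibly replaced by another chain on the same vertex set), so by the argument already given in the proof of the theorem, $\sigma(\Gamma_i) = \pm \Gamma_i$ as basis elements, and the $\pm$ is precisely $\chi_\sigma(\Gamma_i)$ by definition.

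Substituting back and using that $\epsilon \cdot \Gamma_1 \wedge \cdots \wedge \Gamma_r = \Gamma$ once more,
\begin{equation*}
\sigma(\Gamma) = \epsilon \cdot \prod_{i=1}^r \chi_\sigma(\Gamma_i) \cdot \Gamma_1 \wedge \cdots \wedge \Gamma_r = \Big( \prod_{i=1}^r \chi_\sigma(\Gamma_i) \Big) \cdot \Gamma,
\end{equation*}
which yields the claimed factorization. The main thing to be careful about is that the reordering sign $\epsilon$ is the same before and after applying $\sigma$; this holds because $\sigma$ never moves a chain from one $\Gamma_i$ to a different $\Gamma_j$, so the block structure and hence the sign $\epsilon$ are invariant under $\sigma$. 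This sign-tracking is the only real subtlety, and it disappears once one notes this invariance.
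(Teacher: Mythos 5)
Your argument is correct and is exactly the sign-tracking the paper has in mind: the paper states this corollary with no proof at all ("The following corollary is clear"), relying implicitly on the fact that $\sigma$ preserves the block decomposition of Proposition \ref{GammaDecomposition} so that the reordering sign between root-increasing order and block order is the same before and after applying $\sigma$. Your write-up just makes that omitted bookkeeping explicit, so it matches the intended approach.
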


Thus we have reduced the problem to the case of $\sigma$ corresponding to  a homogeneous partition (with $\alpha_k$ cycles of size $k$, say).  Consider the vertex $1$, which lies in the cycle $(1 2 \dots k)$.  We label the chain to which $1$ belongs by $\gamma_1$.  From the previous discussion, the vertices of $\gamma_1$ must consist of either $0$ or $1$ vertex from each cycle of $\sigma$.  Once it is known which vertices lie in $\gamma_1$, then the vertices of $\gamma_2:=\sigma \gamma_1, \dots, \gamma_k:=\sigma \gamma_{k-1}$ are determined.

If there are chains in $\Gamma$ beyond $\gamma_1, \dots, \gamma_k$, we consider the remaining chain $\delta_1$ with the smallest vertex; as before,  the vertices of $\delta_1$ must consist of either $0$ or $1$ vertex from each cycle of $\sigma$ which is as yet unaccounted for.  Once it is known which vertices lie in $\delta_1$, there must be remaining chains $\delta_2, \dots, \delta_k$ in $\Gamma$, distinct from the $\gj$, such that $\delta_2:=\sigma \delta_1, \dots, \delta_k:=\sigma \delta_{k-1}$.

We repeat this process until all chains of $\Gamma$ are accounted for.  This leads us to the following proposition.

\begin{proposition}
\label{NuFromMu}
For every basis element $\mu$ in $\mathfrak{pvb}^!_{\alpha_k}$ (where $\alpha_k$ is the number of $k$-cycles in $\sigma$) with some number $\beta$ of components, we get $k^{\alpha_k-\beta}$ distinct possible characteristic monomials $\Gamma$ for $\sigma$, by replacing each vertex $i$ of $\mu$ by a choice of any element in the $i$-th cycle of $\sigma$ (thus producing a basis element $\nu$ in $\pvq$), and then setting $\Gamma=\nu_1 \dots \nu_k$ with $\nu_i:=\sigma^{(i-1)}\nu$.

Moreover, all characteristic monomials of $\Gamma$ arise this way.
\end{proposition}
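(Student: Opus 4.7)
The plan is to establish the proposition by direct construction and analysis of the redundancies. Specifically, I would prove that: (i) every pair $(\mu, \nu)$ in the construction yields a well-defined characteristic monomial $\Gamma$; (ii) the $k^{\alpha_k}$ vertex-labellings group into equivalence classes of size $k^\beta$ under independent cyclic shifts on the chain-representatives, each class producing a single $\Gamma$ up to sign; and (iii) every characteristic $\Gamma$ arises from some $(\mu, \nu)$.

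For (i), the chains of $\mu$ partition the index set $\{1,\ldots,\alpha_k\}$, so after vertex-substitution the chains of $\nu$ have pairwise disjoint supports in $[n]$, using at most one vertex from each $k$-cycle of $\sigma$; this makes $\nu$ a basis element of $\pvq$. Because each $k$-cycle moves every one of its elements, the iterates $\nu,\sigma\nu,\ldots,\sigma^{k-1}\nu$ have pairwise disjoint supports as well, so $\Gamma = \nu\w\sigma\nu\w\ldots\w\sigma^{k-1}\nu$ is itself a basis element. Finally, $\sigma$ cyclically permutes the factors $\nu_i := \sigma^{i-1}\nu$, so $\sigma\Gamma = \pm\Gamma$ and $\chi_\sigma(\Gamma) = \pm 1 \neq 0$.

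For (ii), decompose $\nu = \nu_{(1)} \w \ldots \w \nu_{(\beta)}$ into its admissible-chain factors. Reordering the wedge,
\[
\Gamma \;=\; \bigwedge_{i=0}^{k-1} \sigma^i\nu \;=\; \pm \bigwedge_{t=1}^{\beta} \Bigl(\nu_{(t)}\w\sigma\nu_{(t)}\w\ldots\w\sigma^{k-1}\nu_{(t)}\Bigr),
\]
so that independently replacing $\nu_{(t)}$ by $\sigma^{j_t}\nu_{(t)}$ only cyclically permutes the factors inside the $t$-th bracketed wedge and leaves $\Gamma$ invariant up to sign. Conversely, if two lifts of $\mu$ yield the same $\Gamma$, the chain-representatives of one must agree with those of the other up to independent cyclic shifts within each chain, which is precisely the equivalence just described; this confirms the count $k^{\alpha_k}/k^\beta = k^{\alpha_k - \beta}$. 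For (iii), given a characteristic $\Gamma$, the discussion preceding Proposition \ref{GammaDecomposition} shows that the chains of $\Gamma$ partition into $\sigma$-orbits of length exactly $k$, each orbit using a disjoint subset of cycles of $\sigma$ with one vertex per cycle; picking one representative per orbit yields $\nu$, and collapsing each specific vertex to the label of its cycle produces the desired $\mu \in \mathfrak{pvb}^!_{\alpha_k}$, whose chain structure automatically satisfies the basis-element conditions.

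The main obstacle is (ii): showing that the over-counting factor is exactly $k^\beta$ with no further coincidences. One must verify carefully that independent cyclic shifts on different chains generate \emph{all} the equivalences among the $k^{\alpha_k}$ lifts, and that the wedge-product sign ambiguities do not spuriously identify distinct basis elements of $\pvq$.
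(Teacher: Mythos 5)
Your proof is correct and follows essentially the same route as the paper's (much terser) argument: count the $k^{\alpha_k}$ lifts, show that the only identifications come from replacing a chain $\nu_{(t)}$ by $\sigma^{j_t}\nu_{(t)}$ independently in each of the $\beta$ components (a free action, so divide by $k^{\beta}$), and recover surjectivity from the earlier orbit analysis of the chains of a characteristic monomial. The extra care you take with the converse direction of (ii) and with checking that the $\sigma$-iterates have disjoint supports is exactly the detail the paper leaves implicit.
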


\begin{proof}
Indeed, there are initially $k^{\alpha_k}$ ways to pick $\nu$, but since cyclic relabelings of the $\nu_1 \dots \nu_k$ correspond to the same graph, we need to divide by a factor of $k$ in respect of each component in $\nu$.  This results in $k^{\alpha_k-\beta}$ distinct choices.  The fact that all characteristic monomials arise this way follows from the discussion immediately preceding Proposition \ref{GammaDecomposition}.
\end{proof}

\begin{lemma}
\label{CharMonDegSign}
Each characteristic monomial $\Gamma$ constructed in Proposition \ref{NuFromMu} satisfies
\begin{equation*}
\sigma\Gamma = (-1)^{(\alpha_k-\beta)(k-1)} \Gamma
\end{equation*}
and has degree $(\alpha_k-\beta)k$.
\end{lemma}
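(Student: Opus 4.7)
My plan is to unwind the definition of $\Gamma$ from Proposition \ref{NuFromMu} and then carry out a direct computation in the exterior algebra $\pvq$. Write $\nu_i := \sigma^{i-1}\nu$, so that $\Gamma = \nu_1 \w \nu_2 \w \dots \w \nu_k$.

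For the degree, I would observe that $\nu$ is obtained from the basis element $\mu \in \mathfrak{pvb}^!_{\alpha_k}$ by relabeling each of the $\alpha_k$ vertices into one element of the corresponding $k$-cycle of $\sigma$. Relabeling preserves the graph structure of $\mu$ (in particular its number of edges), so $\nu$ is a product of $\beta$ admissible chains whose total vertex count is $\alpha_k$; hence by Lemma \ref{CharMonLemma} the degree of $\nu$ is $\alpha_k - \beta$. The same holds for each $\nu_i = \sigma^{i-1}\nu$ since $\sigma$ acts by algebra automorphism. Wedging $k$ factors of degree $\alpha_k-\beta$ gives $\deg \Gamma = (\alpha_k-\beta)k$.

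For the sign, since the $S_n$-action is by algebra automorphism we have
\begin{equation*}
\sigma\Gamma = (\sigma\nu_1) \w (\sigma\nu_2) \w \dots \w (\sigma\nu_k) = \nu_2 \w \nu_3 \w \dots \w \nu_k \w (\sigma^k \nu).
\end{equation*}
Here I would use the key observation that the support of $\nu$ is contained in the union of the $\alpha_k$ $k$-cycles of $\sigma$, on which $\sigma^k$ is the identity, so $\sigma^k\nu = \nu = \nu_1$. Thus $\sigma\Gamma = \nu_2\w\nu_3\w\dots\w\nu_k\w\nu_1$, and bringing $\nu_1$ back to the front requires $k-1$ successive Koszul swaps of factors of degree $\alpha_k - \beta$. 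Each swap contributes a sign $(-1)^{(\alpha_k-\beta)^2}$, so the total is
\begin{equation*}
\sigma\Gamma = (-1)^{(k-1)(\alpha_k-\beta)^2}\,\Gamma = (-1)^{(k-1)(\alpha_k-\beta)}\,\Gamma,
\end{equation*}
using $(\alpha_k-\beta)^2 \equiv \alpha_k-\beta \pmod 2$.

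The plan is essentially a routine exterior-algebra computation; the only subtle point is ensuring that $\sigma^k$ acts trivially on $\nu$, for which one just needs to verify that the support of $\nu$ lies entirely inside the cycles of $\sigma$ of length $k$ — and this is precisely how $\nu$ was constructed in Proposition \ref{NuFromMu}. I do not anticipate any genuine obstacle beyond this bookkeeping.
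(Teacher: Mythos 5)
Your proposal is correct and follows essentially the same route as the paper: compute the degree of each $\nu_i$ from Lemma \ref{CharMonLemma}, note that $\sigma$ cyclically permutes the factors $\nu_1,\dots,\nu_k$, and collect the Koszul sign $(-1)^{(k-1)(\alpha_k-\beta)^2}=(-1)^{(k-1)(\alpha_k-\beta)}$ from restoring the cyclic order. Your explicit verification that $\sigma^k\nu=\nu$ (because the support of $\nu$ lies in the $k$-cycles of $\sigma$) is a point the paper leaves implicit, but the argument is otherwise identical.
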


\begin{proof}
Indeed (using the notation from Proposition \ref{NuFromMu}) if $\mu$ has $\beta$ components, then $\mu$ and $\nu$ have degree $(\alpha_k-\beta)$,\footnote{See Lemma \ref{CharMonLemma}.} so $\Gamma$ has degree $(\alpha_k-\beta)k$, as stated.  Also,

\begin{align*}
\sigma\Gamma= \sigma\nu_1 \dots \sigma\nu_k = \nu_2 \dots \nu_k \nu_1 &= (-1)^{\sum_{j=2}^k |\nu_j| |\nu_1|} \nu_1 \dots \nu_k \\
&= (-1)^{(\alpha_k-\beta)^2(k-1)} \nu_1 \dots \nu_k \\
&= (-1)^{(\alpha_k-\beta)(k-1)} \nu_1 \dots \nu_k
\end{align*}
(where $|m|$ is the degree of a monomial $m$; and where we have conflated $\Gamma$ and the $\nu_i$ with the basis elements that they determine, and thus view the above as an equation involving monomials in the anti-symmetric algebra $\pvq$) as required.
\end{proof}

Recall from (\ref{pvqHilbertSeries}) that
\begin{equation*}
\mathfrak{pvb}^!_{\alpha_k}(z)=\sum_{0\leq \beta \leq \alpha_k} L(\alpha_k,\beta) z^{(\alpha_k-\beta)}
\end{equation*}

From Proposition \ref{NuFromMu} and Lemma \ref{CharMonDegSign}, we now see that

\begin{align*}
\mathfrak{pvb}_{n,\sigma}^!(z) &=\sum_{0\leq \beta \leq \alpha_k} L(\alpha_k,\beta) (-1)^{(\alpha_k-\beta)(k-1)} k^{(\alpha_k-\beta)} z^{(\alpha_k-\beta)k} \\
&= \mathfrak{pvb}_{\alpha_k}^!((-1)^{(k-1)} k z^k)
\end{align*}
This completes the proof of the theorem.
\end{proof}

\section{Algebras Related to the Pure Flat Braid Groups}

\subsection{Basis}

Monomials in $\pfq$ may be represented by graphs on the vertex set $[n]:=\{1, \dots , n\}$, with generators $\rij$ being represented by a directed edge or arrow from $i$ to $j$, whenever $i<j$.  As with $\pvq$, a given graph specifies a unique monomial up to sign.

In \cite{BEER} it was shown that $\pfq$ is Koszul, by exhibiting a quadratic Gr\"obner basis for $\pfq$ as an exterior algebra.  They also gave a basis for $\pfq$ itself.

We recall the terminology used in \cite{BEER}.  A monomial in $\pfq$ is called reduced if it has the form $r_{i_1i_2}\w r_{i_1i_3}\w \dots \w r_{i_1i_m}$ with $i_1<i_2< \dots <i_m$.  The set $\{i_1,i_2, \dots ,i_m\}$ is called the support of the monomial, and $i_1$ is its root.  The following is Proposition 4.2 of \cite{BEER}:

\begin{proposition}
Products of reduced monomials with disjoint supports (in the order of increasing roots) form a basis for $\pfq$.
\end{proposition}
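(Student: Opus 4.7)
The plan is the standard two-step basis argument: establish spanning using the defining relations, then establish linear independence via a Gr\"obner basis computation (the route taken in \cite{BEER}).

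For spanning, I would use the relations (\ref{pfbRelations}), which for $i<j<k$ read $\rij \w \rik = \rij \w \rjk = \rik \w \rjk$, to normalize any wedge product of generators.  Given such a monomial, consider the underlying graph $\Gamma$ on $[n]$; within each connected component of $\Gamma$ with vertex set $\{i_1 < \dots < i_m\}$, any edge not incident to $i_1$ can be replaced, by repeated application of triangle relations along a path to $i_1$, by an edge having $i_1$ as its smaller endpoint.  Iterating this move and using anti-commutativity of $\w$ to re-order components by increasing roots expresses any monomial as a linear combination of products of reduced monomials with disjoint supports.

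For linear independence, the plan is to endow the free exterior algebra on $\{\rij : i<j\}$ with a suitable term order --- for example, the degree-lexicographic order induced by $\rij < r_{kl}$ iff $(i,j) < (k,l)$ lexicographically --- and verify that the relations (\ref{pfbRelations}), viewed with leading monomials $\rij \w \rjk$ and $\rik \w \rjk$ (for $i<j<k$), form a quadratic Gr\"obner basis.  The standard monomials with respect to this basis are then precisely the wedges containing no sub-pattern $\rij \w \rjk$ or $\rik \w \rjk$ with $i<j<k$, which translates graph-theoretically into the condition that every connected component has all its edges sharing the component's minimum vertex as a common endpoint, i.e., is a reduced star.  Combined with the spanning step this yields the asserted basis.

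The main obstacle will be the Gr\"obner basis verification itself --- checking that all $S$-polynomial reductions close up to zero in the exterior-algebra setting, where signs and anti-commutativity require some care.  A back-up plan that sidesteps this verification is to count products of reduced monomials with disjoint supports by degree and match against the Hilbert series for $\pfq$ established in \cite{BEER}; this converts the independence question into a combinatorial identity, though the spanning step continues to do most of the work.
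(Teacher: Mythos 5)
The paper does not prove this proposition itself---it quotes it as Proposition 4.2 of \cite{BEER} and notes that \cite{BEER} establishes it (together with Koszulity) by exhibiting a quadratic Gr\"obner basis for $\pfq$ as an exterior algebra, which is precisely your primary plan; your spanning argument and your identification of the leading monomials $\rij\w\rjk$ and $\rik\w\rjk$ (for $i<j<k$) and of the standard monomials as disjoint stars rooted at component minima are correct. One caution: the backup plan of matching a count against the Hilbert series of \cite{BEER} is circular, since that Hilbert series is itself derived from this basis (equivalently from the same Gr\"obner basis), so the $S$-polynomial verification cannot actually be sidestepped that way.
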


We will use a related but slightly different basis for $\pfq$.  Specifically, we will say that a monomial whose graph is a directed chain with indices increasing in the direction of the arrows, is an admissible monomial (and the related graph is an admissible graph).  In terms of the generators, these monomials have the form $r_{i_1i_2}\w r_{i_2i_3}\w \dots \w r_{i_{m-1}i_m}$ with $i_1<i_2< \dots <i_m$.  The set $\{i_1,i_2, \dots ,i_m\}$ is again called the support of the monomial, and $i_1$ the root.  It is easy to see that each subset of $[n]$ determines exactly one reduced monomial and one admissible monomial, and that these are equal modulo the relations, up to sign (we consider that singletons and the empty set determine $1\in\pvq$).  We therefore have:

\begin{proposition}
\label{BasisProp}
Products of admissible monomials with disjoint supports (in the order of increasing roots) form a basis $\mB$ for $\pfq$.
\end{proposition}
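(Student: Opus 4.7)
The plan is to leverage the basis given in \cite{BEER} (Proposition 4.2, recalled just above) and show that products of admissible monomials with disjoint supports are, in $\pfq$, equal (up to sign) to the corresponding products of reduced monomials over the same partition of supports. Since both kinds of monomial, on a given support $\{i_1<\dots<i_m\}\subseteq [n]$, have the same root $i_1$, the ordering ``by increasing roots'' is identical in the two cases; so if the factor-wise equalities hold, the resulting products of admissibles are, up to signs, just a relabelling of the BEER basis, hence a basis themselves.

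The heart of the argument is therefore the single-support statement: for each subset $\{i_1<i_2<\dots<i_m\}\subseteq [n]$, one has
\begin{equation*}
r_{i_1 i_2}\w r_{i_2 i_3}\w \dots \w r_{i_{m-1} i_m} \;=\; r_{i_1 i_2}\w r_{i_1 i_3}\w \dots \w r_{i_1 i_m}
\end{equation*}
in $\pfq$. I would prove this by a short induction on $m$, whose inductive step converts the admissible form into the reduced form one factor at a time using only the first line of the relations (\ref{pfbRelations}). Concretely, having already rewritten the initial segment as $r_{i_1 i_2}\w r_{i_1 i_3}\w \dots \w r_{i_1 i_t}\w r_{i_t i_{t+1}}\w\dots$, one applies the relation $r_{i_1 i_t}\w r_{i_1 i_{t+1}}=r_{i_1 i_t}\w r_{i_t i_{t+1}}$ (read right-to-left, valid in the ideal-quotient and thus usable inside a longer wedge product) to replace $r_{i_t i_{t+1}}$ by $r_{i_1 i_{t+1}}$, and then iterates. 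After $m-2$ such substitutions one reaches the reduced form.

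Because the argument never invokes anti-commutation or the second line of (\ref{pfbRelations}), no genuine signs are introduced at this step — the equality above holds on the nose. Once the single-support equality is in hand, the passage from the BEER basis to $\mB$ is purely formal: given any partition of a subset of $[n]$ into blocks of size $\geq 2$, the corresponding admissible-monomial product equals the corresponding reduced-monomial product (both ordered by increasing roots), while singleton and empty blocks contribute $1\in \pfq$ either way. The main (and essentially only) obstacle is organizing the iterative substitution bookkeeping cleanly; but since each step only touches two adjacent factors and leaves everything to their right untouched, this is routine.
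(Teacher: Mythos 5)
Your proposal is correct and follows essentially the same route as the paper, which simply observes that each subset of $[n]$ determines one reduced and one admissible monomial and that these agree modulo the relations (up to sign), then transports the basis of \cite{BEER}. Your explicit induction using the relation $\rij\w\rik=\rij\w\rjk$ (for $i<j<k$) just fills in the detail the paper leaves as ``easy to see,'' and correctly notes that no sign actually arises.
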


We will find it convenient to use the basis induced from admissible monomials, rather than reduced monomials.

\newpage

Lemma \ref{CharMonLemma}, originally stated for $\pvq$, still applies for $\pfq$, and we repeat (and slightly extend) it here for convenience.

\begin{lemma}
\label{BasisPropPfB}
Any collection $\Gamma$ of admissible graphs with disjoint supports determines a unique basis element in $\mB$, namely the product of the corresponding monomials, ordered by increasing roots.  If the union of the supports of $\Gamma$ has cardinality $\alpha$ and $\Gamma$ has $\beta$ components, the degree of the basis element of $\pfq$ determined by $\Gamma$ is $(\alpha - \beta)$.

It follows, in particular, that each partition of $[n]$ determines a unique (and distinct) basis element of $\mB$, since each subset of $[n]$ determines a unique admissible chain.
\end{lemma}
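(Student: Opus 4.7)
The plan is to deduce this lemma essentially as an unpacking of Proposition \ref{BasisProp}, together with two small bookkeeping observations. First, I would verify that the phrase ``ordered by increasing roots'' makes unambiguous sense for any collection $\Gamma$ of disjoint admissible graphs. Since the components of $\Gamma$ have pairwise disjoint supports inside $[n]$, their roots (the smallest vertex of each chain) are distinct integers, hence admit a unique strictly increasing arrangement. Proposition \ref{BasisProp} then declares the product of the corresponding admissible monomials, taken in that canonical order, to be an element of $\mB$; and this basis element is unique because reordering the factors of a wedge product is only a matter of signs, so any other ordering gives (at most) the same basis element up to sign.

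For the degree assertion, the key observation is purely combinatorial. An admissible monomial $r_{i_1 i_2} \w r_{i_2 i_3} \w \dots \w r_{i_{m-1} i_m}$ on a support of size $m$ is a wedge product of exactly $m-1$ generators, so it has degree $m-1$ in the exterior algebra $\pfq$. If the components of $\Gamma$ have support sizes $m_1, \dots, m_\beta$ with $\sum_{j=1}^\beta m_j = \alpha$, then the degree of the associated basis element is
\[
\sum_{j=1}^\beta (m_j - 1) = \alpha - \beta,
\]
as claimed.

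For the final ``in particular'' clause, I would point out that each subset $S = \{i_1 < i_2 < \dots < i_m\} \subseteq [n]$ determines a \emph{unique} admissible chain, namely $r_{i_1 i_2} \w r_{i_2 i_3} \w \dots \w r_{i_{m-1} i_m}$, because admissibility forces the chain to traverse the vertices of $S$ in their natural increasing order. Hence any set-partition of $[n]$ produces a canonical collection of disjoint admissible graphs, which by the first part of the lemma yields a unique basis element of $\mB$. That distinct partitions yield distinct basis elements is immediate from the fact that the partition can be recovered from the basis element as the collection of vertex sets of its connected components.

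No serious obstacle is anticipated: the content of the lemma is essentially a clean restatement of Proposition \ref{BasisProp} with a degree count attached. The only points requiring care are the well-definedness of the canonical ordering and the uniqueness of the admissible chain on a given support — both of which follow directly from the definitions.
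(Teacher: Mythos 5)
Your proof is correct and fills in exactly the details the paper leaves implicit (the paper simply declares the lemma straightforward and carries it over from the $\pvq$ case). The three observations you isolate — distinctness of roots making the ordering canonical, the degree count $\sum_j(m_j-1)=\alpha-\beta$, and the fact that admissibility in $\pfq$ forces the unique increasing chain on each support — are precisely what is needed.
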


In light of the lemma, for notational simplicity we often conflate such a $\Gamma$ and the basis element it determines.

\subsection{$\pmb{S_n}$ Representation on Top Degree Component of $\pfq$}

We can very easily determine the $S_n$ representation given by the top degree component of $\pfq$.  Indeed, this component is generated by the unique admissible monomial on the full set $[n]$.  It therefore has degree $(n-1)$ and dimension 1.  In fact:

\begin{proposition}
\label{TopDegPfBProp}
The top degree component $\mathfrak{pfb}_n^{!(n-1)}$ of $\pfq$ is the alternating representation of $S_n$.
\end{proposition}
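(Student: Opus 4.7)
The plan is to use the observation, made just before the proposition, that $\mathfrak{pfb}_n^{!(n-1)}$ is one-dimensional, spanned by the single admissible monomial $\Gamma := r_{12}\w r_{23}\w\cdots\w r_{n-1,n}$ on all of $[n]$. The case $n=1$ is vacuous since $\mathrm{Alt}_1$ is trivial. For $n\geq 2$, the only one-dimensional representations of $S_n$ are the trivial and the alternating ones, so it suffices to show that a single adjacent transposition $s_i=(i,i+1)$ acts by $-1$ on $\Gamma$.

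The first step is a direct computation of $s_i\Gamma$. The transposition $s_i$ fixes each factor of $\Gamma$ not supported on $\{i,i+1\}$, and sends the (at most three) factors $r_{i-1,i}$, $r_{i,i+1}$, $r_{i+1,i+2}$ to $r_{i-1,i+1}$, $r_{i+1,i}=-r_{i,i+1}$, $r_{i,i+2}$, respectively, with the obvious modification at the endpoints $i=1$ and $i=n-1$ where one flanking factor does not exist. Extracting the sign from the anti-symmetry $r_{i+1,i}=-r_{i,i+1}$, the claim $s_i\Gamma=-\Gamma$ reduces to verifying the identity
\[
r_{i-1,i+1}\w r_{i,i+1}\w r_{i,i+2} \;=\; r_{i-1,i}\w r_{i,i+1}\w r_{i+1,i+2}
\]
in $\pfq$ (with the corresponding two-factor version at the endpoints).

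The second step applies the defining relations (\ref{pfbRelations}) twice: the relation $r_{ik}\w r_{jk}=r_{ij}\w r_{jk}$ at indices $(i-1,i,i+1)$ replaces $r_{i-1,i+1}\w r_{i,i+1}$ by $r_{i-1,i}\w r_{i,i+1}$, and the relation $r_{ij}\w r_{ik}=r_{ij}\w r_{jk}$ at indices $(i,i+1,i+2)$ replaces $r_{i,i+1}\w r_{i,i+2}$ by $r_{i,i+1}\w r_{i+1,i+2}$. Chaining these yields $s_i\Gamma=-\Gamma$. Since adjacent transpositions generate $S_n$, the one-dimensional module $\mathfrak{pfb}_n^{!(n-1)}$ must be $\mathrm{Alt}_n$. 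The only possible obstacle is bookkeeping of the lone minus sign, but because neither relation application reorders factors in the exterior algebra and the result is again a single admissible chain, no Koszul-sign juggling is required.
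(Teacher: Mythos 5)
Your proof is correct and follows essentially the same route as the paper: both exploit that $\mathfrak{pfb}_n^{!(n-1)}$ is one-dimensional, apply a transposition to the unique admissible chain, extract the sign $-1$ from $r_{i+1,i}=-r_{i,i+1}$, and use the relations (\ref{pfbRelations}) to restore the admissible form without further sign. The only difference is that the paper computes with the single transposition $(12)$ while you handle a general adjacent transposition $s_i$, which is slightly more than is needed.
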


\begin{proof}  Indeed, the element $(12)$ of $S_n$ acts as follows, if $n>2$:

\begin{align*}
(12)\ r_{12}\w r_{23}\w \dots \w r_{(m-1)m} &= r_{21}\w r_{13}\w \dots \w r_{(m-1)m} \\
&= -r_{12}\w r_{13}\w \dots \w r_{(m-1)m} \\
&= -r_{12}\w r_{23}\w \dots \w r_{(m-1)m}
\end{align*}
(where we used the relations (\ref{pfbRelations})), and $(12)\ r_{12}=r_{21}=-r_{12}$ if $n=2$.  Thus $\mathfrak{pfb}_n^{!(n-1)}$ is not the trivial representation, and so (being 1-dimensional) must be the alternating representation.
\end{proof}

The following is an easy corollary:

\begin{corollary}
\label{pfqSignsCor}
Let $S\subseteq [n]$ and let $\gamma$ be the admissible chain in $\pvq$ on the set $S$.  Let $T\subseteq S$ be any subset and $\tau$ be a permutation of $T$. Then:

\begin{equation*}
\tau \gamma = sgn(\tau) \gamma
\end{equation*}
where $sgn(\tau)$ is the sign of $\tau$.
\end{corollary}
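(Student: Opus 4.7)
The plan is to reduce the statement to the top-degree case already handled by Proposition \ref{TopDegPfBProp}. First, since $\tau$ fixes every vertex of $S \setminus T$, I would extend $\tau$ by the identity on $S \setminus T$ to obtain a permutation $\tilde{\tau}$ of $S$. This extension does not change the sign (adding fixed points does not affect parity), nor does it change how $\tau$ acts on $\gamma$, since all vertices involved in $\gamma$ lie in $S$. So it suffices to prove the claim when $T = S$.

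Next I would observe that $\gamma$ is a wedge product of generators $r_{ij}$ with $i, j \in S$, and likewise $\tilde{\tau} \gamma$ only involves generators with both indices in $S$. The subalgebra of $\pfq$ generated by $\{r_{ij} : i, j \in S,\ i \neq j\}$, together with the relations inherited from (\ref{pfbRelations}), is canonically isomorphic (via any order-preserving bijection $S \cong [|S|]$) to $\mathfrak{pfb}_{|S|}^!$. This isomorphism is equivariant for the natural action of $S_S$ (the symmetric group on $S$, sitting inside $S_n$ as the stabilizer of $[n] \setminus S$) after its identification with $S_{|S|}$, and sends $\gamma$ to the admissible chain $r_{12} \wedge r_{23} \wedge \cdots \wedge r_{(|S|-1)|S|}$ on the full vertex set $[|S|]$.

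Under this identification, $\gamma$ is precisely the generator of the one-dimensional top-degree component $\mathfrak{pfb}_{|S|}^{!(|S|-1)}$. By Proposition \ref{TopDegPfBProp}, this top-degree component is the alternating representation of $S_{|S|}$, so $\tilde{\tau} \gamma = \mathrm{sgn}(\tilde{\tau})\,\gamma = \mathrm{sgn}(\tau)\,\gamma$, as desired.

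The only potentially delicate point is the identification of the subalgebra with $\mathfrak{pfb}_{|S|}^!$; but since the defining relations of $\pfq$ involve only triples of distinct indices and the relations among generators indexed by $S$ close up among themselves, this identification is immediate from the presentation. Hence I do not expect any serious obstacle.
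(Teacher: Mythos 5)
Your proof is correct and follows the same route the paper intends: the result is stated as an easy corollary of Proposition \ref{TopDegPfBProp}, and your reduction (extend $\tau$ by the identity, restrict to the subalgebra on $S$, and invoke the top-degree alternating representation) is exactly the filling-in of that deduction. The one point worth tightening is that the identification of the subalgebra on $S$ with $\mathfrak{pfb}_{|S|}^{!}$ is justified not merely by the relations closing up but by the basis of Proposition \ref{BasisProp} (basis elements supported in $S$ match those of $\mathfrak{pfb}_{|S|}^{!}$), which rules out extra relations.
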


\subsection{The Action of $S_n$ on $\pfq$}
\label{pfqActionSubs}

In this subsection we prove the following decomposition of $\Hkpf$ into irreducible $S_n$-modules:

\begin{theorem}
\label{thm:pfqkDecompositionThm}
As an $S_n$-module, $\Hkpf$ has the following decomposition:\footnote{See Subsection \ref{WreathProdSubsection} for notation related to wreath products.}
\begin{equation}
\label{pfqkDecompositionEqn}
\Hkpf \cong \bigoplus_{\substack{\alpha \vdash n \\ l(\alpha)=n-k}} \ind_{\prod_i S_i\wr S_{\alpha_i}}^{S_n} \bigotimes_i \ \mathrm{Alt}_i\wr \ois (-1)^{i-1}
\end{equation}
where the direct sum is over partitions $\alpha= \sum_{i=1}^{n} i \alpha_i$ of $n$, with $\alpha_i$ non-negative, and with length $l(\alpha)=n-k$.  Also:

\begin{enumerate}
\item The $S_i\wr \Sai$ are understood as subgroups of $S_{i\ai}$ in the obvious way, and $S_{i\ai}$ is viewed as the subgroup of $S_n$ which permutes the indices $\{1+\sum_{j=1}^{i-1} j \alpha_j, \cdots, i\ai +\sum_{j=1}^{i-1} j \alpha_j\}$;
    \item $\text{Alt}_i$ is the alternating representation of $S_i$;
\item For each $i=1,\dots,n$, $\ois (-1)^{i-1}$ is the pullback of the representation $(-1)^{i-1}$ of $\Sai$ along the projection $\oi: S_i\wr \Sai \thra \Sai$, which is given by $(\sigma_1, \dots, \sigma_{\ai},\tau) \mapsto \tau$, for $\sigma_j\in S_i$ and $\tau\in\Sai$;
\item $\text{Alt}_i\wr \ois (-1)^{i-1}$ is the wreath product of representations (see Subsection \ref{WreathProdSubsection} for the definition); and
\item $\bigotimes_i$ is the outer product of representations.
\end{enumerate}
\end{theorem}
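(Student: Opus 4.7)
The plan is to mirror the proof of Theorem \ref{thm:pvqkDecompositionThm}, exploiting the structural simplification that in $\pfq$ each subset of $[n]$ determines a \emph{unique} admissible chain (there are no orientation choices, since $\rij+\rji=0$). First I would use the basis $\mB$ of Proposition \ref{BasisProp} and Lemma \ref{BasisPropPfB} to note that each basis element induces a partition of $[n]$ via the supports of its connected components. For any partition $\mS$ of $[n]$ into $n-k$ subsets, let $\pfqs$ be the span of the basis elements inducing $\mS$. Since $S_n$ relabels vertices while preserving block sizes, the $S_n$-orbit of $\pfqs$ is the sum of the $\pfqsp$ for partitions $\mS'$ of $[n]$ with $\overline{\mS'}=\bmS$. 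Grouping orbits by the partition $\alpha:=\bmS$ of $n$ yields
\begin{equation*}
\pfqk = \bigoplus_{\substack{\alpha \vdash n \\ l(\alpha)=n-k}} \ind_{\stab(\mSa)}^{S_n} \pfqsa,
\end{equation*}
where $\mSa$ is the canonical shape-$\alpha$ partition defined in (\ref{ShiftedSets}) and $\stab(\mSa)=\prod_i S_i\wr \Sai$.

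Next, since the basis elements of $\pfqsa$ are products of admissible chains, one per block, $\pfqsa$ decomposes as an outer tensor product $\bigotimes_i \mathfrak{pfb}_{i\ai}^{!\mS_i}$ over the factors $S_i\wr \Sai$. The heart of the proof is then the single-size identification
\begin{equation*}
\mathfrak{pfb}_{i\ai}^{!\mS_i} \cong \mathrm{Alt}_i \wr \ois (-1)^{i-1}
\end{equation*}
as $S_i\wr \Sai$-modules. Because each subset of $[i\ai]$ determines a unique admissible chain, the left-hand side is one-dimensional, spanned by $e:= e_1 \wedge e_2 \wedge \cdots \wedge e_{\ai}$, with $e_j$ the admissible chain on the $j$-th block of $\mS_i$; the right-hand side is also one-dimensional. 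The identification thus reduces to checking that $(\sigma_1,\dots,\sigma_{\ai},\tau)\in S_i\wr\Sai$ acts by the same scalar on both sides. On the left, Corollary \ref{pfqSignsCor} gives that each $\sigma_j$ contributes $\mathrm{sgn}(\sigma_j)$; because the block embeddings of Subsection \ref{WreathProdSubsection} are order-preserving translations, $\tau\cdot e_j = e_{\tau(j)}$ with no further sign, and rearranging the resulting product back into canonical (increasing-root) order costs $(-1)^{(i-1)|\tau|}$ since each $e_j$ has degree $i-1$. On the right, unwinding the wreath-product definitions of Subsection \ref{WreathProdSubsection} and using that $\mathrm{Alt}_i$ is one-dimensional gives the same scalar $\prod_j \mathrm{sgn}(\sigma_j)\cdot (-1)^{(i-1)|\tau|}$.

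Assembling these ingredients produces $\ind_{\stab(\mSa)}^{S_n}\pfqsa \cong \ind_{\prod_i S_i\wr\Sai}^{S_n}\bigotimes_i \mathrm{Alt}_i\wr \ois(-1)^{i-1}$, and summing over $\alpha$ gives the theorem. The main obstacle I anticipate is the sign bookkeeping in the central lemma: one must track three interacting sign contributions (the alternating action of each internal $\sigma_j$, the super-commutativity cost of rearranging odd-degree chains in the exterior algebra, and the character $(-1)^{i-1}$ pulled back from $\Sai$) and verify that their product exactly matches the action coming from the formal definition of $\mathrm{Alt}_i\wr \ois(-1)^{i-1}$. None of this is deep, but it demands care with the order-of-operations conventions in Subsection \ref{WreathProdSubsection}.
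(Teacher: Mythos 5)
Your proposal is correct and follows essentially the same route as the paper: the same orbit decomposition $\pfqk = \bigoplus_{\alpha} \ind_{\stab(\mSa)}^{S_n}\pfqsa$ with $\stab(\mSa)=\prod_i S_i\wr\Sai$, the same outer-tensor factorization of $\pfqsa$, and the same one-dimensional identification of each factor $\mathfrak{pfb}_{i\ai}^{!\mS_i}$ with the representation on which $(\sigma_1,\dots,\sigma_{\ai},\tau)$ acts by $(-1)^{\sum_j|\sigma_j|+(i-1)|\tau|}$ (which the paper calls $U_i$ and then identifies with $\mathrm{Alt}_i\wr\ois(-1)^{i-1}$). Your sign bookkeeping — alternating action of each internal $\sigma_j$ via Corollary \ref{pfqSignsCor} plus the $(-1)^{(i-1)|\tau|}$ cost of restoring increasing-root order — is exactly the paper's computation.
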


We note that each component $\mathrm{Alt}_i\wr \ois (-1)^{i-1}$ can alternatively be described as the representation $U_i$, defined as the 1-dimensional representation of $S_i\wr \Sai$ with basis $\{u_i\}$, such that:

\begin{equation*}
(\sigma_1, \dots,\sigma_{\ai},\tau)\cdot (u_i) = (-1)^{\sum_i |\sigma_i| +(i-1)|\tau|} u_i
\end{equation*}
where $|\nu|$ is the parity of any permutation $\nu$.

As in the case of $\pvq$, we have a decomposition

\begin{align}
\pfqk &= \bigoplus_{\substack{\alpha \vdash n \\ l(\alpha)=n-k}} \bigoplus_{\mS: \bmS = \alpha} \pfqs \notag \\
&=  \bigoplus_{\substack{\alpha \vdash n \\ l(\alpha)=n-k}} \ind_{\stab(\mSa)}^{S_n} \pfqsa \notag \\
&=  \bigoplus_{\substack{\alpha \vdash n \\ l(\alpha)=n-k}} \ind_{\prod_i S_i\wr\Sai}^{S_n} \pfqsa \label{pfqOrbitDecompEqn}
\end{align}
with the same notation as in (\ref{SumOfInducedRepsEqn}). The proof is identical to the proof of (\ref{SumOfInducedRepsEqn}) and (\ref{OrbitDecompEqn}), up to obvious name changes, so we do not repeat it.

We now use the fact that, as a representation of the product $\stab(\mSa) \cong \prod_i S_i\wr\Sai$, the space $\pfqsa$ is clearly isomorphic to an outer tensor product

\begin{equation}
\label{pfqTensorProd}
\pfqsat = \mathfrak{pfb}_{1\alpha_1}^{!\mS_1} \otimes \mathfrak{pfb}_{2\alpha_2}^{!\mS_2} \otimes \dots \otimes \mathfrak{pfb}_{n\alpha_n}^{!\mS_n}
\end{equation}
where the $\mS_i$ were defined in (\ref{ShiftedSets}), and we again take the spaces  $\mathfrak{pfb}_{i\alpha_i}^{!}$ to be supported on the sets $\{1, \dots, i\ai\}$ shifted up by $\sum_{l=1}^{i-1} l\alpha_l$.

We need the following lemma:

\begin{lemma}
As modules over $S_i\wr\Sai$,

\begin{equation*}
\mathfrak{pfb}_{i\alpha_i}^{!\mS_i} \cong U_i
\end{equation*}
\end{lemma}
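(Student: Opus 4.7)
The plan is to exploit the key structural difference between $\pfq$ and $\pvq$: by Lemma \ref{BasisPropPfB}, each subset of $[n]$ determines a \emph{unique} admissible chain in $\pfq$. Consequently the partition $\mS_i$ picks out a one-dimensional subspace $\mathfrak{pfb}_{i\alpha_i}^{!\mS_i} = \Q\, u_i$, where $u_i := \gamma_1 \w \gamma_2 \w \cdots \w \gamma_{\alpha_i}$ and $\gamma_k$ is the admissible chain on the $k$-th block of $\mS_i$ (listed by increasing roots). Since $U_i$ is also one-dimensional, it suffices to verify that an arbitrary element $(\sigma_1, \ldots, \sigma_{\alpha_i}, \tau) \in S_i \wr S_{\alpha_i}$ acts on $u_i$ by the scalar $(-1)^{\sum_k |\sigma_k| + (i-1)|\tau|}$ defining $U_i$.

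I would then factor such an element as $(\sigma_1, \ldots, \sigma_{\alpha_i}, 1) \cdot (1, \ldots, 1, \tau)$ and analyze the two factors separately. For the diagonal factor, each $\sigma_k$ permutes the $k$-th block alone, so by Corollary \ref{pfqSignsCor} we have $\sigma_k \gamma_k = \mathrm{sgn}(\sigma_k)\, \gamma_k$ while the other chains are fixed as oriented admissible chains; hence $(\sigma_1, \ldots, \sigma_{\alpha_i}, 1) \cdot u_i = (-1)^{\sum_k |\sigma_k|}\, u_i$.

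The substantive part is the action of $(1, \ldots, 1, \tau)$. Viewed as a permutation in $S_{i\alpha_i}$ via the wreath-product embedding described in Subsection \ref{WreathProdSubsection}, this element carries the $k$-th block of $\mS_i$ order-isomorphically onto the $\tau(k)$-th block, and therefore $\tau\gamma_k = \gamma_{\tau(k)}$. Thus $\tau \cdot u_i$ is the wedge of the \emph{same} admissible chains but in the permuted order $\gamma_{\tau(1)} \w \cdots \w \gamma_{\tau(\alpha_i)}$, and all that remains is to compute the sign of rearrangement back to the canonical order of $u_i$. Since each $\gamma_k$ has degree $i-1$ ($i$ vertices joined by $i-1$ arrows), swapping two adjacent factors in the exterior algebra costs a sign $(-1)^{(i-1)^2} = (-1)^{i-1}$, and expressing $\tau$ as a product of adjacent transpositions then yields the total sign $(-1)^{(i-1)|\tau|}$. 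Combining with the diagonal factor produces exactly the scalar defining $U_i$, which completes the proof.

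There is no genuine obstacle here; the only mildly delicate point is the sign bookkeeping, which reduces (via $(i-1)^2 \equiv i-1 \pmod 2$) to a function of the parity of $\tau$ alone. The real work has already been done in Lemma \ref{BasisPropPfB}: the uniqueness of the admissible chain on a given subset is what collapses the representation to one dimension and trivializes the analogue of the more elaborate argument used for the corresponding lemma in the $\pvq$ case.
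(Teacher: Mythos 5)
Your proposal is correct and follows essentially the same route as the paper: identify the space as one-dimensional via the uniqueness of admissible chains (Lemma \ref{BasisPropPfB}), obtain the sign $(-1)^{\sum_j|\sigma_j|}$ from the alternating action on each connected component (Proposition \ref{TopDegPfBProp} / Corollary \ref{pfqSignsCor}), and obtain $(-1)^{(i-1)|\tau|}$ from reordering the permuted degree-$(i-1)$ chains back to increasing roots. Your version simply spells out the sign bookkeeping for the reordering step a bit more explicitly than the paper does.
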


\begin{proof}[Proof of Lemma]
Each space $\mathfrak{pfb}_{i\ai}^{!\mS_i}$ is one-dimensional and is generated by the (unique) basis graph corresponding to the partition $\mS_i$ (see Lemma \ref{BasisPropPfB}).  Suppose the element $(\sigma_1, \dots,\sigma_{\ai},\tau) \in S_i\wr \Sai$ acts on this basis graph.  We have already seen that on each connected component of this graph, $S_i$ acts as the alternating representation (confer Proposition \ref{TopDegPfBProp}).  Hence we get the factor $\sum_{j=1}^{\ai} (-1)^{|\sigma_j|}$.  Moreover, the effect of the $\tau$ is to permute the connected components, so the connected components no longer appear in the order of increasing roots; bringing them back into order (with increasing roots) induces the factor $(-1)^{(i-1)|\tau|}$. The overall factor is then $(-1)^{\sum_i |\sigma_i| + (i-1)|\tau|}$ as required.
\end{proof}

From this Lemma, and Equations (\ref{pfqTensorProd}) and (\ref{pfqOrbitDecompEqn}), it follows immediately that
\begin{equation*}
\pfqk \cong \ind_{\prod_i S_i\wr S_{\alpha_i}}^{S_n} \bigotimes_i U_i = \ind_{S_i \wr S_{\ai}}^{S_{n}} \bigotimes_i \mathrm{Alt}_i\wr \ois (-1)^{i-1}
\end{equation*}
as required.

\begin{corollary}
\label{pfqRepStabCor}
The cohomology modules $\Hipf \cong \pfqi$ are uniformly representation stable, for $n\geq 4i$.
\end{corollary}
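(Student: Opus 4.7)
The plan is to mirror the proof of Corollary \ref{pvqRepStabCor} for the $\pvq$ case, using the decomposition (\ref{pfqkDecompositionEqn}) from Theorem \ref{thm:pfqkDecompositionThm} in place of (\ref{thm:pvqkDecompositionEqn}), and invoking Hemmer's Theorem \ref{HemmerThm}. The only new point to verify is that the $i=1$ factor in each summand again contributes a trivial representation on a large symmetric group, so that the induced representations fit exactly into the form $\ind_{H \times S_{n-r}}^{S_n} V \otimes \Q$ required by Hemmer.

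First I would fix a partition $\alpha \vdash n$ with $l(\alpha) = n-k$ parts, and let $j$ denote the number of parts strictly greater than $1$. If $\alpha_1 = n-k-j$ is the number of parts equal to $1$, the inequality $2j + (n-k-j) \leq n$ gives $j \leq k$, hence $k+j \leq 2k$. Next I would isolate the singleton factor in the stabilizer:
\begin{equation*}
\prod_i S_i \wr \Sai \ \cong\ \Big(\prod_{i \geq 2} S_i \wr \Sai\Big) \times S_{n-k-j},
\end{equation*}
where $S_{n-k-j}$ permutes the $\alpha_1$ singleton blocks. Since $\mathrm{Alt}_1$ is trivial and $(-1)^{1-1}$ is trivial, the $i=1$ tensor factor $\mathrm{Alt}_1 \wr \omega_1^*(-1)^0$ is exactly the trivial representation of $S_{n-k-j}$. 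Thus the summand of (\ref{pfqkDecompositionEqn}) indexed by $\alpha$ takes the form $\ind_{H \times S_{n-k-j}}^{S_n} (V \otimes \Q)$, where $H := \prod_{i \geq 2} S_i \wr \Sai$ is naturally a subgroup of $S_{k+j}$ and $V := \bigotimes_{i \geq 2} \mathrm{Alt}_i \wr \omega_i^*(-1)^{i-1}$ is a representation of $H$.

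Finally I would apply Hemmer's Theorem \ref{HemmerThm} with $r = k+j$ to conclude that each individual summand is uniformly representation stable and that its decomposition stabilizes once $n \geq 2(k+j)$. Taking the uniform bound $2(k+j) \leq 4k$ over all eligible partitions $\alpha$ yields representation stability of the whole sum $\pfqk$ for $n \geq 4k$. The injectivity and surjectivity conditions in the definition of uniform representation stability follow in exactly the same way as in \cite{C-F}, since $\ind_{H \times S_{n-r}}^{S_n} V \otimes \Q$ is the $S_n$-span of $V \otimes \Q$ inside $\ind_{H \times S_{n-r+1}}^{S_{n+1}} V \otimes \Q$.

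There is no real obstacle here; the argument is structurally identical to the $\pvq$ case, and the only small check — that the $S_{\alpha_1}$ factor acts trivially on the $i=1$ component of $\bigotimes_i \mathrm{Alt}_i \wr \omega_i^*(-1)^{i-1}$ — is immediate because both $\mathrm{Alt}_1$ and $(-1)^0$ are trivial.
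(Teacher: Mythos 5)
Your proposal is correct and follows exactly the route the paper intends: the paper's own "proof" of this corollary simply states that the argument of Corollary \ref{pvqRepStabCor} carries over unchanged, and your adaptation — isolating the $i=1$ factor $\mathrm{Alt}_1\wr\omega_1^*(-1)^0$ as the trivial representation of $S_{n-k-j}$, bounding $k+j\leq 2k$, and applying Hemmer's Theorem \ref{HemmerThm} summand by summand — is precisely that transfer. No gaps.
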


As with Corollary \ref{pvqRepStabCor}, the proof given in \cite{C-F} with regard to the cohomology of the pure braid groups carries over essentially without change, and we refer the reader back to Corollary \ref{pvqRepStabCor} for a sketch of the proof.

We end this subsection by showing that, in each degree $i$, the alternating representation has the same multiplicity in $\pfq$ as in $\pvq$(by Theorem \ref{HalfAltMultThm} we know that this multiplicity is nil for large enough $n$).

\newpage

\begin{theorem}
\label{thm:AltMultThm}
For all $n>1$ and $i\geq 1$,
\begin{equation*}
\Hipv^{Alt} \cong \Hipf^{Alt}
\end{equation*}
and both vanish for sufficiently large $n$ ($n\geq 2(i+1)$ will do.)
\end{theorem}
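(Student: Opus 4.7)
The plan is to compute the multiplicity of the alternating representation $\mathrm{Alt}$ in both sides term-by-term using the decompositions in Theorems \ref{thm:pvqkDecompositionThm} and \ref{thm:pfqkDecompositionThm} combined with Frobenius reciprocity, and to show the two counts agree summand-by-summand. Since the $\mathrm{Alt}$-isotypic component is just a direct sum of copies of $\mathrm{Alt}$, equality of multiplicity yields the desired isomorphism $\Hipv^{Alt} \cong \Hipf^{Alt}$.

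Fix a partition $\alpha \vdash n$ with $l(\alpha) = n-i$. The key preliminary step is to restrict the sign character of $S_n$ to the two stabilizer subgroups appearing in the decompositions. For this I would observe that an element $\tau \in \Sai$, embedded into $S_{i\ai} \leq S_n$ as a permutation of $\ai$ consecutive blocks of size $i$, decomposes each $\ell$-cycle of $\tau$ into $i$ disjoint $\ell$-cycles of $S_n$; hence its sign as a permutation of $[n]$ equals $\mathrm{sgn}(\tau)^i$. Consequently the sign character of $S_n$ restricts on $1 \wr \Sai$ to $\ois((-1)^i)$, and on $S_i \wr \Sai$ to the character sending $(\sigma_1,\dots,\sigma_\ai,\tau)$ to $\prod_j \mathrm{sgn}(\sigma_j)\cdot \mathrm{sgn}(\tau)^i$.

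For $\Hipv$, Frobenius reciprocity then shows that the contribution of $\alpha$ to the multiplicity of $\mathrm{Alt}$ equals $\prod_i \langle \ois(-1)^{i-1},\ois((-1)^i)\rangle_{1\wr\Sai}$. These are one-dimensional characters, so each factor is $1$ if $(-1)^{i-1}=(-1)^i$ on $\Sai$ and $0$ otherwise; agreement forces $\Sai$ to be trivial, i.e.\ $\ai \leq 1$. For $\Hipf$, using the one-dimensional description of $\mathrm{Alt}_i \wr \ois(-1)^{i-1}$ recorded after Theorem \ref{thm:pfqkDecompositionThm} as $(\sigma_1,\dots,\sigma_\ai,\tau)\mapsto (-1)^{\sum_j |\sigma_j|+(i-1)|\tau|}$, the analogous Frobenius computation compares this character with the restricted sign computed above; agreement requires $\mathrm{sgn}(\tau)^{i-1}=\mathrm{sgn}(\tau)^i$ for every $\tau\in\Sai$, which again forces $\ai\leq 1$. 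Therefore, in both decompositions, the multiplicity of $\mathrm{Alt}$ equals the number of partitions of $n$ with exactly $n-i$ parts all of which are distinct.

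This common count establishes $\Hipv^{Alt} \cong \Hipf^{Alt}$. The vanishing for $n \geq 2(i+1)$ is already known for $\Hipv$ by Theorem \ref{HalfAltMultThm} and transfers through the isomorphism; alternatively, one notes directly that a partition of $n$ with $n-i$ distinct positive parts has sum at least $1+2+\cdots+(n-i)$, and for $n \geq 2(i+1)$ the constraint forces the part $1$ to be repeated, so there is no such distinct-parts partition. The only delicate step in the plan is pinning down the sign of the embedding $\Sai \hookrightarrow S_{i\ai}$; once that factor of $i$ in the exponent is in hand, the rest is routine character bookkeeping.
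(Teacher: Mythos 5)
Your proposal is correct and follows essentially the same route as the paper: Frobenius reciprocity applied summand-by-summand to the two induced decompositions of Theorems \ref{thm:pvqkDecompositionThm} and \ref{thm:pfqkDecompositionThm}, reducing everything to a comparison of one-dimensional characters (the paper phrases the $\PfB$ side as a reduction to the $\PvB$ side via the factorization $S_i\wr\Sai=(\prod_j S_{i,j}\times\{1\})\cdot(1\wr\Sai)$ rather than writing out both restrictions of the sign character, but the content, including your key observation that $\tau\in\Sai$ embeds in $S_{i\ai}$ with sign $\mathrm{sgn}(\tau)^i$, is the same). As a small bonus your version also identifies the common multiplicity explicitly as the number of partitions of $n$ into $n-i$ distinct parts, which the paper does not state.
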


\begin{proof}
By Frobenius reciprocity, the multiplicity of the alternating representation (for $S_n$) in $\ind_{\prod_i S_i\wr S_{\alpha_i}}^{S_n} \bigotimes_i \ \mathrm{Alt}_i\wr \ois (-1)^{i-1}$ is the multiplicity of the alternating representation (for $\prod_i S_i\wr S_{\alpha_i}$) in $\bigotimes_i U_i = \bigotimes_i \ \mathrm{Alt}_i\wr \ois (-1)^{i-1}$.

We can write the semi-direct product $S_i\wr S_{\alpha_i}$ as a product of subgroups, $S_i\wr S_{\alpha_i} = (\prod_{j=1}^{\ai} S_{i,j} \times \{1\}) \cdot (1\wr \Sai)$, with $(\prod_{j=1}^{\ai} S_{i,j} \times \{1\}) \cap (1\wr \Sai) = \{1\}$.  Since the sign function is clearly multiplicative over this product, and the representation $U_i$ restricted to $(\prod_{j=1}^{\ai} S_{i,j} \times \{1\})$ is by definition the alternating representation, it follows that we need only determine the multiplicity of the alternating representation (for $\prod_i 1\wr \Sai$) in $\bigotimes_i \ \ois (-1)^{i-1}$.

But, as per the proof of Theorem \ref{HalfAltMultThm}, this latter is just the multiplicity of the alternating representation (for $S_n$) in $\ind_{\prod_i 1\wr S_{\alpha_i}}^{S_n} \bigotimes_i \ \ois (-1)^{i-1}$, and the claimed isomorphism follows.  The fact that $\Hipv^{Alt}$ vanishes for $n\geq 2(i+1)$ was proved in Theorem \ref{HalfAltMultThm}.
\end{proof}

Using the complex (\ref{CohomExactSequence}) (and the fact that all maps in the complex commute with the action of $S_n$), we obtain the following as a corollary (which was originally derived, by different means, in \cite{H}):

\begin{theorem}
\label{thm:PBAltMultThm}
For all $n>k\geq 1$,
\begin{equation*}
\Hkpb^{Alt} =0
\end{equation*}
\end{theorem}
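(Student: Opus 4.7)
The plan is to apply the alternating projector $(-)^{Alt}$ to the complex (\ref{CohomExactSequence}) and let the dimension equality in Theorem \ref{thm:AltMultThm} force the right-hand term to vanish. Because $\pi_n^!$ and $\psi_n^!$ are $S_n$-equivariant and projection onto an isotypic component is an exact functor on $\Q S_n$-modules (we are in characteristic zero, so no order-divisibility obstruction arises), one obtains an induced sequence
\[
0 \longrightarrow \Hkpf^{Alt} \overset{\pi_n^!}{\longrightarrow} \Hkpv^{Alt} \overset{\psi_n^!}{\longrightarrow} \Hkpb^{Alt} \longrightarrow 0
\]
which is still a complex (the composition remains zero), and which preserves the left- and right-exactness of the original sequence, even though the original is not exact in the middle.

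Next I would invoke Theorem \ref{thm:AltMultThm}, which asserts that $\Hkpf^{Alt}$ and $\Hkpv^{Alt}$ are isomorphic as $S_n$-modules, and in particular have the same finite $\Q$-dimension. Combined with the injectivity of the induced map $\pi_n^!:\Hkpf^{Alt} \hookrightarrow \Hkpv^{Alt}$ coming from left-exactness, a dimension count forces this map to be a linear isomorphism. The relation $\psi_n^! \circ \pi_n^! = 0$ then implies that $\psi_n^!$ vanishes on the entire image of $\pi_n^!$, which is now all of $\Hkpv^{Alt}$. Right-exactness gives $\psi_n^!:\Hkpv^{Alt} \twoheadrightarrow \Hkpb^{Alt}$, so the only possibility is $\Hkpb^{Alt}=0$.

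There is essentially no genuine obstacle once Theorem \ref{thm:AltMultThm} is available; the content of the theorem has been fully absorbed into the comparison between the multiplicities of $\mathrm{Alt}$ in $\Hkpv$ and $\Hkpf$, which in turn rests on the wreath-product decompositions of Theorems \ref{thm:pvqkDecompositionThm} and \ref{thm:pfqkDecompositionThm}. The only minor point of care in writing up the argument is to verify explicitly that isotypic projection preserves the (separate) left- and right-exactness of a complex of $\Q S_n$-modules, which is immediate since $(-)^{Alt}$ is a direct-summand functor.
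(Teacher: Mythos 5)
Your proposal is correct and is essentially the paper's own argument: the paper derives Theorem \ref{thm:PBAltMultThm} as an immediate corollary of the left- and right-exactness of the complex (\ref{CohomExactSequence}), the $S_n$-equivariance of its maps, and the equality of alternating multiplicities from Theorem \ref{thm:AltMultThm}, exactly as you spell out. The only difference is that you make explicit the routine verifications (exactness of isotypic projection in characteristic zero and the dimension count forcing $\pi_n^!$ to be an isomorphism on alternating components) that the paper leaves to the reader.
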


\subsection{Plethystic View of $\pmb{\pfq}$}

It is clear from Theorem \ref{thm:pfqkDecompositionThm} that understanding induction of representations from wreath products is key to understanding the decomposition of $\pfq$ as an $S_n$-module.  In this subsection we will use the well-known fact that induction from wreath products can be rephrased in terms of plethysm to give an alternate expression for (\ref{pfqkDecompositionEqn}).  We then use this new phrasing to identify a large collection of $S_n$ irreducibles which cannot appear in $\pfq$.  We assume familiarity with the Frobenius characteristic of an $S_n$ module, Schur functions and plethysm; and with the Littlewood-Richardson and Pieri rules for decomposing induced modules.  Two references are \cite{M} (section I.8) and \cite{F-H} (Chapter 1 and Appendix A).

\newpage

\begin{theorem}
\label{thm:Plethysm}
The characteristic $\mathrm{ch} \Hkpf$ of $\Hkpf$ is given by:
\begin{equation*}
\mathrm{ch} \Hkpf = \bigoplus_{\substack{\ua \vdash k \\ l(\ua) \leq n-k}} \prod_t v_{t,a_t} [e_{t+1}] \cdot h_{n-k-l(\ua)}
\end{equation*}
where the sum is over partitions $\ua: k=\sum_{t=1}^k ta_t,\ a_t \in \N$, of $k$, with no more than $n-k$ parts.  Also:

\begin{enumerate}
\item $e_p=s_{(1,\dots, 1)}$ is the Schur function for the partition of $p\in \N$ consisting of $p$ ones;
\item $h_p=s_{(p)}$ is the Schur function for the partition $p=p$ of $p\in \N$;
\item $v_{t,a_t}:=e_{a_t}$ if $t$ is odd, and $v_{t,a_t}:=h_{a_t}$ if $t$ is even;
\item $v_{t,a_t}[e_{t+1}]$ denotes plethysm of Schur functions.
\end{enumerate}
\end{theorem}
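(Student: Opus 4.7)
The plan is to derive the identity by applying the Frobenius characteristic to the $S_n$-module decomposition of Theorem \ref{thm:pfqkDecompositionThm} and then translating each induced wreath-product summand into a plethysm using the standard identities (\ref{plethysm}) and (\ref{plethysmB}). The theorem is essentially just a repackaging of Theorem \ref{thm:pfqkDecompositionThm} on the symmetric-function side, so the task is purely one of careful bookkeeping.

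I would begin by factoring the induction from $\prod_i S_i \wr S_{\alpha_i}$ up to $S_n$ through the intermediate product $\prod_i S_{i\alpha_i}$. Iterated application of (\ref{plethysmB}) to the outer tensor product then produces
\[
\ch \pfqk = \sum_{\substack{\alpha \vdash n \\ l(\alpha) = n-k}} \prod_i \ch\bigl( \ind_{S_i \wr S_{\alpha_i}}^{S_{i\alpha_i}} \mathrm{Alt}_i \wr (-1)^{i-1} \bigr).
\]
For each $i$, formula (\ref{plethysm}) identifies the $i$th factor with $\ch((-1)^{i-1}) [\ch \mathrm{Alt}_i] = \ch((-1)^{i-1})[e_i]$. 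The outer characteristic is then read off from the fact that the one-dimensional representation $(-1)^{i-1}$ of $S_{\alpha_i}$ is trivial when $i$ is odd (characteristic $h_{\alpha_i}$) and alternating when $i$ is even (characteristic $e_{\alpha_i}$).

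Finally, I would separate off the $i = 1$ factor, which reduces to the trivial $S_{\alpha_1}$-module with characteristic $h_{\alpha_1}$, and reindex the remaining factors by $t := i-1$ and $a_t := \alpha_{t+1}$. Under this reindexing the parity conventions match exactly the definition of $v_{t,a_t}$, so each $i \geq 2$ contributes $v_{t,a_t}[e_{t+1}]$. The relations $\sum_{i \geq 2}(i-1)\alpha_i = k$ (which follows from $\sum_i i \alpha_i = n$ and $\sum_i \alpha_i = n-k$) and $l(\ua) = (n-k) - \alpha_1$ give a bijection between partitions $\alpha \vdash n$ with $l(\alpha) = n-k$ and pairs consisting of a partition $\ua \vdash k$ with $l(\ua) \leq n-k$ together with the value $\alpha_1 = (n-k) - l(\ua)$; under this bijection $h_{\alpha_1}$ becomes $h_{n-k-l(\ua)}$, yielding the stated formula. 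The main thing to watch is the parity convention in the definition of $v_{t,a_t}$ and the reindexing step, but neither is a serious obstacle.
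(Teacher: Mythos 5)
Your proposal is correct and follows essentially the same route as the paper's proof: both apply the Frobenius characteristic to the decomposition of Theorem \ref{thm:pfqkDecompositionThm}, convert each wreath-product induction into a plethysm via (\ref{plethysm}) and the outer inductions into products via (\ref{plethysmB}), and then reindex $t=i-1$, $a_t=\alpha_{t+1}$ using the bijection between partitions $\alpha\vdash n$ with $n-k$ parts and partitions $\ua\vdash k$ with $l(\ua)\le n-k$. The only (immaterial) difference is organizational: the paper first isolates the singleton parts and bundles the $i>1$ factors into a module $\Ka$ over $S_{k+j}$ before applying plethysm, whereas you treat all factors, including $i=1$, uniformly through $\prod_i S_{i\alpha_i}$.
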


\begin{proof}

We refine the decomposition (\ref{pfqkDecompositionEqn}) by singling out, for each partition $\alpha$ of $n$ with $n-k$ parts, the number $j$ of parts greater (strictly) than 1:

\begin{align*}
\Hkpf &= \bigoplus_{\substack{\alpha \vdash n \\ l(\alpha)=n-k}} \ind_{\prod_i S_i\wr S_{\alpha_i}}^{S_n} \bigotimes_i U_i \\
&= \bigoplus_{1\leq j \leq n-k}  \bigoplus_{\substack{\alpha \vdash n \\ l(\alpha)=n-k \\ \alpha_1=n-k-j}} \ind_{(\prod_{i>1} S_i\wr S_{\alpha_i}) \times S_{n-k-j}}^{S_n} \lbrace \big( \bigotimes_{i>1} U_i \big) \otimes 1 \rbrace
\end{align*}
where, as usual, $S_i \wr S_{\alpha_i} \leq S_{i\ai}$ and $S_{i\ai}$ is viewed as the subgroup of $S_n$ which permutes the indices $\{1+\sum_{j=1}^{i-1} j \alpha_j, \cdots, i\ai +\sum_{j=1}^{i-1} j \alpha_j\}$; and $S_{n-k-j}$ permutes the set $\{1,\dots, \alpha_1\}$.

We now unbundle the induction as follows:

\begin{multline}
\label{UnbundleInduction}
\ind_{(\prod_{i>1} S_i\wr S_{\alpha_i}) \times S_{n-k-j}}^{S_n} \lbrace \big( \bigotimes_{i>1} U_i \big) \otimes 1 \rbrace \\
= \ind_{S_{k+j} \times S_{n-k-j}}^{S_n} \lbrace \big( \ind_{(\prod_{i>1} S_i\wr S_{\alpha_i})}^{S_{k+j}} \bigotimes_{i>1} U_i \big) \otimes 1 \rbrace
\end{multline}

The innermost induced module, $\Ka$:

\begin{equation}
\label{KaDef}
\Ka := \ind_{(\prod_{i>1} S_i\wr S_{\alpha_i})}^{S_{k+j}} \bigotimes_{i>1} U_i
\end{equation}
depends only on a partition $\ua$ of $k$ into $j$ parts, which can be obtained from $\alpha$ by subtracting $1$ from each part (so $a_t = \alpha_{t+1}$ for all $t=1,\dots, k$).

Let us consider, for each degree $k$, the modules $\Ka$ that can appear.  We first take the case where $\ua$ is a homogeneous partition of $k$ with more than one part, that is:  $k=ta_t$, for some $1\leq t<k$.

We get:

\begin{equation*}
\Ka = \ind_{S_{t+1}\wr S_{a_t}}^{S_{(t+1)\cdot a_t}}   U_{t+1}
\end{equation*}

If $t$ is odd, we have $U_{t+1} = \text{Alt}_{t+1} \wr \text{Alt}_{a_t}$.  Now if we write $e_p:=\ch \text{Alt}_p$, we obtain by (\ref{plethysm}) that:

\begin{equation}
\label{KaOdd}
\ch \Ka = e_{a_t}[e_{t+1}]
\end{equation}

On the other hand, if $t$ is even, we have $U_{t+1} = \text{Alt}_{t+1} \wr 1_{a_t}$, where for $p$ a positive integer, we denote by $1_p$ the trivial representation for $S_p$.  So if we write $h_p:=\ch 1_p$, we obtain by (\ref{plethysm}) that:

\begin{equation*}
\ch \Ka = h_{a_t}[e_{t+1}]
\end{equation*}

Finally, if we let $v_{t,a_t}:=e_{a_t}$ if $t$ is odd, and $v_{t,a_t}:=h_{a_t}$ if $t$ is even, then we get the following expression for $\ch \Ka$ when $\ua=\sum_t t a_t$ is a non-homogeneous partition of $k$:

\begin{equation}
\label{NonHomogKa}
\ch \Ka = \prod_t v_{t,a_t}[e_{t+1}]
\end{equation}
(this uses (\ref{plethysmB})). A final application of (\ref{plethysmB})) to incorporate the induction:

\begin{equation*}
\ind_{S_{k+j}\times S_{n-k-j}}^{S_n} \Ka \otimes 1
\end{equation*}
concludes the proof.
\end{proof}

We have the following theorem:

\begin{theorem}
\label{thm:LamExceedsK}
The irreducible $V(\lambda)$ occurs in $\pfqk$ only if $|\lambda|\geq k$.\footnote{The notation $V(\lambda)$ for an irreducible representation of $S_n$ was explained in subsection \ref{RepStabSubsection}.  We define $|\lambda|:= \sum_{i\geq 1} \lambda_i$.}
\end{theorem}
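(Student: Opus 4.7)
The plan is to show that every Schur function $s_\mu$ appearing in $\ch \pfqk$ has first part $\mu_1 \leq n - k$. Since $V(\lambda)$ corresponds to the partition $\mu = (n-|\lambda|, \lambda_1, \ldots, \lambda_r)$ whose largest part is $\mu_1 = n - |\lambda|$, this bound is equivalent to $|\lambda| \geq k$. By Theorem~\ref{thm:Plethysm},
\[
\ch \pfqk = \sum_{\substack{\ua \vdash k \\ l(\ua) \leq n-k}} F_{\ua} \cdot h_{n-k-l(\ua)}, \qquad F_{\ua} := \prod_t v_{t, a_t}[e_{t+1}],
\]
so it suffices to control the first part of every Schur term appearing in each $F_\ua \cdot h_{n-k-l(\ua)}$.

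The key lemma I would establish is: for any $S_a$-module $V$ with character $v = \ch V$ of degree $a$, the plethysm $v[e_{t+1}]$ is a nonnegative combination of $s_\nu$ with $\nu_1 \leq a$. The proof is representation-theoretic: formula~(\ref{plethysm}) identifies $v[e_{t+1}] = \ch \ind_{S_{t+1}\wr S_a}^{S_{a(t+1)}}(\mathrm{Alt}_{t+1} \wr V)$. If $V(\nu)$ is a summand of this induced module, then Frobenius reciprocity followed by further restriction to the base subgroup $S_{t+1}^a \leq S_{t+1}\wr S_a$ shows that $V(\nu)|_{S_{t+1}^a}$ contains $\mathrm{Alt}_{t+1}^{\otimes a} = V(1^{t+1})^{\otimes a}$ as a subrepresentation. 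By the iterated Littlewood--Richardson rule this forces the coefficient $c^{\nu}_{(1^{t+1}),\ldots,(1^{t+1})}$ to be nonzero, meaning $\nu$ can be built from $\emptyset$ by successively adjoining $a$ vertical strips of size $t+1$. Since each vertical strip contributes at most one box to row~$1$, we conclude $\nu_1 \leq a$.

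Applying this lemma with $a = a_t$ to each factor of $F_\ua = \prod_t v_{t, a_t}[e_{t+1}]$ shows the factor is supported on Schur functions with first part at most $a_t$. Using the standard Littlewood--Richardson bound $\rho_1 \leq \lambda_1 + \mu_1$ for $s_\lambda \cdot s_\mu$ (the top row of any LR filling of $\rho/\mu$ may contain only $1$'s, and at most $\lambda_1$ such entries are available), iterating over $t$ yields that $F_\ua$ is supported on Schur $s_\nu$ with $\nu_1 \leq \sum_t a_t = l(\ua)$. Finally, multiplication by $h_{n-k-l(\ua)}$ via the Pieri rule produces only $s_\mu$ with $\mu/\nu$ a horizontal strip of size $n-k-l(\ua)$, hence $\mu_1 \leq \nu_1 + (n-k-l(\ua)) \leq n-k$. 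Summing over $\ua$ gives the required bound and completes the proof. The main technical content lies in the lemma, where one must package $v[e_{t+1}]$ as a wreath-induced representation via~(\ref{plethysm}) and then read off the row bound from the vertical-strip structure produced by iterated Littlewood--Richardson; the remaining steps are routine bookkeeping with Pieri's rule and the product bound on first parts.
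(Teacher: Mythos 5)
Your proof is correct, and its skeleton coincides with the paper's: both start from the plethystic formula of Theorem \ref{thm:Plethysm}, reduce the claim to the bound ``first row length at most $n-k$'' for every Schur constituent, and assemble the pieces with the Littlewood--Richardson bound on first parts for products and the Pieri rule for the final factor $h_{n-k-l(\ua)}$. The one genuine point of divergence is how the central bound --- that every $s_\nu$ appearing in $v[e_{t+1}]$ (with $v$ of degree $a$) has $\nu_1\leq a$ --- gets proved. The paper (Lemma \ref{BallastLemma}, part 1) argues combinatorially on monomials: expanding $s_\beta[e_q]$ in the variables $x_i$, each of the $p$ boxes of a tableau of shape $\beta$ is coloured by a monomial of $e_q$ carrying at most one factor of $x_1$, so $x_1$ appears to power at most $p$ and hence $\nu_1\leq p$. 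You instead realize $v[e_{t+1}]$ as $\ind_{S_{t+1}\wr S_a}^{S_{a(t+1)}}(\mathrm{Alt}_{t+1}\wr V)$ via (\ref{plethysm}), restrict to the base subgroup $S_{t+1}^{\times a}$ to force $c^{\nu}_{(1^{t+1}),\dots,(1^{t+1})}\neq 0$, and read off $\nu_1\leq a$ from the fact that $\nu$ is a union of $a$ vertical strips. Both arguments are sound; yours is more representation-theoretic and arguably explains \emph{why} the bound holds (it is exactly the dual Pieri constraint coming from the alternating representation on the base group), while the paper's is more elementary and applies verbatim to $s_\beta[e_q]$ for arbitrary $\beta$ without invoking wreath-product module structure. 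Your closing bookkeeping ($\nu_1\leq\sum_t a_t=l(\ua)$, then $\mu_1\leq n-k$, equivalently $|\lambda|\geq k$) matches the paper's treatment of the case $n>k+j$ via Pieri.
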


\begin{proof}  The theorem will be an immediate consequence of the following lemma:

\begin{lemma}
\label{BallastLemma}
\begin{enumerate}
\item Let $\beta$ be a partition of $p$, and $s_{\beta}$ the corresponding Schur polynomial.  The Schur polynomial corresponding to the irreducible $V(\lambda)$ appears in $s_{\beta}[e_q]$ with positive multiplicity only if $|\lambda | \geq p(q-1)$.
\item For irreducibles $V(\mu)_p$ and $V(\nu)_q$ over $S_p$ and $S_q$ respectively, the irreducible $V(\lambda)$ appears in $\ind_{S_p\times S_q}^{S_{p+q}} V(\mu)_p \otimes V(\nu)_q$ with positive multiplicity only if $|\lambda | \geq |\mu| + |\nu|$.
\end{enumerate}
\end{lemma}

Note that in part 1) we only need the case where $s_{\beta} = h_p$ or $s_{\beta} = e_p$ (in other words, $\beta = (p)$ or $\beta = (1, \dots, 1)$, with $p$ copies of $1$), but nothing turns on this distinction in the proof, so we give the general case.

\begin{proof}[Proof of Lemma]
For part 1), we recall that $s_{\beta}[e_q]$ is the characteristic of a module over $S_{pq}$.  Also, if $\lambda$ is the partition $\lambda: (\lambda_1 \geq \dots \geq \lambda_l>0)$,  then $V(\lambda)=V(\lambda)_{pq}$ is by definition the irreducible $S_{pq}$-representation corresponding to the partition of $pq$ given by $\big((\lambda_0:=pq-\sum_{i=1, \dots, l} \lambda_i) \geq \lambda_1\geq \dots \geq \lambda_l \big)$.  So the claim is equivalent to the claim that $\lambda_0\leq p$.

We view $e_q$ as a polynomial in the variables $\{x_i: i=1,\dots, s\geq 1, \ s\in\N\}$; and suppose we have expanded $s_{\beta}[e_q]$ in terms of the $x_i$.  We claim that, in each monomial of this expansion, $x_1$ never appears with a power greater than $p$.  Otherwise, suppose $m$ is a monomial in the expansion in which $x_1$ is raised to a power greater than $p$.  By the definition of Schur polynomials, $m$ arises from a semi-standard tableau for $\beta$, with the $p$ boxes coloured by the monomials from $e_q$.  It follows that at least one of these boxes must be coloured by a monomial from $e_q$ in which $x_1$ is raised to a power of at least $2$.  But the  monomials in $e_q$, in turn, arise from semi-standard colourings of the Young diagram for the partition $(q)$ of $q$, and hence have at most one power of $x_1$, a contradiction.

Our claim that $\lambda_0\leq p$ is now immediate, since otherwise there would be a semi-standard colouring of the Young diagram for $V(\lambda)$ in which all boxes in the top row are coloured with $x_1$, and the corresponding monomial would have $x_1$ raised to a power greater than $p$.

For part 2), let $\bar{\mu}$, $\bar{\nu}$ and $\bar{\lambda}$ be the partitions of $p$, $q$ and $p+q$ corresponding to $V(\mu)_p$, $V(\nu)_q$ and $V(\lambda)_{p+q}$, respectively; and let $\mu_0$, $\nu_0$ and $\lambda_0$, respectively, be the lengths of the top rows of the corresponding Young diagrams.  Recall that the Littlewood-Richardson rule says that $V(\lambda)$ may appear in $\ind_{S_p\times S_q}^{S_{p+q}} V(\mu)_p \otimes V(\nu)_q$ only if the Young diagram for $\bar{\mu}$ can be expanded to the Young diagram for $\bar{\lambda}$ by a strict $\bar{\nu}$-expansion (see, for instance, \cite{F-H}, Appendix A, for a discussion of the Littlewood-Richarson rule).  It is an immediate consequence of the strictness requirement that $\lambda_0 \leq \mu_0 + \nu_0$, which is obviously equivalent to $|\lambda | \geq |\mu| + |\nu|$.

\end{proof}

If $n=k+j$, it now follows from (\ref{UnbundleInduction}), (\ref{NonHomogKa}) and Lemma \ref{BallastLemma} that $V(\lambda)$ only appears in $\pfqk$ if $|\lambda| \geq \sum_t ta_t = k$, as required.  If $n>k+j$, we still need to determine which $V(\lambda)$ may appear in the induced modules:

\begin{equation}
\label{KaInduced}
\ind_{S_{k+j} \times S_{n-k-j}}^{S_n} \Ka \otimes 1
\end{equation}

By the Pieri rule, $V(\lambda)$ can only appear in this induced module if the Young diagram for the irreducible $V(\lambda)$ can be obtained from the Young diagram for some $V(\mu)$ which appears in $\Ka$ by adding a total of $n-k-j$ boxes to the rows, with no two boxes in the same column (see \cite{F-H}, Appendix A).  But then it is clear that, since $|\mu|\geq k$, we must also have $|\lambda|\geq k$.  This concludes the proof of the theorem.
\end{proof}

\begin{example} Decomposition of $\mathfrak{pfb}_n^{!1}$
\end{example}

The degree $1$ component $\mathfrak{pfb}_n^{!1}$ corresponds to partitions of $n$ given by $(2\geq 1\geq \dots \geq1)$ (with $(n-2)$ copies of $1$).  So the only module $\Ka$ that appears (in the notation of (\ref{KaDef})) is $\Ka=U_2=\text{Alt}_2$.  Then, as per (\ref{KaInduced}), we need to determine the decomposition of:

\begin{equation*}
\text{Alt}_2 \circ 1_{n-2} := \ind_{S_{2} \times S_{n-2}}^{S_n} \text{Alt}_2 \otimes 1
\end{equation*}

We do this by the Pieri rule, which we recalled at the end of the previous theorem.  There are only two ways to extend the diagram for $\text{Alt}_2$ (which we illustrate for $n=6$):

\[
\xy
(10,0)*{\begin{Young}
 \cr
 \cr
\end{Young}} = "1";
(16,0)*{\circ};
(30,0)*{\begin{Young}
 & & &\cr
\end{Young}
} = "2";
(44,0)*{=};
(60,0)*{\begin{Young}
 & & & &\cr
 \cr
\end{Young}} = "3";
(76,0)*{\oplus};
(90,0)*{\begin{Young}
 & & & \cr
 \cr
 \cr
\end{Young}} = "4";
(120,0)*{} = "5";
\endxy
\]

So we conclude that $\mathfrak{pfb}_n^{!1} = V(1) \oplus V(1,1)$ (the second term appearing only for $n\geq 3$), as claimed in (\ref{pfqiDecompA}) and (\ref{pfqiDecompB}).

\begin{theorem}
\label{thm:NoVp}
As a representation of $S_n$, $\pfqk$ does not include the irreducible $V(p)$, for any $p\geq 1$, except for $1$ copy of $V(1)$ in degree $1$.
\end{theorem}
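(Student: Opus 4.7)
The plan is to combine Theorem \ref{thm:LamExceedsK} with the plethystic decomposition of Theorem \ref{thm:Plethysm} and the Pieri rule, narrowing down the candidate summands of $\pfqk$ in which $V(p)$ could live until only one remains, and then ruling it out by a Schur-functor computation. A first application of Theorem \ref{thm:LamExceedsK} to the partition $\lambda = (n-p, p)$ underlying $V(p)$ immediately forces $p = |\lambda| \geq k$, which I assume throughout.

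Next I examine each summand $\ind_{S_{k+j} \times S_{n-k-j}}^{S_n} \Ka \otimes 1$ in Theorem \ref{thm:Plethysm}, where $j = l(\ua)$ and $\ch \Ka = \prod_t v_{t,a_t}[e_{t+1}]$. The essential observation is a first-row bound: every monomial of $e_{t+1}$ is squarefree in $x_1$, so $v_{t,a_t}[e_{t+1}]$ has $x_1$-degree at most $a_t$. Translating to Schur content, any $V(\mu_t)$ occurring in $v_{t,a_t}[e_{t+1}]$ satisfies $(\mu_t)_1 \leq a_t$; Littlewood--Richardson then gives $\mu_1 \leq \sum_t a_t = j$ for every $V(\mu)$ appearing in $\Ka$. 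The Pieri rule applied to the outer induction requires, in order for $V(n-p, p)$ to appear from some $V(\mu)$ in $\Ka$, that $\mu$ have at most two rows and satisfy $\mu_1 \geq p$. Stringing the inequalities together yields $p \leq \mu_1 \leq j \leq k \leq p$, forcing $p = j = k$ and $\mu = (k, k)$. The collapse $j = k$ further pins down $\ua = (1^k)$, so the only surviving candidate is $\Ka = e_k[e_2]$, and the question reduces to whether $s_{(k,k)}$ occurs in $e_k[e_2]$.

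The main obstacle---which is easily dispatched---is showing that $s_{(k,k)}$ does not occur in $e_k[e_2]$ for any $k \geq 2$. I would argue this by interpreting $e_k[e_2]$ as the Schur functor $\Lambda^k(\Lambda^2 V)$: for $\dim V = 2$, $\Lambda^2 V$ is one-dimensional, so $\Lambda^k(\Lambda^2 V) = 0$ whenever $k \geq 2$. Writing $e_k[e_2] = \sum_\mu c_\mu s_\mu$ and evaluating at $\dim V = 2$, every $s_\mu(V)$ with $l(\mu) \leq 2$ must then have $c_\mu = 0$; in particular $c_{(k,k)} = 0$, completing the argument for $k \geq 2$. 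The $k = 1$ edge case is handled directly: $\ua = (1)$ gives $\Ka = e_1[e_2] = s_{(1,1)}$, and Pieri shows that adding a horizontal strip of $n-2$ boxes to $(1,1)$ produces only the shape $(n-1, 1) = V(1)$, accounting for exactly the single copy named in the exception.
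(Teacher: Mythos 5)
Your proposal is correct, and while it shares the paper's skeleton (the plethystic decomposition of Theorem \ref{thm:Plethysm}, the first-row/column bound of Lemma \ref{BallastLemma}, Littlewood--Richardson, and the Pieri rule for the outer induction by $h_{n-k-j}$), the decisive steps are genuinely different. The paper never invokes Theorem \ref{thm:LamExceedsK}; instead it proves the stronger statement that for $k\geq 2$ \emph{every} constituent of every $\Ka$ has a Young diagram with at least three rows --- for $\ua=(1^k)$ by quoting the explicit Frobenius-notation expansion $e_k[e_2]=\sum_\pi s_\pi$ over partitions $(\gamma_1-1,\dots,\gamma_r-1\mid \gamma_1,\dots,\gamma_r)$, and for $\ua$ with some $a_t\geq 1$, $t\geq 2$, by the ``at most $a_t$ columns but $a_t(t+1)$ boxes'' count --- and then observes that Pieri cannot decrease the number of rows. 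You instead run the inequality chain $p\leq \mu_1\leq j\leq k\leq p$ (closing it with Theorem \ref{thm:LamExceedsK} and the Pieri interlacing condition $n-p\geq\mu_1\geq p\geq\mu_2$) to isolate the single candidate $s_{(k,k)}$ inside $e_k[e_2]$, and then kill it by specializing to two variables, i.e.\ $\Lambda^k(\Lambda^2\Q^2)=0$ for $k\geq 2$ together with the linear independence of the surviving $s_\mu(x_1,x_2)$. What your route buys is independence from the explicit plethysm formula for $e_k[e_2]$: the two-variable evaluation is elementary and in fact recovers, for that particular plethysm, exactly the ``no constituent with at most two rows'' conclusion the paper extracts from the formula. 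What it costs is reliance on Theorem \ref{thm:LamExceedsK} (harmless, as that theorem is proved independently) and a slightly less informative picture of the non-homogeneous $\Ka$'s, which your argument excludes only as sources of $V(p)$ rather than of all few-row constituents. One cosmetic point: in the $k=1$ case, $e_1[e_2]\circ h_{n-2}$ also produces $V(1,1)=(n-2,1,1)$, not only $(n-1,1)$; this does not affect your conclusion since only the two-row shapes are at issue, but the phrasing ``produces only the shape $(n-1,1)$'' should be qualified accordingly.
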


\begin{proof}
We have already covered the degree $1$ case in the previous example, so we may assume $k\geq 2$.

Going back to (\ref{UnbundleInduction}) and (\ref{KaDef}), we again consider the modules $\Ka$ which may appear.  We first assume that $\ua$ is the homogeneous partition of $k$ given by:  $k=1\cdot a_1$ (so $a_1=k$).  By (\ref{KaOdd}),

\begin{equation*}
\ch \Ka = e_{k}[e_{2}]
\end{equation*}

The following result is well known (see \cite{M}, section I.8):

\begin{proposition}
\begin{equation*}
e_k[e_2] = \sum_{\pi} s_{\pi}
\end{equation*}
where the sum is over partitions $\pi$ of $k$ of the form $(\gamma_1-1,\dots,\gamma_r-1\ |\ \gamma_1,\dots, \gamma_r)$, where

\begin{align*}
& \gamma_1 >\dots > \gamma_r >0 \\
\mathrm{and     }\quad  & \sum_i \gamma_i = k
\end{align*}

\end{proposition}

We recall that the notation $(\gamma_1-1,\dots,\gamma_r-1\ |\ \gamma_1,\dots, \gamma_r)$ for the partition $\pi$ of $k$ means that the Young diagram for $\pi$ has $r$ boxes along the main diagonal (with coordinates $(i,i)$, for $1\leq i \leq r$), as well as $\gamma_i -1$ boxes to the right of $(i,i)$, and $\gamma_i$ boxes below $(i,i)$.

Since $k\geq 2$, we have $\gamma_1\geq 2$ also, and so the Young diagram for $\pi$ has at least $3$ rows.  Therefore $V(p)$ (which only has $2$ rows) cannot appear.

We now consider the case where the partition $\ua$ of $k$ has $a_t\geq 1$ for some $t\geq 2$.  Then component $t$ of the product (\ref{NonHomogKa}) is given by $v_{t,a_t}[e_{t+1}]$.  By (the proof of) Lemma \ref{BallastLemma}.1, the Young diagram for each term of the expansion of this plethysm into Schur polynomials has at most $a_t$ columns.  But each such term must correspond to a Young diagram with $a_t(t+1)$ boxes, so the diagram must have at least $t+1\geq 3$ rows.  Finally, it follows easily from the Littlewood-Richardson rule that the Young diagram for each term of the product (\ref{NonHomogKa}) must also have at least $t+1\geq 3$ rows.

This deals with the case where the relevant partition of $n$ in (\ref{UnbundleInduction}) has no parts equal to $1$ (that is, $n=k+j$).  If however $n>k+j$, then as explained at the end of the proof of Theorem \ref{thm:LamExceedsK}, we must add $n-k-j$ boxes to the Young diagrams of irreducibles appearing in the $\Ka$ in accordance with the Pieri rule, in order to determine which irreducibles may appear in $\pfqk$.  But it is clear that adding boxes to a diagram in accordance with the Pieri rule cannot produce a diagram with fewer rows than the original, so we are done.

\end{proof}

\subsection{The Cohomology of $\pmb{\fB}$}
\label{fBCohomSubsection}

The transfer argument described in Subsection \ref{CohomvBSubsec} (and due to \cite{C-F}) carries over to the exact sequence

\begin{equation*}
1 \ra \PfB \ra \fB \ra S_n \ra 1
\end{equation*}

As a result, we know that the rank of $H^i(\fB,\Q)$ coincides with the multiplicity of the trivial representation of $S_n$ in $\pfq$.  From this we will derive the following theorem:

\begin{theorem}
\label{thm:fBCohomThm}
For $i,n \in \N$ and $i\geq 1$,
\begin{equation*}
H^i(\fB,\Q) \cong 0
\end{equation*}
\end{theorem}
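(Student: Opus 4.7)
The plan is to use the transfer identification $H^i(\fB,\Q)\cong(\pfqi)^{S_n}$ already set up at the start of this subsection, which reduces the theorem to showing that the trivial $S_n$-representation has multiplicity zero in $\pfqi$ for every $i\geq 1$. The strategy is then to feed the decomposition of Theorem \ref{thm:pfqkDecompositionThm} through Frobenius reciprocity and analyze each summand.

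First, by Frobenius reciprocity, the multiplicity of the trivial $S_n$-representation inside $\ind_{\prod_j S_j\wr S_{\alpha_j}}^{S_n}\bigotimes_j U_j$ equals the multiplicity of the trivial representation of the wreath product $\prod_j S_j\wr S_{\alpha_j}$ inside $\bigotimes_j U_j$. Since each $U_j$ is one-dimensional, the outer tensor product $\bigotimes_j U_j$ is itself one-dimensional, and it is the trivial representation if and only if every factor $U_j$ with $\alpha_j\geq 1$ is trivial.

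The next step is to read off when $U_j$ is trivial directly from the explicit formula
\[(\sigma_1,\dots,\sigma_{\alpha_j},\tau)\cdot u_j=(-1)^{\sum_l|\sigma_l|+(j-1)|\tau|}u_j.\]
For $j\geq 2$, $S_j$ contains a transposition, so setting $\sigma_1=(12)$ with all other coordinates trivial already produces the scalar $-1$; hence $U_j$ is non-trivial whenever $\alpha_j\geq 1$ and $j\geq 2$. For $j=1$, both $S_1$ and the exponent $j-1$ vanish, so $U_1$ is automatically trivial for any $\alpha_1$.

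Combining these, the only partition $\alpha\vdash n$ for which $\bigotimes_j U_j$ can be trivial is $\alpha=(1^n)$, which has length $n$ and so forces $i=0$. For $i\geq 1$ no summand contributes, $(\pfqi)^{S_n}=0$, and the theorem follows. There is no substantive obstacle once Theorem \ref{thm:pfqkDecompositionThm} is in hand; the only point worth checking is the edge case $j=1$, but it cannot salvage any non-trivial $\alpha$, because the constraint $l(\alpha)=n-i<n$ forces some part $\alpha_j\geq 1$ with $j\geq 2$.
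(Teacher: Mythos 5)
Your proof is correct, but it is not the route the paper takes for this particular theorem. You run the transfer identification through the decomposition of Theorem \ref{thm:pfqkDecompositionThm} and Frobenius reciprocity: since each $\bigotimes_j U_j$ is one-dimensional, it is trivial iff every factor with $\alpha_j\geq 1$ is trivial, and any $U_j$ with $j\geq 2$ is killed by a single transposition in an $S_j$-coordinate; the constraint $l(\alpha)=n-i<n$ then forces such a part to exist. This is exactly the strategy the paper uses for the analogous computation of $H^k(\vB,\Q)$ in Theorem \ref{thm:vBCohomolThm}, so your argument has the virtue of treating the two cases uniformly, at the cost of depending on the full decomposition theorem. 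The paper instead gives a self-contained, more elementary argument (adapted from \cite{W}): for each basis element $v$ of degree at least $1$ it exhibits an involution $\tau=(i_1 i_2)$ (a transposition of the two smallest vertices of a component with at least two vertices) satisfying $\tau(v)=-v$, and then groups the averaging sum $\Pi(v)=\frac{1}{n!}\sum_{\sigma}\sigma(v)$ over left cosets of $\langle\tau\rangle$ to conclude $\Pi(v)=0$. The two arguments are really the same sign computation in different clothing --- your transposition in the $S_j$-coordinate of the wreath product is the paper's transposition $(i_1 i_2)$ acting on an admissible chain --- but the paper's version needs only the basis of Proposition \ref{BasisProp} and the sign rule of Corollary \ref{pfqSignsCor}, not the representation-theoretic machinery.
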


\begin{proof}

We adapt a proof given in \cite{W} for the case of the string motion group, and show that all basis elements in $\pfq$ project to zero under the projection $\Pi$ which sends $\pfq$ to its invariant subspace.

For $v \in \pfq$, $\Pi(v)$ is given by the formula:

\begin{equation*}
\Pi(v)= \frac{1}{n!} \sum_{\sigma\in S_n} \sigma(v)
\end{equation*}

If, for each such $v$, there is an involution $\tau\in S_n$ such that $\tau(v)=-v$, then we group the sum according to left cosets of $\langle\tau\rangle$ to conclude that $\Pi(v)=0$:

\begin{equation*}
\Pi(v)= \frac{1}{n!} \sum_{\sigma<\tau>\subseteq S_n} (\sigma(v) + \sigma\tau(v)) = \frac{1}{n!} \sum_{\sigma<\tau>\subseteq S_n} \sigma(v + \tau(v)) =0
\end{equation*}

So suppose $v \in \pfq$ is an element of the basis $\mB$ of $\pfq$ of degree at least $1$.  Then the associated graph has at least one component with at least two vertices.  Suppose the vertices in one such component are $\{i_1 < \dots < i_l\}$ with $l\geq 2$.  Then applying the transposition $(i_1i_2)$:

\begin{align*}
(i_1i_2) \cdot r_{i_1 i_2} \w \dots \w r_{i_{l-1}i_l} &=  r_{i_2 i_1} \w  r_{i_1 i_3} \w \dots \w r_{i_{l-1}i_l} \\
&= -r_{i_1 i_2} \w r_{i_1 i_3} \w \dots \w r_{i_{l-1}i_l} \\
&= -r_{i_1 i_2} \w r_{i_2 i_3} \w \dots \w r_{i_{l-1}i_l}
\end{align*}
as required.

\end{proof}

\subsection{Graded Characters of $\pfq$ and $\pf$}

In \cite{BEER} it was shown that the Hilbert Series for $\pfq$ is

\begin{equation}
\label{pfqHilbertSeriesEqn}
\pfq(z)=\sum_{0\leq k\leq n} S(n,n-k) z^k
\end{equation}
where the $S(n,k)$ are the Stirling numbers of the second kind, which give the number of (unordered) partitions of $[n]$ into $k$ (unordered) subsets.  The character formula that follows generalizes that result.  We let $V(\Gamma)$ denote the set of vertices of a graph $\Gamma$, and let $V(\tau)$ denote the set of indices of any cycle $\tau$ in $S_n$.

We take $\sigma\in S_n$, and as with the case of $\pvq$ we assume that $\sigma$ may be presented as a product of disjoint cycles $\sigma_1 \sigma_2 \dots \sigma_i \dots$ such that each $\sigma_i$ has length $j_i$ where $j_1\leq j_2\leq \dots$, and may be written $(a_i +1, a_i +2, \dots ,a_i +j_i)$, for some integers $a_l$ such that $a_1<a_2<\dots$.

\begin{theorem}
\label{PfBTheorem}
The character $\pfqsz$ is given by:

\begin{equation}
\label{PfBCharEqn}
\pfqsz = \sum_S \prod_{i=1}^r \big[ \sum_{k_i} \epsilon_{k_i} k_i^{(|S_i|-1)} z^{k_i(\sum_{\tau \in S_i} \dt-1)}   \big]
\end{equation}
where
\begin{itemize}

\item the first sum is over the unordered partitions $S=S_1 \sqcup \dots \sqcup S_r$ of the set of cycles (in the cycle decomposition of $\sigma$) into unordered subsets;

\item the second sum is over integers $k_i \geq 1$ which divide the orders of all cycles which belong to $S_i$; that is, such that there exist integers $\dt\geq 1$ satisfying $k_i \dt = |V(\tau)|$ for all cycles $\tau$ in $S_i$; and

\item $\epsilon_{k_i} = (-1)^{(k_i-1)(\sum_{\tau \in S_i} \dt-1) + \sum_{\tau \in S_i} (\dt-1)}$.

\end{itemize}
\end{theorem}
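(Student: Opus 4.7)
My plan is to adapt the counting-and-signs strategy used for Theorem \ref{thm:PvBTheorem} to the $\pfq$ setting. I would work with the basis $\mB$ of Proposition \ref{BasisProp} and observe that a basis element $\Gamma$ is characteristic for $\sigma$ precisely when $\sigma$ permutes its admissible chains; a direct analogue of Proposition \ref{GammaDecomposition} then partitions the components of $\Gamma$ into $\sigma$-orbits. Writing $\Gamma = \Gamma_1 \wedge \dots \wedge \Gamma_r$ for this decomposition, $\chi_\sigma(\Gamma) = \prod_i \chi_\sigma(\Gamma_i)$, and each orbit $\Gamma_i$ determines (and is determined by) a subset $S_i$ of cycles of $\sigma$, namely those $\tau$ with $V(\tau) \subseteq V(\Gamma_i)$. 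Since each such $\tau$ is forced to distribute its indices equally among the $k_i$ chain supports of $\Gamma_i$, necessarily $k_i \mid |V(\tau)|$, so $|V(\tau)| = k_i d_\tau$; the pair $(S, \{k_i\})$ is exactly the data summed over in \eqref{PfBCharEqn}.

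For each admissible configuration $(S_i, k_i)$, I would count orbits exactly as in Proposition \ref{NuFromMu}: each $\tau \in S_i$ is placed by choosing which of the $k_i$ orbit positions receives a designated initial element, for $k_i^{|S_i|}$ raw configurations; dividing by the $k_i$-fold freedom to cyclically relabel the orbit yields $k_i^{|S_i|-1}$ distinct orbits. Each of the $k_i$ chains in $\Gamma_i$ has $\sum_{\tau\in S_i} d_\tau$ vertices, so $\deg \Gamma_i = k_i\bigl(\sum_{\tau\in S_i} d_\tau - 1\bigr)$, matching the power of $z$ in \eqref{PfBCharEqn}.

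The main obstacle is computing $\chi_\sigma(\Gamma_i) = \epsilon_{k_i}$, which is more delicate than in the $\pvq$ case because $\sigma$ applied to an admissible chain of $\pfq$ need not itself be admissible. I would split the sign into two pieces. First, if $\gamma_{[1]}, \dots, \gamma_{[k_i]}$ are the chains of $\Gamma_i$ in root order, $\sigma$ induces a $k_i$-cycle $\rho$ on their indices, and putting $\gamma_{[\rho(1)]} \wedge \dots \wedge \gamma_{[\rho(k_i)]}$ back into root order in the exterior algebra contributes $(-1)^{(k_i-1)(\sum_{\tau \in S_i} d_\tau - 1)}$, since each chain has degree $\sum_{\tau \in S_i} d_\tau - 1$ and $\mathrm{sgn}(\rho) = (-1)^{k_i - 1}$. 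Second, each step $\sigma \gamma_{[j]} = \mathrm{sgn}(\pi_j)\gamma_{[\rho(j)]}$ introduces a reordering sign $\mathrm{sgn}(\pi_j)$ from Corollary \ref{pfqSignsCor}; rather than compute each $\pi_j$ individually, I would use the telescoping identity $\sigma^{k_i}\gamma_{[1]} = \bigl(\prod_j \mathrm{sgn}(\pi_j)\bigr)\gamma_{[1]}$ and evaluate the left side directly, since $\sigma^{k_i}$ restricts on $V(\gamma_{[1]})$ to a product over $\tau \in S_i$ of the single $d_\tau$-cycle $\tau^{k_i}|_{V(\gamma_{[1]}) \cap V(\tau)}$, so by Corollary \ref{pfqSignsCor} it acts on $\gamma_{[1]}$ by the sign $(-1)^{\sum_{\tau \in S_i} (d_\tau - 1)}$. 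Combining these two pieces would give the stated $\epsilon_{k_i}$, and summing $\prod_i \chi_\sigma(\Gamma_i)\,z^{\deg \Gamma_i}$ over all valid data $(S, \{k_i\})$ would then produce formula \eqref{PfBCharEqn}.
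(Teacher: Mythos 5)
Your proposal is correct and follows essentially the same route as the paper: decompose a characteristic basis element into $\sigma$-orbits of admissible chains, identify the data $(S,\{k_i\},\{\ttau\})$ up to cyclic relabeling of the chains in each orbit, and collect the degree $k_i(\sum_{\tau\in S_i}\dt-1)$, the count $k_i^{|S_i|-1}$, and the sign $\epsilon_{k_i}$. The one point where you diverge is the internal reordering sign $(-1)^{\sum_{\tau\in S_i}(\dt-1)}$, which you obtain by telescoping $\sigma^{k_i}$ on a single chain and applying Corollary \ref{pfqSignsCor}, whereas the paper uses the consecutive-integer normalization of the cycles to see that only the chain containing the largest element of each $\tau$ absorbs a nontrivial reordering; both arguments are valid and give the same answer.
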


Note that the formula (\ref{PfBCharEqn}) is the case $\sigma=1\in S_n$; indeed, all cycles in $\sigma=1$ have length $1$, so the $k_i$ and $\dt$ are all $1$, and then $\epsilon_{k_i} =+1$.

\begin{corollary}
\label{pfTheorem}
The characters $\pfsz$ are given, in terms of the $\pfqsz$, by the Koszul formulas:

\begin{equation*}
\pfsz = \frac{1}{\mathfrak{pfb}_{n,\sigma}^{!}(-z)}
\end{equation*}
\end{corollary}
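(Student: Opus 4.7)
}

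The plan is to deduce this corollary as a direct application of the extended Koszul formula of Theorem \ref{thm:ExtendedKoszulFormulaThm} to the pair $(\pf,\pfq)$. First I would verify that the hypotheses of that theorem are satisfied with $G = S_n$ and $A = \pf$: namely, that $\pf$ is a quadratic algebra of the form $TV/\langle R\rangle$ where $V$ is a finite-dimensional $S_n$-representation and $R \subseteq V\otimes V$ is an $S_n$-submodule, and that $A$ is Koszul. The first point follows from the presentation of $\pf$ recalled in the background: $V$ is the span of the generators $\{r_{ij}\}$ (which is a permutation representation of $S_n$ under $r_{ij}\mapsto r_{\sigma i,\sigma j}$, modulo the identification $r_{ji}=-r_{ij}$), and the defining relations are manifestly preserved by $S_n$, so $R$ is an $S_n$-submodule. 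The Koszulity of $\pf$ was established in \cite{BEER} and is recalled in the preceding subsection.

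Second, I would identify the quadratic dual. The identification $A^! = \pfq$ follows from the presentation given in equations~(\ref{pfbRelations}), together with the general fact that the cohomology algebras $H^*(\PfB,\Q)$ correspond to the quadratic duals of the associated graded algebras $\pf$, which was noted in Subsection \ref{CohomAlgSubsection}. Once these identifications are made, the fact that the $S_n$-action on $\pfq$ is the dual action induced from the one on $\pf$ (i.e., both come from the same $S_n$-action on the generators) means that the characters $\pfs(z)$ and $\pfqs(z)$ appearing in Theorem \ref{thm:ExtendedKoszulFormulaThm} coincide with the ones in the statement of the corollary.

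Third, I would invoke the theorem to conclude that
\begin{equation*}
\pfs(z)\,\pfqs(-z) = 1,
\end{equation*}
and divide by $\pfqs(-z)$ to obtain the stated formula. The division is legitimate because $\pfqs(-z)$ is a polynomial with constant term equal to $\chi_\sigma(1) = 1$ (since the degree-$0$ component of $\pfq$ is the trivial one-dimensional $S_n$-representation $\Q$), so in particular $\pfqs(-z)$ is invertible as a formal power series.

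There is no real obstacle here, since all the hard work has already been done in Theorem \ref{thm:ExtendedKoszulFormulaThm} and in the Koszulity result of \cite{BEER}; the only ``content'' of the corollary is the verification that the setup of Theorem \ref{thm:ExtendedKoszulFormulaThm} applies to $\pf$, which is essentially bookkeeping. Note that the analogous Corollary \ref{pvTheorem} for $\pvq$ was stated ``as an immediate consequence'' of the corresponding theorem in exactly the same way, and the proof here is parallel.
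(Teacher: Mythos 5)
Your proposal is correct and matches the paper's intent exactly: the corollary is stated as an immediate consequence of the extended Koszul formula (Theorem \ref{thm:ExtendedKoszulFormulaThm}), applied to the quadratic pair $\pf$, $\pfq$, whose Koszulity is supplied by \cite{BEER} (the paper records Koszulity of $\pfq$, which is equivalent to that of $\pf$). Your additional remarks on the $S_n$-equivariance of the relations and the invertibility of $\pfqs(-z)$ as a power series are exactly the routine verifications the paper leaves implicit.
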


As with $\pvq$, for any element $\sigma \in S_n$ and any monomial in the basis $\mB$ for $\pfq$ set out in Proposition \ref{BasisProp}, we let $\chism$ be the coefficient of $m$ itself in the expansion of $\sigma(m)$ in terms of the basis $\mB$; and we say that $m$ is a characteristic monomial for $\sigma$ if the coefficient $\chism$ is non-zero (see Definition \ref{CharMonomDef}). The proof of the theorem is again a matter of determining what are the characteristic monomials and counting their numbers and signs.

\begin{proof}[Proof of Theorem \ref{PfBTheorem}]

Since the defining relations in $\pfq$ are binomial expressions with coefficients $\pm 1$, the expansion of $\sigma(m)$ in terms of $\mB$ will have a single non-zero term and the only questions are whether $\chism\ne 0$ and if so what is the sign.  The following is the key proposition in that regard.

\begin{proposition}
\label{pfbCountingProp}
Let $\sigma\in S_n$ and suppose that every cycle $\tau$ in the cycle decomposition of $\sigma$ consists of consecutive integers, that is $\tau=(\alt + 1, \alt +2, \dots, \alt + |V(\tau)|)$, for some natural number $\alt$.  Suppose $m\in \mB$ satisfies $\chism\ne 0$, and let $\Gamma$ be the corresponding graph.
Then there exist:
\begin{enumerate}
\item a unique unordered partition $S=S_1 \sqcup \dots \sqcup S_r$ of the set of cycles in the cycle decomposition of $\sigma$ (Note: the cycles within any particular $S_i$ need not have the same length); and
\item unique integers $k_i,\ i=1,\dots ,r$, such that $k_i$ divides the orders of all cycles which belong to $S_i$; that is, such that there exist integers $\dt\geq 1$ satisfying $k_i \dt = |V(\tau)|$ for all cycles $\tau$ in $S_i$; and
\item for each $i=1,\dots ,r$ and $\tau \in S_i$, with $\tau =(\alt + 1, \alt +2, \dots, \alt + |V(\tau)|)$, a unique $\ttau \in \{\alt  + 1, \alt +2, \dots, \alt + k_i)$
\end{enumerate}

such that:

\begin{enumerate}[label=\Alph*]

\item $\Gamma$ consists of $r$ components $\Gi, \dots, \Gr$ (not necessarily connected);
\item each $\Gii$ consists of $k_i$ connected components, $\Gii = \gi \ \dots \ \gki$, unique up to cyclic relabeling of the $\gj$;
\item we have $V(\gi) = \bigcup_{\tau\in S_i} \lbrace \ttau +l k_i: l= 0, \dots , (\dt-1) \rbrace$ (and $\gi$ is the unique admissible graph on that collection of indices); and
\item $\gj = \sigma^{(j-1)} \gi, \quad j=1, \dots, k_i$.
\end{enumerate}

\end{proposition}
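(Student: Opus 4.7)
The plan is to use that the defining relations of $\pfq$ are binomial with coefficients $\pm 1$ to turn the hypothesis $\chism\ne 0$ into the statement that $\sigma$ preserves $\Gamma$ as a graph (up to sign), then to extract the combinatorial data $(S, k_i, \dt, \ttau)$ by analyzing how $\sigma$ permutes the connected components of $\Gamma$ together with the consecutive-integers structure of the cycles of $\sigma$.

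First, since $\sigma(m)$ expands into a single monomial $\pm m'$ in $\mB$, the condition $\chism\ne 0$ forces $\sigma\Gamma = \pm\Gamma$ as basis graphs. Each connected component $\gamma$ of $\Gamma$ is an admissible chain, which by Lemma \ref{BasisPropPfB} is determined by its vertex set; since $\sigma\gamma$ is (up to sign, by Corollary \ref{pfqSignsCor}) the admissible chain on $\sigma V(\gamma)$, it must again be a connected component of $\Gamma$. Hence $\sigma$ acts by permutations on the set of connected chains of $\Gamma$. I would then partition these chains into $\sigma$-orbits and define $\Gamma_1,\dots,\Gamma_r$ to be the unions of chains in each orbit; each $\Gamma_i$ decomposes as $\gamma_1\sqcup\dots\sqcup\gamma_{k_i}$ with $\sigma\gamma_j = \gamma_{j+1\ (\bmod\ k_i)}$ after a choice of base (unique up to cyclic relabeling), giving (A), (B) and (D).

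Next, for each cycle $\tau$ of $\sigma$ meeting $V(\gamma_1)$, the $\tau$-orbit of any $v\in V(\gamma_1)\cap V(\tau)$ visits $V(\gamma_2),V(\gamma_3),\ldots$ cyclically and must close after $|V(\tau)|$ steps, forcing $k_i\mid |V(\tau)|$ with $\dt:=|V(\tau)|/k_i$ vertices of $\tau$ in each $V(\gamma_j)$, and $V(\tau)\subseteq V(\Gamma_i)$. Consequently $S_i:=\{\tau:V(\tau)\subseteq V(\Gamma_i)\}$ is a well-defined partition of the cycles of $\sigma$. Using the hypothesis $\tau=(\alt+1,\dots,\alt+|V(\tau)|)$, the action $\sigma|_{V(\tau)}$ is the cyclic shift $\alt+s\mapsto \alt+s+1$, so $V(\gamma_{j+1})\cap V(\tau)=\sigma(V(\gamma_j)\cap V(\tau))$ is obtained by shifting indices by $1$. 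Iterating, $V(\gamma_1)\cap V(\tau)$ is an arithmetic progression $\{\ttau,\ \ttau+k_i,\ \dots,\ \ttau+(\dt-1)k_i\}$ with a unique starting point $\ttau\in\{\alt+1,\dots,\alt+k_i\}$, which is (C).

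Uniqueness of the data then follows: $\Gamma_1,\dots,\Gamma_r$ are the $\sigma$-orbits of connected components; $k_i$ is the orbit length; $S_i$ is the set of cycles contained in $V(\Gamma_i)$; and once $\gamma_1$ is fixed within its cyclic ambiguity, each $\ttau$ and $\dt$ is forced by the analysis above. The main obstacle, I expect, is the bookkeeping around this cyclic ambiguity: a shift $\gamma_1\mapsto\gamma_{1+c}$ transforms $\ttau\mapsto\sigma^c(\ttau)$, which must be reduced modulo $|V(\tau)|$ to lie in the window $\{\alt+1,\dots,\alt+k_i\}$, and one must verify that this induces a consistent simultaneous reparametrization of all three pieces of data. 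This is also the natural stage at which the sign of $\chism$ (needed later in the proof of Theorem \ref{PfBTheorem}) is prepared, although the present proposition only records the combinatorial decomposition.
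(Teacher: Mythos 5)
Your proposal is correct and follows essentially the same route as the paper: deduce from $\chism\ne 0$ that $\sigma$ permutes the admissible chains of $\Gamma$, take $\sigma$-orbits of chains to obtain the $\Gii$ and $k_i$, use the cycle structure to force $k_i\mid|V(\tau)|$ and $V(\tau)\subseteq V(\Gii)$, and invoke the consecutive-integers hypothesis to identify $V(\gi)\cap V(\tau)$ as the arithmetic progression $\{\ttau,\ttau+k_i,\dots,\ttau+(\dt-1)k_i\}$. The cyclic-relabeling ambiguity you flag is genuine but is handled in the paper only at the subsequent counting stage (Proposition \ref{MonomialsFromDataProp} and the character computation), where the factor $k_i^{(|S_i|-1)}$ absorbs it.
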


\newpage

\begin{example}
\end{example}
We illustrate the above proposition by taking the case of

\begin{equation*}
\sigma=(1234)(5)(6789)(10\ 11)(12\ 13\ 14)
\end{equation*}

Suppose we partition the cycles in $\sigma$ into

\begin{equation*}
S=\{(1234),(10\ 11)\} \sqcup \{(5),(6789)\} \sqcup \{(12\ 13\ 14)\}
\end{equation*}

We take $k_1=2, k_2=1, k_3=3$ (noting that these do, indeed, divide the orders of the cycles in parts 1, 2 and 3, respectively, of $S$).  Finally we take $t_{(1234)}=1, t_{(10\ 11)}=11, t_{(5)}=5,t_{(6789)}=6$ and $t_{(12\ 13\ 14)}=13$.

Then the graph which is determined by these data is the following:

\[
\xy
(-10,0)*{} = "1";
(-10,5)*{} = "2";
(-10,10)*{} = "3";
(-11,0)*{\scriptstyle{1}};
(-11,5)*{\scriptstyle{3}};
(-12,10)*{\scriptstyle{11}};
(0,0)*{} = "4";
(0,5)*{} = "5";
(0,10)*{} = "6";
(-1,0)*{\scriptstyle{2}};
(-1,5)*{\scriptstyle{4}};
(-2,10)*{\scriptstyle{10}};
(10,0)*{} = "7";
(10,5)*{} = "8";
(10,10)*{} = "9";
(10,15)*{} = "12";
(10,20)*{} = "13";
(9,0)*{\scriptstyle{5}};
(9,5)*{\scriptstyle{6}};
(9,10)*{\scriptstyle{7}};
(9,15)*{\scriptstyle{8}};
(9,20)*{\scriptstyle{9}};
(18,0)*{\scriptstyle{\bullet}} = "10";
(26,0)*{\scriptstyle{\bullet}} = "11";
(34,0)*{\scriptstyle{\bullet}} = "111";
(16,0)*{\scriptstyle{13}};
(24,0)*{\scriptstyle{14}};
(32,0)*{\scriptstyle{12}};
(-5,-5)*{\underbrace{\gamma_1 \quad \gamma_2}};
(-5,-10)*{\Gamma_1};
(10,-5)*{\underbrace{\gamma_1}};
(10,-10)*{\Gamma_2};
(25,-5)*{\underbrace{\gamma_1 \quad \gamma_2\quad \gamma_3}};
(25,-10)*{\Gamma_3};
{\ar@{->} "1"; "2"};
{\ar@{->} "2"; "3"};
{\ar@{->} "4"; "5"};
{\ar@{->} "5"; "6"};
{\ar@{->} "7"; "8"};
{\ar@{->} "8"; "9"};
{\ar@{->} "9"; "12"};
{\ar@{->} "12"; "13"};
\endxy
\]

One sees in particular that the components within each $\Gii$ are cyclically permuted by $\sigma$ (with signs).  Specifically,

\[
\xy
(-25,5)*{\sigma(\Gamma_1) \quad =};
(-10,0)*{} = "1";
(-10,5)*{} = "2";
(-10,10)*{} = "3";
(-11,0)*{\scriptstyle{2}};
(-11,5)*{\scriptstyle{4}};
(-12,10)*{\scriptstyle{10}};
(0,0)*{} = "4";
(0,5)*{} = "5";
(0,10)*{} = "6";
(-1,0)*{\scriptstyle{3}};
(-1,5)*{\scriptstyle{1}};
(-2,10)*{\scriptstyle{11}};
{\ar@{->} "1"; "2"};
{\ar@{->} "2"; "3"};
{\ar@{->} "4"; "5"};
{\ar@{->} "5"; "6"};
(10,5)*{=\quad -};
(20,0)*{} = "7";
(20,5)*{} = "8";
(20,10)*{} = "9";
(19,0)*{\scriptstyle{2}};
(19,5)*{\scriptstyle{4}};
(18,10)*{\scriptstyle{10}};
(30,0)*{} = "10";
(30,5)*{} = "11";
(30,10)*{} = "12";
(29,0)*{\scriptstyle{1}};
(29,5)*{\scriptstyle{3}};
(28,10)*{\scriptstyle{11}};
{\ar@{->} "7"; "8"};
{\ar@{->} "8"; "9"};
{\ar@{->} "10"; "11"};
{\ar@{->} "11"; "12"};
(40,5)*{=};
(50,0)*{} = "13";
(50,5)*{} = "14";
(50,10)*{} = "15";
(49,0)*{\scriptstyle{1}};
(49,5)*{\scriptstyle{3}};
(48,10)*{\scriptstyle{11}};
(60,0)*{} = "16";
(60,5)*{} = "17";
(60,10)*{} = "18";
(59,0)*{\scriptstyle{2}};
(59,5)*{\scriptstyle{4}};
(58,10)*{\scriptstyle{10}};
{\ar@{->} "13"; "14"};
{\ar@{->} "14"; "15"};
{\ar@{->} "16"; "17"};
{\ar@{->} "17"; "18"};
\endxy
\]
where the signs are determined by interpreting the diagrams as wedge products (in the indicated order) of the monomials corresponding to their connected components.  Similarly one finds that $\sigma(\Gamma_2)=-\Gamma_2$, and

\[
\xy
(5,0)*{\sigma(\Gamma_3) =};
(18,0)*{\scriptstyle{\bullet}} = "10";
(26,0)*{\scriptstyle{\bullet}} = "11";
(34,0)*{\scriptstyle{\bullet}} = "111";
(16,0)*{\scriptstyle{14}};
(24,0)*{\scriptstyle{12}};
(32,0)*{\scriptstyle{13}};
(40,0)*{\ =\ \Gamma_3};
\endxy
\]

\begin{proof}[Proof of Proposition \ref{pfbCountingProp}]

Since $\sigma(\Gamma)=\pm \Gamma$, it must be possible to group the connected components of $\Gamma$ into collections $\Gi, \dots, \Gr$ such that the connected components within each $\Gii$ are cyclically permuted by $\sigma$.

For any one of these collections $\Gii$, let $\gi, \dots, \gki$ be the connected components of $\Gii$, labeled so that $\sigma(\gi)=\gamma_2, \dots, \sigma(\gki)=\gi$.  Obviously, the numbering of the $k_i$ connected components is determined precisely up to cyclic relabeling of the $\gj$.

Let $\tau$ be a cycle in the cycle decomposition of $\sigma$ such that $V(\tau)\cap V(\Gii)\ne \emptyset$.  Then we must have
\begin{multline*}
\tau\big(V(\tau)\cap V(\gi)\big)=V(\tau)\cap V(\gamma_2), \\
\dots, \\
\tau\big(V(\tau)\cap V(\gki)\big)=V(\tau)\cap V(\gi)
\end{multline*}

Hence $V(\tau) \subseteq V(\Gii)$.  Moreover, the $V(\tau)\cap V(\gj)$ must all have the same size, which we call $\dt$. Thus

\begin{equation*}
|V(\tau)|=k_i \dt
\end{equation*}
and in particular $k_i$ divides $|V(\tau)|$ for every cycle $\tau$ in $\sigma$ such that $V(\tau)\cap V(\Gii)\ne \emptyset$.

Let $\ttau$ be the smallest element of $V(\tau)\cap V(\gi)$.  Then, recalling that we have assumed that $\tau$ (may be presented so that it) is a list of consecutive integers, we must have

\begin{equation*}
\ttau + l \in V(\tau)\cap V(\gamma_{l+1}), \quad \forall\ l=0, \dots, (k_i-1)
\end{equation*}
and then $\ttau + k_i \in V(\gi)$.  Similarly, we find that
\begin{equation*}
\ttau + k_i +l \in V(\tau)\cap V(\gamma_{l+1}), \quad \forall\ l=0, \dots, (k_i-1)
\end{equation*}
and then $\ttau + 2k_i \in V(\gi)$.  Continuing in this way, we conclude that
\begin{equation}
\label{Vtau}
V(\tau)\cap V(\gi) = \{\ttau + sk_i,\ s=0, \dots, \dt-1\}
\end{equation}
and
\begin{equation*}
V(\tau)\cap V(\gj) = \sigma^{(j-1)} \big( V(\tau)\cap V(\gi) \big), \quad j=1, \dots, k_i
\end{equation*}

Thus we see that

\begin{equation*}
V(\gi) = \bigcup_{\tau\in S_i} \lbrace \ttau +s k_i: s= 0, \dots , (\dt-1) \rbrace
\end{equation*}
(and since $\Gamma$ is a basis element, $\gi$ must be the unique admissible graph on that set).  Furthermore,

\begin{equation*}
\gj = \sigma^{(j-1)} \gi \quad j=1, \dots, k_i
\end{equation*}

Moreover, since $\ttau$ is the smallest element of $V(\tau)\cap V(\gi)$, and $V(\tau)=\{\alt+1, \dots, \alt +k_i \dt\}$, for some natural number $\alt$, we conclude from (\ref{Vtau}) that $\ttau \in \{\alt+1, \dots, \alt + k_i \}$.

As noted previously, if $\tau$ is a cycle in $\sigma$ such that $V(\tau)\cap V(\Gii) \ne \emptyset$, then in fact $V(\tau) \subseteq V(\Gii)$. Thus each collection $\Gii$ determines a unique subset of the cycles in $\sigma$, and so the $\{\Gii\}_{i=1,\dots,r}$ determine a unique partition $S=S_1 \sqcup \dots \sqcup S_r$ of the cycles in $\sigma$.

We have thus seen how each characteristic monomial determines uniquely the data described in the proposition, as required.
\end{proof}

\newpage

\begin{proposition}
\label{MonomialsFromDataProp}
For any $\sigma \in S_n$, each possible choice of data as per 1-3 of Proposition \ref{pfbCountingProp}, that is:
\begin{itemize}
\item an unordered partition $S=S_1 \sqcup \dots \sqcup S_r$ of the cycles in the cycle decomposition of $\sigma$;
\item integers $k_i,\ i=1,\dots ,r$, such that $k_i$ divides the orders of all cycles which belong to $S_i$; that is, such that there exist $\dt\geq 1$ satisfying $k_i \dt = |V(\tau)|$ for all cycles $\tau$ in $S_i$; and
\item for each $i=1,\dots ,r$ and $\tau \in S_i$, with $\tau =(\alt + 1, \alt +2, \dots, \alt + |V(\tau)|)$, some $\ttau \in \{\alt  + 1, \alt +2, \dots, \alt + k_i)$
\end{itemize}
gives rise to a unique characteristic monomial of the form described in A-D of that proposition, and these are all distinct (up to cyclic relabelings of the $\gj$ within each $\Gii, i=1, \dots, r$).
\end{proposition}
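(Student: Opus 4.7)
The plan is to establish this as a converse to Proposition \ref{pfbCountingProp}: given the data, we construct a candidate $\Gamma$ according to conditions A--D of that proposition, verify it defines a legitimate basis element which satisfies $\chi_\sigma(\Gamma)\neq 0$, and then appeal to Proposition \ref{pfbCountingProp} itself to conclude that distinct data produce distinct monomials (modulo the stated cyclic relabeling).

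First, given the data $(S, \{k_i\}, \{\ttau\})$, define for each $i=1,\dots,r$ the set $V(\gi) := \bigcup_{\tau\in S_i} \lbrace \ttau + l k_i : l= 0, \dots, \dt-1 \rbrace$ (this is well-defined because $k_i$ divides each $|V(\tau)|$ for $\tau\in S_i$); take $\gi$ to be the unique admissible chain on $V(\gi)$, set $\gj := \sigma^{(j-1)} \gi$ for $j=2,\dots,k_i$, and define $\Gii := \gi \cdots \gki$ and $\Gamma := \Gi \cdots \Gr$. I would then check that the supports of the connected components are pairwise disjoint: within a fixed $\Gii$, the sets $V(\gj)$ for $j=1,\dots,k_i$ partition $\bigcup_{\tau\in S_i} V(\tau)$ (for each cycle $\tau=(\alt+1,\dots,\alt+k_i\dt)$ in $S_i$, the indices $\ttau+l k_i$ together with their $\sigma$-translates distribute one index into each of the $k_i$ residue classes mod $k_i$, and $\sigma^{k_i}$ acts on $V(\tau)$ as a shift by $k_i$ of order $\dt$); and different $\Gii$ have disjoint supports because $V(\Gii) \subseteq \bigcup_{\tau\in S_i} V(\tau)$ and the $S_i$ partition the cycles of $\sigma$. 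By Lemma \ref{BasisPropPfB}, $\Gamma$ then determines a unique basis element in $\mB$ (with $\gj$ for $j\geq 2$ being interpreted via the relations \eqref{pfbRelations} as $\pm$ the admissible chain on $V(\gj)$, in the spirit of Corollary \ref{pfqSignsCor}).

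Second, I would verify $\chi_\sigma(\Gamma) \ne 0$ by checking $\sigma\Gamma = \pm \Gamma$ at the level of the basis. By construction, $\sigma(\gj) = \gamma_{j+1}$ (indices taken cyclically mod $k_i$) within each $\Gii$, and $\sigma$ preserves each $\Gii$ setwise; hence $\sigma\Gamma$ has the same multiset of connected components as $\Gamma$, merely permuted. Since $\pfq$ is an exterior algebra, permuting components with disjoint supports only introduces an overall sign, so $\sigma\Gamma = \pm\Gamma$ as required.

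Finally, for the uniqueness up to cyclic relabeling, I would invoke Proposition \ref{pfbCountingProp} directly: applied to the $\Gamma$ just constructed, it recovers the partition $S$ (from how $V(\tau)$ distributes among the $\Gii$), the integers $k_i$ (as the number of connected components in $\Gii$), and the $\ttau$ (as the minimum of $V(\tau)\cap V(\gi)$); the only ambiguity in the reconstruction is the cyclic labeling of $\gi,\dots,\gki$ within each $\Gii$, matching the relabeling clause in the statement. The main obstacle I expect is the first step: carefully confirming that the prescribed supports are indeed disjoint and that the $\gj$ for $j\geq 2$ are interpreted consistently as basis elements via the relations of $\pfq$; once this bookkeeping is in place, the rest follows formally from the structure already established in Proposition \ref{pfbCountingProp}.
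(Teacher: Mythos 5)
Your proposal is correct and follows essentially the same route as the paper's (quite terse) proof: build $\Gamma$ from the data as in A--D, observe that $\sigma$ permutes the connected components within each $\Gii$ so that $\sigma\Gamma=\pm\Gamma$, and deduce distinctness from the fact that a basis element is determined by its induced partition of $[n]$ (Lemma \ref{BasisPropPfB}). You supply more detail than the paper on the disjointness of the supports, and you phrase distinctness as recovering the data via Proposition \ref{pfbCountingProp} rather than comparing induced partitions directly, but these amount to the same argument.
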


\begin{proof}

It is fairly clear that given the data 1-3 we can form a graph $\Gamma$ as per A-D of Proposition \ref{pfbCountingProp}, which is unique up to cyclic relabelings of the $\gj, j=1, \dots, k_i$ within each $\Gamma_i$.  Moreover, these are distinct.  Indeed the vertex sets $V(\gamma)$ of the connected components $\gamma$ of $\Gamma$ determine a partition of $[n]$, and different $\Gamma$ determine different partitions.  The claim then follows because Lemma \ref{BasisPropPfB} implies that each partition determines a unique and distinct basis element.

By construction, $\sigma$ just permutes the $\gj, j=1, \dots, k_i$, within each $\Gamma_i$, so that $\sigma(\Gamma)=\pm \Gamma$ and $\Gamma$ is a characteristic monomial for $\sigma$.

\end{proof}

\begin{proposition}
For any $\sigma \in S_n$, and for each possible choice of data as per Proposition \ref{pfbCountingProp}, each component $\Gii$ of the resulting graph satisfies

\begin{equation*}
\sigma(\Gii)=(-1)^{(k_i-1)(\sum_{\tau} \dt-1) + \sum_{\tau} (\dt-1)} \ \Gii
\end{equation*}
and has degree $k_i(\sum_{\tau} \dt-1)$.  Furthermore, there are exactly $k_i^{(|S_i|-1)}$ choices of the $\ttau \in \{\alt  + 1, \alt +2, \dots, \alt + k_i \}$, after factoring out cyclic relabelings of the $\gj$.

Hence the space of characteristic monomials corresponding to any particular partition $S$ of the cycles in the cycle decomposition of $\sigma$ has character

\begin{equation*}
\chi_{\scriptstyle{S}} = \prod_{S_i\in S} \sum_{k_i} (-1)^{(k_i-1)(\sum_{\tau} \dt-1) + \sum_{\tau} (\dt-1)} k_i^{(|S_i|-1)} z^{k_i(\sum_{\tau} \dt-1)}
\end{equation*}
where the sum is over all $k_i$ dividing $|V(\tau)|$ for each cycle $\tau$ in $\sigma$ such that $V(\tau)\cap V(\Gamma_i)\ne \emptyset$.
\end{proposition}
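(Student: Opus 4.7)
The plan is to verify three assertions---the degree, the sign, and the count $k_i^{|S_i|-1}$---and then assemble them into the character formula using that distinct components $\Gii$ of $\Gamma$ have disjoint supports, so their signs multiply and their degrees add. Throughout, write $d:=\sum_{\tau\in S_i}\dt$ for brevity.

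First I would dispatch the two easier pieces. For the degree: by Proposition \ref{pfbCountingProp}(C) each $\gj$ is the admissible chain on a $d$-element vertex set, hence has degree $d-1$ by Lemma \ref{BasisPropPfB}, and since $\Gii$ is the wedge of its $k_i$ connected components it has degree $k_i(d-1)$. For the count: the tuple $(\ttau)_{\tau\in S_i}$ determines $\gi$ (via the explicit vertex description in \ref{pfbCountingProp}(C)) and hence $\Gii$; cyclic relabeling $\gj\mapsto\gamma_{j+1}$ amounts to replacing $\gi$ by $\sigma(\gi)$, which shifts each $\ttau$ by $+1$ modulo $k_i$ within its window $\{\alt+1,\ldots,\alt+k_i\}$. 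This diagonal $\Z/k_i$-action on $\prod_\tau\{\alt+1,\ldots,\alt+k_i\}$ is free, so the number of cyclic equivalence classes is $k_i^{|S_i|}/k_i=k_i^{|S_i|-1}$.

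The sign is the main obstacle. Writing $\mu_j$ for the admissible monomial underlying $\gj$, define $\epsilon_j\in\{\pm 1\}$ by $\sigma(\mu_j)=\epsilon_j\mu_{j+1}$ (indices mod $k_i$). Then
\[
\sigma(\Gii)=\sigma(\mu_1)\w\cdots\w\sigma(\mu_{k_i})=\Bigl(\prod_{j=1}^{k_i}\epsilon_j\Bigr)\mu_2\w\cdots\w\mu_{k_i}\w\mu_1,
\]
and the cyclic re-ordering back to $\mu_1\w\cdots\w\mu_{k_i}$ introduces a further factor $(-1)^{(d-1)(k_i-1)(d-1)}=(-1)^{(k_i-1)(d-1)}$, using $(d-1)^2\equiv d-1\pmod 2$. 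To evaluate $\prod_j\epsilon_j$, iterate the defining relation to obtain $\sigma^{k_i}(\mu_1)=(\prod_j\epsilon_j)\mu_1$. The key observation is that $\sigma^{k_i}|_{V(\gi)}$ is a permutation of $V(\gi)$ that restricts, on each subset $V(\tau)\cap V(\gi)=\{\ttau+sk_i:s=0,\ldots,\dt-1\}$, to the $\dt$-cycle $\ttau\mapsto\ttau+k_i\mapsto\cdots\mapsto\ttau+(\dt-1)k_i\mapsto\ttau$; this is because $\tau$ is a single $k_i\dt$-cycle on $V(\tau)$ (so $\tau^{k_i}$ is a product of $k_i$ disjoint $\dt$-cycles, one per $V(\gj)\cap V(\tau)$), while cycles $\tau'\ne\tau$ do not touch these vertices. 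Hence $\sigma^{k_i}|_{V(\gi)}$ has sign $\prod_\tau(-1)^{\dt-1}$, and Corollary \ref{pfqSignsCor} (applied with $T=V(\gi)$) gives $\prod_j\epsilon_j=(-1)^{\sum_\tau(\dt-1)}$. Combining the two contributions yields precisely the stated exponent $(k_i-1)(d-1)+\sum_\tau(\dt-1)$.

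For the character formula, Proposition \ref{MonomialsFromDataProp} parametrizes the characteristic monomials with associated partition $S$ by the data $\{(k_i,(\ttau)_{\tau\in S_i})\}_{i=1}^r$, modulo cyclic relabeling within each $\Gii$. Because these data decouple across the components $\Gi,\ldots,\Gr$ of $\Gamma$, and because both $\chi_\sigma(\cdot)$ and $z^{\deg(\cdot)}$ are multiplicative over components on disjoint vertex sets, the sum $\chi_S=\sum_\Gamma\chi_\sigma(\Gamma)z^{\deg\Gamma}$ factors as a product over $i$. Each factor is then the sum over admissible divisors $k_i$ of $\gcd_{\tau\in S_i}|V(\tau)|$, with each term weighted by the $k_i^{|S_i|-1}$ cyclic equivalence classes of $(\ttau)$'s, giving exactly the expression in the statement.
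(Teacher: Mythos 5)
Your proposal is correct, and its overall architecture (degree, then count, then sign, then multiplicativity over the $\Gii$) matches the paper's. The one place where you take a genuinely different route is the sign, which is the heart of the proposition. The paper computes the ``local'' signs directly: it observes that $\sigma$ carries each chain $\gj$ to an admissible chain except on the component containing the largest element $\alt+k_i\dt$ of each cycle $\tau$, and then pays a sign $(-1)^{\dt-1}$ per $\tau$ to reorder that one segment (via Corollary \ref{pfqSignsCor}), before collecting the cyclic-reordering sign $(-1)^{(k_i-1)(d-1)}$. You instead avoid computing any individual $\epsilon_j$ and extract only the product $\prod_j\epsilon_j$ from the identity $\sigma^{k_i}(\mu_1)=\bigl(\prod_j\epsilon_j\bigr)\mu_1$ together with the observation that $\sigma^{k_i}$ restricts to a permutation of $V(\gi)$ that is a product of one $\dt$-cycle per $\tau\in S_i$, whence Corollary \ref{pfqSignsCor} gives $\prod_j\epsilon_j=(-1)^{\sum_\tau(\dt-1)}$. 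This is slicker and less bookkeeping-prone than the paper's segment analysis, at the cost of being slightly less explicit about where the signs actually arise. Your counting argument is also a touch more careful than the paper's: you justify $k_i^{|S_i|-1}$ by noting that cyclic relabeling acts as a free diagonal $\Z/k_i$-shift on the tuples $(\ttau)$, where the paper simply divides by $k_i$ without checking freeness. Both treatments assert rather than prove the multiplicativity of $\chi_\sigma$ over the disjointly supported $\Gii$ (which does require the small observation that the reordering permutation of the connected components has no inversions between distinct blocks), so you are at the same level of rigor as the paper there.
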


\begin{proof}

We first determine the degree of the monomials corresponding to each $\Gii,\ i=1, \dots, r$.  Recall that for each connected component $\gj$ of $\Gii$, the set $V(\gj)\cap V(\tau)$ has $\dt$ elements.  Hence $V(\gj)$ has $\sum_{\tau}\dt$ elements (the sum being over all $\tau$ such that $V(\gj)\cap V(\tau)\ne \emptyset$) and the degree of the monomial corresponding to $\gj$ is $\sum_{\tau}\dt-1$.  Hence the degree of the monomial corresponding to $\Gii$ is $k_i(\sum_{\tau}\dt-1)$.

Let us now consider any particular $\Gamma_i$.  Note that if $\sigma$ is increasing on (the vertex set of) some admissible chain $\gamma$, then $\sigma\gamma$ remains an admissible graph.  Also, for any cycle $\tau$ in $\sigma$ such that $V(\tau)\subseteq V(\Gii)$, $\tau$ is increasing on each set $V(\gj)\cap V(\tau)$, except for the $\gj$ which contains the biggest element of $\tau$, namely $\alt + k_i \dt$.  In fact $\tau$ is increasing even on this $\gj$, except at $\alt + k_i \dt$, which $\tau$ maps to $\alt + 1$.

So we only need to apply relations to bring this particular $\sigma\gj$ into admissible form.  We recall that the indices in each $\tau$ are consecutive, in the sense that $\tau = (\alt+1, \dots, \alt + |V(\tau)|)$, for some $\alt$;  this implies that $\gj$ consists of subchains, each corresponding to a different $\tau$, with directed edges joining the subchains into a single ascending chain of indices (that is, the chain will have ascending indices as we follow the arrows along the chain).  Since $\gj$ has ascending indices, and $\sigma$ does not `mix' subchains corresponding to different $\tau$, one need only reorder the indices within each subchain of $\sigma\gj$ (so that they are strictly increasing), and then the resulting graph will automatically also have ascending indices. Thus it suffices to determine the required sign to reorder each segment, and then collect the signs.  We illustrate this process in the following picture, for some segment corresponding to a particular $\tau$, showing only the part of $\gj$ that involves indices from $\tau$.  For simplicity of notation we have assumed that $\alt=0$:

\[
\xy
(-5,0)*{\sigma\quad } = "1";
(-4,-10)*{} = "2";
(-4,-5)*{} = "3";
(-4,0)*{} = "4";
(-4,5)*{} = "5";
(-4,10)*{} = "6";
(-4,-12)*{\scriptstyle{k_i}};
(-4,12)*{\scriptstyle{k_i\dt}};
{\ar@{->} "2"; "3"};
{\ar@{->} "3"; "4"};
{\ar@{.} "4"; "5"};
{\ar@{->} "5"; "6"};
(2,0)*{=};
(8,-10)*{} = "12";
(8,-5)*{} = "13";
(8,0)*{} = "14";
(8,5)*{} = "15";
(8,10)*{} = "16";
(8,-12)*{\scriptstyle{k_i+1}};
(8,12)*{\scriptstyle{1}};
{\ar@{->} "12"; "13"};
{\ar@{->} "13"; "14"};
{\ar@{.} "14"; "15"};
{\ar@{->} "15"; "16"};
(13,0)*{=};
(25,0)*{(-1)^{(\dt-1)}};
(35,-10)*{} = "22";
(35,-5)*{} = "23";
(35,0)*{} = "24";
(35,5)*{} = "25";
(35,10)*{} = "26";
(35,-12)*{\scriptstyle{1}};
(35,12)*{\scriptstyle{1+k_i(\dt-1)}};
{\ar@{->} "22"; "23"};
{\ar@{->} "23"; "24"};
{\ar@{.} "24"; "25"};
{\ar@{->} "25"; "26"};
\endxy
\]
(for the sign, see Corollary \ref{pfqSignsCor}.)

Hence in respect of each $\tau$ we get a sign $(-1)^{(\dt -1)}$, and so for $\gj$ (and hence also for $\Gii$) we get a sign $(-1)^{\sum_{\tau} (\dt -1)}$.

Next, recall that, by construction, $\sigma\Gamma_i=\pm \Gamma_i$.  To determine what the sign is, note that

\begin{align*}
\sigma(\gi \dots \gki) &= (-1)^{\sum_{\tau} (\dt-1)} \gamma_2 \dots \gki \gi \\
&= (-1)^{\sum_{\tau} (\dt-1)} (-1)^{(k_i-1)|\gi|^2} \gi \dots \gki \\
&= (-1)^{\sum_{\tau} (\dt-1)} (-1)^{(k_i-1)|\gi|} \gi \dots \gki \\
&= (-1)^{\sum_{\tau} (\dt-1) + (k_i-1)(\sum_{\tau}\dt-1)} \Gii
\end{align*}

Finally, after these steps $\sigma(\Gii)$ has been brought into basis form, that is, a product of admissible graphs with disjoint support and ordered by increasing roots, at the cost of the above signs.

For each part $S_i$ of the partition $S$, and for each $k_i$ dividing the orders of all the cycles $\tau$ such that $V(\tau)\subseteq V(\Gii)$, there are $k_i^{|S_i|}$ ways to pick the $\{\ttau\}$, but because the $\gj$ may be cyclically relabeled without changing the graph, we really only have $k_i^{(|S_i|-1)}$ possible graphs.  Hence the character for the space of characteristic monomials corresponding to the part $S_i$ is

\begin{equation*}
\chi_{\scriptstyle{S_i}} = \sum_{k_i} (-1)^{(k_i-1)(\sum_{\tau} \dt -1) + \sum_{\tau} (\dt -1)} k_i^{(|S_i|-1)} z^{k_i (\sum_{\tau} \dt -1)}
\end{equation*}
where, as usual, the sum is over $k_i$ dividing the orders of all the cycles $\tau$ such that $V(\tau)\subseteq V(\Gii)$; and, for each $k_i$ and each such $\tau$, $|V(\tau)|=k_i\dt$.

We note that the space of characteristic monomials for $\sigma$ induced by the partition $S$ is clearly isomorphic to the tensor product of the spaces of characteristic monomials for each $S_i$, so that

\begin{equation*}
\chi_{\scriptstyle{S}}= \prod_i \chi_{\scriptstyle{S_i}}
\end{equation*}

\end{proof}

Finally, the space of characteristic monomials for $\sigma$ is just the direct sum of the spaces of characteristic monomials induced by the various partitions $S$, so that

\begin{equation*}
\pfqsz= \sum_S \chi_{\scriptstyle{S}}
\end{equation*}

\end{proof}

\section{A Koszul Formula for Graded Characters}

In this section we will state and prove a generalization of the well-known Koszul formula which applies to quadratic algebras which have the `Koszul' property.  We briefly recall the necessary concepts.  Our presentation follows \cite{PP}, to which the reader may refer for further information.

We assume given a quadratic algebra $A$ defined, as in Subsection \ref{CohomAlgSubsection}, by $A:= TV / \langle R\rangle$.  For each $n=2,3,\dots$, and for $1\leq i\leq n-1$, define $\Xni:=V^{\otimes i-1}\otimes R\otimes V^{\otimes n-1-i}$.

One can define a graded complex, known as the Koszul complex, whose degree $n$ component is the following:

\begin{multline}
\label{KoszulComplex}
0 \lra \Xnio \cap \dots \cap \Xnin \overset{d_1}{\lra} \Xnit \cap \dots \cap \Xnin \overset{d_2}{\lra} \frac{X^n_3 \cap \dots \cap \Xnin}{\Xnio} \overset{d_3}{\lra} \dots \notag \\
\dots \overset{d_{i-1}}{\lra} \frac{\Xni \cap \dots \cap \Xnin}{\Xnio + \dots + X^n_{i-2}} \overset{d_i}{\lra} \dots \\
\overset{d_{n-2}}{\lra} \frac{\Xnin}{\Xnio + \dots + X^n_{n-3}} \overset{d_{n-1}}{\lra} \frac{V^{\otimes n}}{\Xnio + \dots + X^n_{n-2}} \overset{d_{n}}{\lra} \frac{V^{\otimes n}}{\Xnio + \dots + \Xnin} \lra 0 \notag
\end{multline}
where we write $U/V$ for $U/(U\cap V)$.

The map $d_i$ is the composition of the obvious inclusion and projection:

\begin{equation*}
\frac{\Xni \cap \dots \cap \Xnin}{\Xnio + \dots + X^n_{i-2}} \hra \frac{\Xnii \cap \dots \cap \Xnin}{\Xnio + \dots + X^n_{i-2}} \thra \frac{\Xnii \cap \dots \cap \Xnin}{\Xnio + \dots + X^n_{i-1}}
\end{equation*}
With these $d_i$ it is easy to check that the previous sequence is a complex, for each $n$.

The algebra $A$ is said to be Koszul when the Koszul complex is exact for all $n\geq 2$.  For $A$ Koszul, one can show\footnote{See, for instance, \cite{PP}, Cor. 2.2.2.} that

\begin{equation}
\label{KoszulFormula}
A(z) \Aq(-z) = 1
\end{equation}
where $A(z)$ is the Hilbert series encoding the dimensions of the graded components of $A$ (and similarly for $\Aq(z)$).

The Koszul formula (\ref{KoszulFormula}) has the following generalization:

\begin{theorem}
\label{thm:ExtendedKoszulFormulaThm}
Let $G$ be a finite group, let $V$ be a finite-dimensional representation of $G$, and let $G$ act diagonally on the (rational) tensor algebra $TV$.  Let $R\subseteq V \otimes V$ be a submodule and suppose $A:=TV / \langle R\rangle$ is a Koszul algebra.

Then $A$ is a graded representation of $G$ whose character satisfies the `Koszul' formula

\begin{equation}
\label{ExtendedKoszulFormula}
\As(z) \Aqs(-z) = 1
\end{equation}
where $\As(z)$ is the (graded) character of the representation $A$ evaluated at the element $\sigma\in G$ (and similarly for $\Aqs(z)$).
\end{theorem}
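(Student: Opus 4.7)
The plan is to promote the classical derivation of the Koszul formula $A(z) A^!(-z) = 1$ from exactness of the Koszul complex to a $G$-equivariant statement. The first step is to observe that, since $R$ is a $G$-submodule of $V \otimes V$, each $X^n_i = V^{\otimes i-1} \otimes R \otimes V^{\otimes n-1-i}$ is $G$-invariant, and hence so are all the intersections, sums, and subquotients appearing in the Koszul complex. The differentials $d_j$, being inclusions followed by projections of $G$-invariant subspaces, are automatically $G$-equivariant, so the degree-$n$ Koszul complex is a complex of $G$-representations and $G$-equivariant maps. Similarly, the $G$-action on $V^*$ preserves $R^\perp$, so $A^!$ is a graded $G$-representation.

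Since $A$ is Koszul, the complex in each internal degree $n \geq 2$ is exact, and its $G$-equivariant Euler characteristic vanishes: for every $\sigma \in G$,
\begin{equation*}
\sum_{i=0}^{n} (-1)^i \chi_{T^n_i}(\sigma) = 0,
\end{equation*}
where $T^n_i$ is the $i$-th term of the complex (with $T^n_0$ leftmost). The key intermediate step is the $G$-equivariant identification $T^n_i \cong A_i \otimes (A^!_{n-i})^*$. This is a Künneth-style computation: using $V^{\otimes n} = V^{\otimes i} \otimes V^{\otimes n-i}$, the ``left'' relations factor as $X^n_1 + \dots + X^n_{i-1} = (X^i_1 + \dots + X^i_{i-1}) \otimes V^{\otimes n-i}$, the ``right'' intersections factor as $X^n_{i+1} \cap \dots \cap X^n_{n-1} = V^{\otimes i} \otimes (X^{n-i}_1 \cap \dots \cap X^{n-i}_{n-i-1})$, and the elementary identity $(W \otimes U) \cap (S \otimes W') = S \otimes U$ (for $S \subseteq W$, $U \subseteq W'$) collapses the quotient to $A_i \otimes (X^{n-i}_1 \cap \dots \cap X^{n-i}_{n-i-1})$. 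The standard duality $X^m_1 \cap \dots \cap X^m_{m-1} \cong (A^!_m)^*$ (which follows from the fact that the annihilator of $X^m_j$ in $(V^*)^{\otimes m}$ is exactly $(V^*)^{\otimes j-1} \otimes R^\perp \otimes (V^*)^{\otimes m-1-j}$) then gives the claimed identification.

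Taking characters and using the fact that characters of $\Q$-representations are rational, so $\chi_{V^*}(\sigma) = \chi_V(\sigma)$ for any finite-dimensional $G$-representation $V$, the vanishing Euler characteristic becomes
\begin{equation*}
\sum_{i=0}^{n} (-1)^i \chi_{A_i}(\sigma) \cdot \chi_{A^!_{n-i}}(\sigma) = 0 \quad \text{for } n \geq 2.
\end{equation*}
The cases $n=0$ (both $A_0$ and $A^!_0$ are trivial) and $n=1$ ($A_1 = V$, $A^!_1 = V^*$, equal rational characters) are verified directly. Interpreting these identities as the vanishing of the coefficient of $z^n$ in the formal power series $A_\sigma(z) A^!_\sigma(-z)$ (for $n \geq 1$), and noting that the constant term equals $\chi_{A_0}(\sigma)\chi_{A^!_0}(\sigma)=1$, gives $A_\sigma(z) A^!_\sigma(-z) = 1$. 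The main obstacle is the Künneth-type identification $T^n_i \cong A_i \otimes (A^!_{n-i})^*$; once this is verified in the $G$-equivariant category (where equivariance is automatic, since both sides inherit the diagonal action from $V$), the remainder is straightforward bookkeeping with generating functions.
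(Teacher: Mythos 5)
Your proposal is correct and follows essentially the same route as the paper: both arguments treat the degree-$n$ Koszul complex as an exact complex of $G$-modules, identify its terms with $A_i \otimes (A^!_{n-i})^*$, and extract the identity $\sum_i (-1)^i A^i_\sigma A^{!\,n-i}_\sigma = 0$ as the vanishing of the equivariant Euler characteristic (the paper phrases this via a Maschke-theorem splitting and telescoping, which is equivalent). If anything you are slightly more careful than the paper on the appearance of the dual $(A^!_{n-i})^*$ and the fact that $\chi_{W^*}(\sigma)=\chi_W(\sigma)$ for rational representations, a point the paper passes over silently.
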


The usual Koszul formula (\ref{KoszulFormula}) is just the case where $\sigma=1$, the identity of $G$.

\begin{proof}
Since $R$, and hence $\langle R\rangle$, is a submodule of $TV$, the fact that $A$ is a $G$-module is clear.  Moreover, since $V$ and $R$ are $G$-modules, so are the $\Xni$, as well as their various intersections, sums and quotients, such as:

\begin{align*}
\Xni \cap \dots \cap \Xnin \\
\Xnio + \dots + X^n_{i-2} \\
E_i:=\frac{\Xni \cap \dots \cap \Xnin}{\Xnio + \dots + X^n_{i-2}}
\end{align*}

The kernel of $d_{i+1}$ is (by exactness of the Koszul complex) the subspace:

\begin{equation*}
F_i:= \frac{\Xni \cap \dots \cap \Xnin}{\Xnio + \dots + X^n_{i-1}} \subseteq E_{i+1}
\end{equation*}
where we again write $U/V$ for $U/(U\cap V)$.

By the discussion above, $F_i$ is in in fact a submodule of $E_{i+1}$.  Hence, by Maschke's theorem, there is a submodule $F_{i+1} \subseteq E_{i+1}$ such that:

\begin{equation*}
E_{i+1} / F_i \cong F_{i+1} \text{    and    } E_{i+1}=F_i\oplus F_{i+1}
\end{equation*}
as $G$-modules.

Hence the Koszul complex is isomorphic to the sequence of modules:

\begin{equation*}
0 \ra F_1 \overset{d_1}{\ra} F_1\oplus F_2 \overset{d_2}{\ra} F_2\oplus F_3 \overset{d_3}{\ra} \dots \ra F_{n-1} \oplus F_n \overset{d_n}{\ra} F_n \ra 0
\end{equation*}
with $d_{i+1}(F_i)=0$ and $d_{i+1}(F_{i+1})\cong F_{i+1}$.  If we write $\chi_i$ for the character of $F_i$ evaluated at $\sigma$, and $\chi_0=\chi_{n+1}=0$, it is trivial that

\begin{equation*}
\sum_{i=0}^{n} (-1)^i (\chi_{i} + \chi_{i+1}) =0
\end{equation*}

But, by the additivity of characters of direct sums of modules, $(\chi_{i} + \chi_{i+1})$ is the character of $E_{i+1}=F_i \oplus F_{i+1}$.  Moreover, one knows that

\begin{equation*}
E_{i+1}= A^{!n-i}\otimes A^i
\end{equation*}
(this can be seen by inspection, but see also \cite{PP}, Prop. 1.6.2 and Prop. 2.3.1).

Hence, by the multiplicativity of characters of tensor products of modules, $(\chi_{i} + \chi_{i+1}) = A_{\sigma}^{!n-i} A_{\sigma}^i$ ($A_{\sigma}^i$ is the character of the representation $A^i$ evaluated at $\sigma$, and similarly for $A_{\sigma}^{!(n-i)}$).  So we find that:

\begin{equation}
\label{KoszulAuxEquation}
\sum_{i=0}^{n} (-1)^i A_{\sigma}^{!n-i} A_{\sigma}^i =0
\end{equation}
for $n\geq 2$.  In fact, the same equation (\ref{KoszulAuxEquation}) clearly holds also for $n=0,1$ (the case $n=0$ corresponding to the trivial representation $A^0=A^{!0}=\Q$).  Since equation (\ref{KoszulAuxEquation}) is just the degree $n$ part of equation (\ref{ExtendedKoszulFormula}), the result follows.

\end{proof}


\begin{thebibliography}{9}


\bibitem[Arn]{Arnold}
Arnold, V.I.,: \emph{The cohomology ring of the colored braid group}, Math. Notes Acad. Sci. USSR, 5, 138-140 (1969)
%
\bibitem[d'A-G]{d'A-G}
G. d'Antonio and G. Gaiffi, \emph{Symmetric Group Actions on the Cohomology of Configurations in $\R^d$}, ar:Xiv:0909.4877.
%
\bibitem[Bard]{Bard}
Bardakov, V.: The virtual and universal braids, Fundamenta Mathematicae, 184, 1-18 (2004)
%
\bibitem[BND]{BND}
Bar-Natan, D., Dancso, Z.: Finite type invariants of w-knotted objects: from Alexander to Kashiwara and Vergne (in preparation), available at http://www.math.toronto.edu/$\sim$drorbn/papers/WKO/WKO.pdf
%
\bibitem[B-B-G]{B-B-G}
F. Bergeron, N. Bergeron and A.M. Garsia, \emph{Idempotents for the Free Lie Algebra and $q$-Enumeration}, in \emph{Invariant Theory and Tableaux} (Mineapolis, 1988) volume 19 of \emph{IMA Vol. Math. Appl.}, p. 166-190.  Springer, NY, 1990.
%
\bibitem[BEER]{BEER}
L. Bartholdi, B. Enriquez, P. Etingof and E. Rains \emph{Groups and Lie Algebras Corresponding to the Yang-Baxter Equations} arXiv:math/0509661v6 (2006).
%
\bibitem[B-L]{B-L}
J. Blair and G.I. Lehrer, \emph{Cohomology Actions and Centralisers in Unitary Reflection Groups}, Proc. London Math. Soc. (3) 83 (2001) 582-604.
%
\bibitem[B]{B}
K.S. Brown, \emph{Cohomology of Groups} (New York:  Springer-Verlag, 1982).
%
\bibitem[C-F]{C-F}
T. Church and B. Farb, \emph{Representation theory and homological stability}, arXiv:1008.1368 [math.RT].
%
\bibitem[Cohen]{Cohen}
F. Cohen, Thesis, Univesity of Chicago, 1972.
%
\bibitem[C]{C}
R. Cohen, Stability phenomena in the topology of moduli spaces, in \emph{Surveys in Differential geometry XIV}, ed. L. Ji, S. Wolpert and S.-T. Yau, International Press (2010), 23-56.
%
\bibitem[D-P-R]{D-P-R}
J.M. Douglass, G. Pfeiffer and G. Rohrle, Cohomology of Coxeter Arrangements and Solomon's Descent Algebra, ar:Xiv:1101.2075.
%
\bibitem[F-V]{F-V}
G. Felder and A.P. Veselov, Coxeter Group Actions on the Cohomology of the Complement of Hyperplanes and Special Involutions, ar:Xiv:math/0311190.
%
\bibitem[F-H]{F-H}
W. Fulton and J. Harris, \emph{Representation Theory} (New York:  Springer-Verlag, 1991).
%
\bibitem[GPV]{GPV}
Goussarov, M., Polyak, M., Viro, O.: Finite type invariants of classical and virtual knots, Topology, 39:5, 1045-1068 (2000).
%
\bibitem[Han]{Han}
P. Hanlon: A proof of a conjecture of Stanley concerning partitions of a set, \emph{Europ. J. Combin.}, 4:2, 137-141 (1983).
%
\bibitem[H]{H}
D.J. Hemmer: Stable decompositions for some symmetric group characters arising in braid group cohomology, \emph{Journal of Combinatorial Theory}, Series A (2010).
%
\bibitem[Hut]{Hutchings}
Hutchings, M.: Integration of singular braid invariants and graph cohomology, Trans. A.M.S., 350:5, 1791-1809 (1998).
%
\bibitem[J-K]{J-K}
G. James and A. Kerber: \emph{The representation theory of the symmetric group}, (Cambridge: Cambridge University Press, 2009).
%
\bibitem[J-Mc-M]{J-Mc-M}
C.A. Jensen, J. McCammond and J. Meier: The integral cohomology of the group of loops, 10 \emph{Geom. \& Topol.} (2006), 759-784.
%
\bibitem[Kau]{Kau}
L.H. Kauffman: A Survey of Virtual Knot Theory, Proceedings of Knots in Hellas '98, World Sci. Pub. 2000, 143–202.
%
\bibitem[Koh]{Kohno}
Kohno, T.: S\'erie de Poincar\'e-Koszul associ\'ee aux groupes de tresses pures, Invent. Math., 82:1, 57-75 (1985).
%
\bibitem[Lee]{Lee}
P. Lee, \emph{The Pure Virtual Braid Group is Quadratic}, Sel. Math. New Ser., 19 (2013), no. 2, 461-508; also arXiv:1110.2356.
%
\bibitem[L]{L}
G.I. Lehrer, \emph{On the Poincare Series Associated with Coxeter Group Actions on Complements of Hyperplanes}, J. Math. Soc. (2) 36 (1987), 275-294.
%
\bibitem[L-S]{L-S}
G.I. Lehrer and L. Solomon, \emph{On the Action of the Symmetric Group n the Cohomology of the Complement of its Reflecting Hyperplanes}, J. Alg. (104) 2 , p. 410-424 (1986).
%
\bibitem[M]{M}
I.G. MacDonald, \emph{Symmetric Functions and Hall Polynomials}, 2nd edition, Oxford Mathematical Monographs (New York: Oxford University Press, 1995).
%
\bibitem[MarMc]{MarMc}
Margalit, D., McCammond, J.: Geometric presentations for the pure braid group, J. Knot Theory Ramif., 18 1-20 (2009)
%
\bibitem[M-R-W]{M-R-W}
A. Mendes, J. Remmel and J. Wagner: A $\lambda$-ring Frobenius characteristic for $G\wr S_n$, Electr. J. of Comb., 11.1 (2004) R56 [electronic only].
%
\bibitem[O-S]{O-S}
 P. Orlik and L. Solomon, \emph{Combinatorics and Topology of Complements of Hyperplanes}, Invent. Math. 56:2 (1980), 167-189.
%
\bibitem[PP]{PP}
A. Polishchuk and L. Positselski, \emph{Quadratic Algebras} (AMS: Providence, 2005).
%
\bibitem[Quillen]{Quillen}
Quillen, D.G.: On the associated graded ring of a group ring, J. Algebra, 10, 411-418 (1968)
%
\bibitem[Sag]{Sag}
B. Sagan, \emph{The Symmetric Group}, 2nd. ed. (Springer: New York, 2001).
%
\bibitem[V]{V}
K. Vogtman, The cohomology of automorphism groups of free groups, in \emph{Proceedings of the Intenational Congree of Mathematicians, Madrid 2006, vol. II}, European Mathematical Society, Zurich (2006), 1101-1119.
%
\bibitem[W]{W}
J.C.H. Wilson, \emph{Representation stability for the cohomology of the pure string motion groups} Algebr.
Geom. Topol., 12 no. 2 (2012) 909-931.
%

\end{thebibliography}
\end{document}